\documentclass[11pt,a4paper]{article}

\usepackage{a4wide}

\usepackage{amsmath}
\usepackage{amssymb}
\usepackage{amsthm}
\usepackage[normalem]{ulem}
\usepackage{color}
\usepackage[left=2.0cm,top=3.0cm,bottom=3.5cm]{geometry}
\usepackage{graphicx}

\usepackage{appendix}

\numberwithin{equation}{section}

\begin{document}

\newtheorem{theorem}{Theorem}[section]
\newtheorem{lemma}[theorem]{Lemma}
\newtheorem{definition}[theorem]{Definition}
\newtheorem{proposition}[theorem]{Proposition}
\newtheorem{corollary}[theorem]{Corollary}
\newtheorem{example}[theorem]{Example}
\newtheorem{remark}[theorem]{Remark}
\newtheorem{conjecture}[theorem]{Conjecture}
\newtheorem*{assumption}{Assumption}

\title{A central limit type theorem
for Gaussian mixture approximations to the nonlinear filtering problem}
\author{Dan Crisan\thanks{Department of Mathematics, Imperial College London, London, SW7 2AZ, UK. Email: d.crisan@imperial.ac.uk},\and Kai Li\thanks{Department of Mathematics, Uppsala University, Box 480, Uppsala, 75106, Sweden. Email: kai.li@math.uu.se}}

\maketitle

\begin{abstract} 
Approximating the solution of the nonlinear filtering problem with Gaussian mixtures has been a very popular method since the 1970s. 
However, the vast majority of such approximations are introduced in an ad-hoc manner without theoretical grounding. This work is a continuation of \cite{Crisan and Li, Crisan and Li 2}, where we described a rigorous Gaussian mixture approximation to the solution of the filtering problem. We deduce here a refined estimate of the rate of convergence of the approximation. We do this by proving a central limit type theorem for the error process. We also find the optimal variances of the Gaussian measures are of order $1/\sqrt n$. This implies, in particular, that the mean square error of the approximation as defined in \cite{Crisan and Li, Crisan and Li 2} is of order $1/n$.
\end{abstract}

\section{Introduction}

The stochastic filtering problem deals with the estimation of an evolving dynamical system, called the \emph{signal}, based on \emph{partial observations} and a priori stochastic model. The signal is modelled by a stochastic process denoted by $X=\{X_t,\ t\geq0\}$, defined on a probability space $(\Omega,\mathcal{F},\mathbb{P})$. The signal process is not available to observe directly; instead, a partial observation is obtained and it is modelled by a process $Y=\{Y_t,\ t\geq0\}$.
The information available from the observation up to time $t$ is defined as the filtration $\mathcal{Y}=\{\mathcal{Y}_t,\ t\geq0\}$ generated by the observation process $Y$.
In this setting, we want to compute $\pi_t$ --- the conditional distribution of $X_t$ given $\mathcal{Y}_t$.

The description of a numerical approximation for $\pi_t$ should contain the following three parts: the class of approximations; the law of evolution of the approximation; and the method of measuring the approximating error. Gaussian mixtures approximations are numerical schemes that approximate  $\pi_t$ with random measures of the form
$$
\sum_ja_j(t)\Gamma_{v_j(t),\omega_j(t)},
$$
where $a_j(t)$ is the weight of the Gaussian (generalised) particle, $\Gamma_{v_j(t),\omega_j(t)}$ is the Gaussian measure with mean $v_j(t)$ and covariance matrix $\omega_j(t)$. The evolution of the weights, the mean and the covariance matrices satisfy certain stochastic differential equations which are numerically solvable. 

Studies of Gaussian mixtures approximations in the context of Bayesian estimation have been developing for nearly fifty years since 1970s (see, for example, \cite{Crisan and Li 2} for a survey of the existing work). However, not until recently can we see a theoretical analysis and $L^2$-convergence rate for such approximating system obtained by Crisan and Li (\cite{Crisan
and Li, Crisan and Li 2}). In addition to the $L^2$-convergence, it is also of great importance that one can recalibrate the error of the approximation and characterise its exact convergence rate, in other words, prove a central limit theorem type result of such approximation. 

Various other approximations to the nonlinear filtering problems have been shown to satisfy central limit type theorems. Del Moral, Guionnet, and Miclo (see \cite{Del Moral CLT}, \cite{Del Moral and Guionnet}, \cite{Del Moral and Miclo}) deduced central limit type results (CLT) for unweighted particle filters using the interacting particle systems. Crisan and Xiong (\cite{Crisan and Xiong}) proved a CLT result for the classical nonlinear filtering case and obtained the rate as $n^{(1-\alpha)/2}$ for any $\alpha>0$; and this result was later improved by Xiong and Zeng (\cite{Xiong and Zeng}) up to $n^{1/2}$. Similar CLT results were also obtained for the discrete time filtering framework by Chopin (\cite{Chopin}) and Kunsch (\cite{Kunsch}).

However, to the authors' knowledge, there has been no theoretical analysis of the convergence in distribution for the Gaussian mixture approximations to the filtering problem, and no corresponding central limit type result was proven for this type of approximations. The main purpose of this paper is to fill this gap and obtain a CLT result for such approximation.

\subsection{Contribution of the paper}
This paper is a continuation of the work done in \cite{Crisan and Li 2}. In particular, we deduce here a central limit theorem for the algorithm presented in \cite{Crisan and Li, Crisan and Li 2}. To be specific, let $\pi=\{\pi_t;t\geq0\}$ be the conditional distribution and $\pi^{n,\varepsilon}=\{\pi_t^{n,\varepsilon};t\geq0\}$ be the approximation of the conditional distribution constructed in \cite{Crisan and Li 2} (and in Section 3 in this paper) using mixtures of Gaussian measures, where $n$ is the number of Gaussian measures and $\varepsilon$ is a positive parameter measuring the amount of ``Gaussianity'' (see discussion after \eqref{eq.appro_with_variance} for details). We obtain a central limit type result and show that the recalibrated error converges in distribution to a unique measure-valued process as $n$ increases; in addition, we find the optimal value for $\varepsilon$.

To do this we introduce the following measure-valued processes $\bar U^{n,\varepsilon}=\{\bar U_t^{n,\varepsilon};t\geq0\}$ and $U^{n,\varepsilon}=\{U_t^{n,\varepsilon};t\geq0\}$ as
\begin{align}
\bar U_t^{n,\varepsilon}=n^\varepsilon(\pi_t^{n,\varepsilon}-\pi_t)\quad \text{and}\quad U_t^{n,\varepsilon}=n^\varepsilon(\rho_t^{n,\varepsilon}-\rho_t)\nonumber,
\end{align}
where $\rho$ ($\rho^{n,\varepsilon}$) is the unnormalised version of $\pi$ ($\pi^{n,\varepsilon}$) (see Section 3 for details). Then we have the following.
\begin{theorem}\label{thm.main_CLT}
The $L^2$-convergence rate of $\pi^{n,\varepsilon}\ (\rho^{n,\varepsilon})$ to $\pi\ (\rho)$ is ${\left(1\over n\right)}^{\min\{2\varepsilon,1\}}$ for $\varepsilon>0$. When $0<\varepsilon\leq1/2$, for each $\varepsilon$, there is a unique measure-valued process $U^\varepsilon=\{U_t^\varepsilon;t\geq0\}$ solving the following stochastic PDE, given any test function in $\varphi\in C_b^6({\mathbb R}^d)$:
$$
U_t^\varepsilon(\varphi)=U_0^\varepsilon(\varphi)+\int_0^tU_s^\varepsilon(A\varphi)ds+\int_0^tU_s^\varepsilon(h\varphi)dY_s+\Lambda_t^{\varphi,\varepsilon},
$$ 
where the definitions of the operator $A$, the function $h$ and $\Lambda^\varphi$ can be found in subsequent sections; and $U^{n,\varepsilon}$ forms a tight sequence and converges in distribution to the process $U^\varepsilon$. In addition, $\bar U^{n,\varepsilon}$ converges in distribution to a measure-valued process $\bar U^\varepsilon=\{\bar U_t^\varepsilon;t\geq0\}$, which is defined by
$$
\bar U_t^{\varepsilon}(\varphi)=\frac{1}{\rho_t(\mathbf 1)}\left(U_t^\varepsilon(\varphi)-\pi_t(\varphi)U_t^\varepsilon(\mathbf 1)\right).
$$
When $\varepsilon>1/2$, the process $\{U^{n,\varepsilon}\}_n\ (\{\bar U^{n,\varepsilon}\}_n)$ is divergent. In other words, the central limit theorem is obtained when $\varepsilon\in(0,1/2]$, and among this range $\varepsilon=1/2$ gives the optimal $L^2$-convergence rate.
\end{theorem}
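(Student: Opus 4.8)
The plan is to derive an evolution equation for the rescaled error $U^{n,\varepsilon}$, establish uniform moment bounds, prove tightness, identify the limit as the unique solution of the stated SPDE, and finally transfer the result to the normalized process $\bar U^{n,\varepsilon}$.

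First I would write down the evolution equations for the two unnormalized objects. The true unnormalized filter $\rho$ satisfies the Zakai equation $d\rho_t(\varphi)=\rho_t(A\varphi)\,dt+\rho_t(h\varphi)\,dY_t$, while by construction the Gaussian mixture approximation $\rho^{n,\varepsilon}$ satisfies the same equation up to a perturbation coming from the per-step \emph{Gaussianization} and resampling of the mixture. Subtracting and multiplying by $n^\varepsilon$ yields a \emph{linear} equation for $U^{n,\varepsilon}$,
$$
U_t^{n,\varepsilon}(\varphi)=U_0^{n,\varepsilon}(\varphi)+\int_0^tU_s^{n,\varepsilon}(A\varphi)\,ds+\int_0^tU_s^{n,\varepsilon}(h\varphi)\,dY_s+\Lambda_t^{n,\varphi,\varepsilon},
$$
where $\Lambda^{n,\varphi,\varepsilon}$ is the accumulation of the rescaled one-step approximation errors. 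The whole analysis then hinges on controlling this forcing term.

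Second, I would establish the $L^2$ rate and the uniform moment bounds. The one-step error of the mixture has two contributions: a bias from replacing the exact conditional law by a Gaussian of variance $\sim 1/\sqrt n$ (controlled by $\varepsilon$), and a fluctuation from the finite number $n$ of particles. Balancing these gives a mean square error of order $(1/n)^{\min\{2\varepsilon,1\}}$, hence $\sup_{t\le T}\mathbb{E}\,|U_t^{n,\varepsilon}(\varphi)|^2<\infty$ uniformly in $n$ precisely when $2\varepsilon\le 1$, i.e. $\varepsilon\le 1/2$. Using these bounds together with the linear evolution equation, tightness of $\{U^{n,\varepsilon}\}_n$ in $D([0,T],\Phi')$ (with $\Phi$ built from $C_b^6(\mathbb{R}^d)$) follows from Mitoma's theorem, which reduces the problem to real-valued tightness of $\{U^{n,\varepsilon}(\varphi)\}_n$ for each test function, which in turn follows from Aldous's criterion applied to the evolution equation. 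Passing to the limit in the linear equation is routine for the drift and observation integrals by the $L^2$ bounds and continuity; the genuine work is showing $\Lambda^{n,\varphi,\varepsilon}\Rightarrow\Lambda^{\varphi,\varepsilon}$, which requires a central limit argument for the sum of the conditionally independent, mean-adjusted one-step errors, using their covariance structure to pin down the limiting noise. Because the limiting SPDE is linear in $U^\varepsilon$ with additive forcing $\Lambda^{\varphi,\varepsilon}$, uniqueness of its solution follows from a Gronwall energy estimate in the dual space, and uniqueness upgrades the subsequential limits to convergence in distribution of the full sequence. The normalized statement is then obtained from the Kallianpur--Striebel identity $\pi_t=\rho_t/\rho_t(\mathbf 1)$ by a first-order expansion, which produces exactly $\bar U_t^{\varepsilon}(\varphi)=\rho_t(\mathbf 1)^{-1}(U_t^\varepsilon(\varphi)-\pi_t(\varphi)U_t^\varepsilon(\mathbf 1))$. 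Finally, the divergence for $\varepsilon>1/2$ is read off from the same covariance computation: the variance of $\Lambda^{n,\varphi,\varepsilon}$ is of order $n^{2\varepsilon-1}\to\infty$, so no tight limit can exist.

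The hard part will be the convergence of the forcing term $\Lambda^{n,\varphi,\varepsilon}$: unlike the drift and noise terms, it does not pass to the limit by mere continuity, and one must analyze the covariance of the accumulated per-step Gaussianization errors carefully enough both to exhibit the Gaussian limiting object and to locate the sharp scaling threshold $\varepsilon=1/2$ that separates convergence from divergence.
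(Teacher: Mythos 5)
Your overall architecture coincides with the paper's: write the linear evolution equation for $U^{n,\varepsilon}$ with a forcing term coming from the Gaussianization remainders and the resampling, prove tightness test-function-by-test-function, identify any subsequential limit through the mean and quadratic-variation structure of the forcing term, invoke uniqueness of the limiting SPDE to upgrade to convergence of the whole sequence, and transfer to $\bar U^{n,\varepsilon}$ via the Kallianpur--Striebel decomposition $\pi_t^n(\varphi)-\pi_t(\varphi)=\rho_t(\mathbf 1)^{-1}(\rho_t^n(\varphi)-\rho_t(\varphi))-\pi_t^n(\varphi)\rho_t(\mathbf 1)^{-1}(\rho_t^n(\mathbf 1)-\rho_t(\mathbf 1))$. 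The choice of Mitoma plus Aldous rather than the paper's Roelly-Coppoletta projection criterion plus Kurtz's relative-compactness criterion on the one-point compactification $\overline{\mathbb R}$ is a legitimate cosmetic difference; both reduce measure-valued tightness to scalar tightness of $U^{n}(\varphi)$ for a dense family of test functions, and your identification of the threshold $\varepsilon=1/2$ from the $n^{2\varepsilon-1}$ scaling of the resampling variance is exactly the paper's reasoning. Note also that the paper does not reprove the $L^2$ rate; it is imported wholesale from the companion paper.

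The genuine gap is the uniqueness step. You propose a ``Gronwall energy estimate in the dual space,'' reasoning that since the SPDE is linear with additive forcing, the difference of two solutions satisfies the homogeneous equation $v_t(\varphi)=\int_0^t v_s(A\varphi)\,ds+\int_0^t v_s(h\varphi)\,dY_s$ and must vanish. But this homogeneous equation is the Zakai equation itself, and its uniqueness is not obtainable by a naive Gronwall argument: the drift term evaluates $v_s$ at the \emph{different} test function $A\varphi$, and since $A$ is an unbounded second-order operator the inequality does not close in any single norm on test functions; moreover the stochastic integral against $Y$ contributes a nonnegative term to $\frac{d}{dt}\tilde{\mathbb E}[v_t(\varphi)^2]$ rather than something absorbable. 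The paper instead runs the Lucic--Heunis second-moment duality: it defines $U^{ij}_t(\varphi_1,\varphi_2)=\tilde{\mathbb E}[U_t^i(\varphi_1)U_t^j(\varphi_2)]$ for two solutions $U^1,U^2$, computes via It\^o that the combination $v_t=(U^{12}_t-U^{11}_t)+(U^{21}_t-U^{22}_t)$ satisfies a closed homogeneous \emph{deterministic} bilinear equation (all the $\Lambda^\varphi$ cross-terms cancel), and appeals to the uniqueness theory of Lucic and Heunis for that equation to conclude $\tilde{\mathbb E}[(U_t^1(\varphi)-U_t^2(\varphi))^2]=0$. A further point your plan omits: this yields \emph{pathwise} uniqueness, whereas the convergence-in-distribution argument needs uniqueness \emph{in law}; the paper bridges this with a Yamada--Watanabe-type result for infinite-dimensional SPDEs (Ondrej\'at; R\"ockner--Schmuland--Zhang). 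Without replacing your Gronwall step by an argument of this kind, the proof does not go through.
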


\begin{remark}
The proof of the $L^2$-convergence rate of $\pi^{n,\varepsilon}\ (\rho^{n,\varepsilon})$ to $\pi\ (\rho)$ can be found in Section 4 of \cite{Crisan and Li 2}, hence we will not prove this part of Theorem \ref{thm.main_CLT} in this paper.
\end{remark}

The following is a summary of the contents of the paper.

In Section 2, we review the key results of stochastic filtering theory. The filtering framework is introduced first, with the focus on the problems where the signal $X$ and observation $Y$ are diffusion processes and the filtering equations are presented.

Section 3 contains the description of the generalised particle filters with Gaussian mixtures. These approximations use mixtures of Gaussian measures which will be set out, with the aim of estimating the solutions to the Zakai and the Kushner-Stratonovich equations. The Multinomial branching algorithm is chosen to be the associated correction mechanism.

Sections 4 and 5 contain the main result of the paper, which is the central limit theorem associated to the approximating system. The analysis is proceeded in a standard manner. In Section 4, based on the evolution equations of the approximating systems derived in \cite{Crisan and Li 2}, the error between the Gaussian mixture approximation and the true solution is recalibrated and shown to be a tight sequence. In section 5, we find its limit in distribution and show this limiting process is unique. 

This paper is concluded in Section 6 and with an Appendix which contains some additional results required in the main body of the paper.

\subsection{Notations}
$\bullet$
$\mathbb R^d$ - the $d$-dimensional Euclidean space.\newline
$\bullet$
$\overline{\mathbb R^d}$ - the one-point compactification of $\mathbb R^d$.\newline
$\bullet$
$\left(\mathbb R^d,\mathcal B(\mathbb R^d)\right)$ - the state space of the signal. $\mathcal
B(\mathbb R^d)$ is the associated Borel $\sigma$-algebra.\newline
$\bullet$
$B(\mathbb R^{d})$ - the space of bounded $\mathcal B(\mathbb R^d)$-measurable
functions from $\mathbb R^d$ to $\mathbb R$.\newline
$\bullet$
$\mathcal P\left(\mathbb R^d\right)$ - the family of Borel probability measures
on space $\mathbb R^d$.\newline
$\bullet$
$C_b(\mathbb R^d)$ - the space of bounded continuous functions on $\mathbb
R^d$.\newline
$\bullet$
$C_b^m(\mathbb R^d)$ - the space of bounded continuous functions on $\mathbb
R^d$ with bounded derivatives to order $m$.\newline
$\bullet$
$C_0^m(\mathbb R^d)$ - the space of continuous functions on $\mathbb R^d$,
vanishing at infinity with continuous partial derivatives up to order $m$.\newline
$\bullet$
$\Vert\cdot\Vert$ - the Euclidean norm for a $d\times p$ matrix $a$, $\Vert a\Vert=\sqrt{\sum_{i=1}^d\sum_{j=1}^pa_{ij}^2}$.\newline
$\bullet$
$\|\cdot\|_\infty$ - the supremum norm for $\varphi:\mathbb R^d\rightarrow\mathbb
R$: $\|\varphi\|_\infty=\sup_{x\in\mathbb R^d}\|\varphi(x)\|$.\newline
$\bullet$
$\|\cdot\|_{m,\infty}$ - the norm such that for  $\varphi$ on $\mathbb R^d$,
$\|\varphi\|_{m,\infty}=\sum_{|\alpha|\leq m}\sup_{x\in\mathbb R^d}|D_\alpha\varphi(x)|$,
where $\alpha=(\alpha^1,\ldots,\alpha^d)$ is a multi-index and $D_\alpha=(\partial_1)^{\alpha_1}\cdots(\partial_d)^{\alpha_d}$.\newline
$\bullet$
$\mathcal M_{F}(\mathbb R^d)$ - the set of finite measures on $\mathbb R^d$.\newline
$\bullet$
$\mathcal M_{F}(\overline{\mathbb R^d})$ - the set of finite measures on
$\overline{\mathbb R^d}$.\newline
$\bullet$
$D_{\mathcal M_{F}(\mathbb R^d)}[0,T]$ - the space of c\`adl\`ag functions
(or right continuous functions with left limits) $f:[0,T]\rightarrow\mathcal
M_{F}(\mathbb R^d)$.\newline
$\bullet$
$D_{\mathcal M_{F}(\mathbb R^d)}[0,\infty)$ - the space of c\`adl\`ag functions
(or right continuous functions with left limits) $f:[0,\infty)\rightarrow\mathcal
M_{F}(\mathbb R^d)$.

\section{The Filtering Problem and Key Result}
Let $(\Omega,\mathcal{F},\mathbb{P})$ be a probability space together with a filtration $(\mathcal{F}_t)_{t\geq0}$ which satisfies the usual conditions. On $(\Omega,\mathcal{F},\mathbb{P})$ we consider an $\mathcal{F}_t$-adapted process $X=\{X_t; t\geq0\}$  taking values on $\mathbb{R}^d$.
To be specific,
let $X=(X^i)_{i=1}^d$ be the solution of a $d$-dimensional stochastic differential
equation driven by a $p$-dimensional Brownian motion $V=(V^j)_{j=1}^p$:
\begin{equation}\label{eq.signal_is_diffusion}
X_t^i=X_0^i+\int_0^tf^i(X_s)ds+\sum_{j=1}^p\int_0^t\sigma^{ij}(X_s)dV_s^j,\quad\quad
i=1,\ldots,d.
\end{equation}
We assume that both $f=(f^i)_{i=1}^d:\mathbb{R}^d\rightarrow\mathbb{R}^d$
and $\sigma=(\sigma^{ij})_{i=1,\ldots,d;j=1,\ldots,p}:\mathbb{R}^d\rightarrow\mathbb{R}^{d\times
p}$ are globally Lipschitz. Under the globally Lipschitz condition, \eqref{eq.signal_is_diffusion}
has a unique solution (e.g., Theorem 5.2.9 in \cite{Karatzas and Shreve}). 

Let $h=(h_i)_{i=1}^m:\mathbb{R}^d\rightarrow\mathbb{R}^m$ be a bounded measurable function. Let $W$ be a standard $\mathcal{F}_t$-adapted $m$-dimensional Brownian motion on  $(\Omega,\mathcal{F},\mathbb{P})$ independent of $X$, and $Y$ be the process which satisfies the following evolution equation
\begin{equation}\label{eq.observation}
Y_t=Y_0+\int_0^th(X_s)ds+W_t,
\end{equation}
This process $Y=\{Y_t;t\geq0\}$ is called the \textit{observation} process. Let $\{\mathcal{Y}_t,t\geq0\}$ be the usual augmentation of the filtration associated with the process $Y$, viz
$\mathcal{Y}_t=\sigma(Y_s,s\in[0,t])\vee\mathcal{N}.$

As stated in the introduction, the filtering problem consists in determining the conditional distribution $\pi_t$ of the signal $X$ at time $t$ given the information accumulated from observing $Y$ in the interval $[0,t]$; that is, for $\varphi\in B(\mathbb{R}^d)$,
\begin{equation}
\pi_t(\varphi)\triangleq\int_{\mathbb{R}^d}\varphi(x)\pi_t(dx)=\mathbb{E}[\varphi(X_t)\mid\mathcal{Y}_t].
\end{equation}

Throughout this paper we make the following assumption.
\begin{assumption}[A]
Assume that the coefficients $f^i$ and $\sigma^{ij}$ are bounded and six times differentiable, and $h^i$ is twice differentiable and has bounded derivatives. That is, $f^i, \sigma^{ij}\in C_b^6(\mathbb R^d)$ and $h^i\in C_b^2(\mathbb R^d)$. 
\end{assumption}
Let $\tilde{\mathbb P}$ be a new probability measure on $\Omega$, under which the process $Y$ is a Brownian motion. To be specific, let $Z=\{Z_t,t\geq0\}$ be the process defined by
\begin{equation}\label{eq.definition_Z}
Z_t=\exp\left(-\sum_{i=1}^m\int_0^t h^i(X_s)dW_s^i-\frac{1}{2}\sum_{i=1}^m\int_0^t h^i(X_s)^2ds\right),\quad t\geq0;
\end{equation}
and we introduce a probability measure $\tilde{\mathbb{P}}^t$ on $\mathcal{F}_t$ by specifying its Radon-Nikodym derivative with respect to $\mathbb{P}$ to be given by $Z_t$.
We finally define a probability measure $\tilde{\mathbb{P}}$ which is equivalent to $\mathbb{P}$ on $\bigcup_{0\leq t<\infty}\mathcal{F}_t$.
Then we have the following Kallianpur-Striebel formula (see \cite{Kallianpur and Karandikar})
\begin{equation}
\pi_t(\varphi)=\frac{\rho_t(\varphi)}{\rho_t(\mathbf 1)}\quad\quad \tilde{\mathbb P}(\mathbb{P})-a.s.\quad\text{for}\ \varphi\in B(\mathbb{R}^d),
\end{equation}
where $\rho_t$ is an $\mathcal Y_t$-adapted measure-valued process satisfying the following \textbf{Zakai Equation} (see \cite{Zakai}).
\begin{equation}\label{eq.zakai_equation}
\rho_t(\varphi)=\pi_0(\varphi)+\int_0^t\rho_s(A\varphi)ds+\int_0^t\rho_s(\varphi h^\top)dY_s,\quad\tilde{\mathbb{P}}-a.s.\quad\forall t\geq0
\end{equation}
for any $\varphi\in\mathcal{D}(A)$. 
In \eqref{eq.zakai_equation}, operator $A$ is the infinitesimal generator associated with the signal process $X$
\begin{equation}\label{eq.generator_A}
A=\sum_{i=1}^d f^i\frac{\partial}{\partial x^i}+\sum_{i=1}^d\sum_{j=1}^da^{ij}\frac{\partial^2}{\partial
x^i\partial x^j},
\end{equation}   
where $a=(a^{ij})_{i,j=1,\ldots,d}:\mathbb{R}^d\rightarrow\mathbb{R}^{d\times
d}$ is the matrix-valued function defined as $a=\frac{1}{2}\sigma^\top\sigma$; and $\mathcal{D}(A)$ is the domain of $A$. 

Also the process $\rho=\{\rho_t;t\geq0\}$ is called the unnormalised conditional distribution of the signal.

In the following we will obtain the central limit theorem for the associated generalised particle filters with Gaussian mixtures. We denote by $\pi^{n,\varepsilon}=\{\pi_t^{n,\varepsilon};t\geq0\}$ the approximating measures of the solution of the filtering problem, where $n$ is the number of Gaussian measures in the approximating system, and $\varepsilon$ is a parameter measuring the amount of ``Gaussianity'' of the generalised particles.

\section{Gaussian Mixtures Approximation}\label{gaussian}

For ease of notations, we assume, hereinafter from this section, that the state space of the signal is one-dimensional. For clarity we describe the Gaussian mixture approximation introduced in \cite{Crisan and Li, Crisan and Li 2} below in this section. All the results presented here can be extended without significant technical difficulties to the multi-dimensional case. 

Firstly, we let $\Delta=\{0=\delta_0<\delta_1<\cdots<\delta_N=T\}$ be an equidistant partition of the interval $[0,T]$ with fixed equal length,
with $\delta_i=i\delta,\ i=1,\ldots,N$; and $N=\frac{T}{\delta}$. We also denote $n$ by the number of generalised particles in the system. The approximating algorithm is then introduced as follows.

\textbf{Initialisation}: At time $t=0$, the particle system consists of $n$ Gaussian measures  all with equal weights $1/n$, initial means $v_j^n(0)$, and initial variances $\omega_j^n(0)$, for $j=1,\ldots,n$; denoted by 
$\Gamma_{v_j^n(0),\omega_j^n(0)}$. 
The approximation of $\pi_0^{n,\varepsilon}$ has the form
\begin{equation}
\pi_0^{n,\varepsilon}\triangleq{\frac{1}{n}}\sum_{j=1}^n\Gamma_{v_j^n(0),\omega_j^n(0)}.
\end{equation}
We will, for $1\leq j\leq n$, choose the initial variances $\omega_j^n(0)=\alpha\beta$ and be given the initial means $v_j^n(0)$, where $\varepsilon$, $\alpha$ and $\beta$ are some parameters defined later in this section.

\textbf{Recursion}: During the interval $t\in[i\delta,(i+1)\delta)$, $i=1,\ldots,N,$ the approximation $\pi^{n,\varepsilon}$ of the normalised conditional distribution $\pi$ will take the form
\begin{equation}\label{eq.gaussian_mixture_approximation}
\pi_t^{n,\varepsilon}\triangleq\sum_{j=1}^n\bar a_j^n(t)\Gamma_{v_j^n(t),\omega_j^n(t)},
\end{equation}
where $v_j^n(t)$ denotes the mean and $\omega_j^n(t)$ denotes the variance of the Gaussian measure $\Gamma_{v_j^n(t),\omega_j^n(t)}$, and $a_j^n(t)$ is the (unnormalised) weight of the particle, and $$\bar a_j^n(t)=\frac{a_j^n(t)}{\sum_{k=1}^na_k^n(t)}$$
is the normalised weight. Obviously, each particle is characterised by the triple process $(a_j^n,v_j^n,\omega_j^n)$ which is chosen to evolve as
\begin{equation}\label{eq.appro_with_variance}
\left\{
\begin{array}{lll}
a_j^n(t)=1+\int_{i\delta}^t a_j^n(s)h(v_j^n(s))dY_s,\\
v_j^n(t)=v_j^n(i\delta)+\int_{i\delta}^tf\left(v_j^n(s)\right)ds
+\sqrt{1-\alpha}\int_{i\delta}^t\sigma\left(v_j^n(s)\right)dV_s^{(j)},\\
 \omega_j^n(t)=\alpha\left(\beta+\int_{i\delta}^t\sigma^2\left(v_j^n(s)\right)ds\right),
\end{array}
\right.
\end{equation}
where $\{{V^{(j)}}\}_{j=1}^n$ are mutually independent Brownian motions and  independent of $Y$. The parameter $\alpha$ is a real number in the interval $[0,1]$. Here we choose $\alpha=n^{-\varepsilon}$, where $\varepsilon\in[0,\infty]$ is
a non-negative parameter measuring the ``Gaussianity'' of the generalised particles. To be specific, the variance of each Gaussian (generalised) particle can be controlled by the value of $\varepsilon$. For $\varepsilon=\infty$ ($\alpha=0$) we recover the classic particle approximation (see, for example, Chapter 9 in \cite{Bain and Crisan}) with the Gaussian measures degenerated to Dirac measures; for $\varepsilon=0$ ($\alpha=1$) we have the largest possible variances and the means of the Gaussian measures evolve deterministically (the stochastic term is eliminated). Therefore we can normally restrict ourselves to the cases where $\varepsilon\in(0,\infty)$. One of the purposes of this paper is to find the optimal value for $\varepsilon$. The parameter $\beta$ is a positive real number, which we call the \emph{smoothing parameter}, ensures that the approximating measure has smooth density at the branching/correction times.

\textbf{Correction}: At the end of the interval $[i\delta,(i+1)\delta)$, immediately prior to the correction step, each Gaussian measure is replaced by a random number of offsprings, which are Gaussian measures with mean $X_j^n((i+1)\delta)$ and variance $\alpha\beta$, where the mean $X_j^n$ is a normally distributed random variable, i.e.
$$
X_j^n((i+1)\delta)\sim\mathcal N\left(v_j^n(i+1)\delta_-,\omega_j^n(i+1)\delta_-\right),\quad j=1,\ldots,n;
$$ 
where by $(i+1)\delta_-$ we denote the time immediately prior to correction. 
We denote by $o_j^{n,(i+1)\delta}$ the number of ``offsprings'' produced by the $j$th generalised particle. The total number of offsprings is fixed to be $n$ at each correcting event.

After correction all the particles are re-indexed from 1 to $n$ and all of
the unnormalised weights are re-initialised back to 1; and the particles
evolve following \eqref{eq.appro_with_variance} again. 
The recursion is repeated $N$ times until we reach the terminal time $T$,
where we obtain the approximation $\pi_T^n$ of $\pi_T$.

We refer to \cite{Crisan and Li 2} for a brief explanation why we should introduce correction mechanism. In the following we adopt the correction algorithm called the Multinomial Resampling to determine the number of offsprings $\{o_j^n\}_{j=1}^n$ (see, for example, \cite{Obanubi}). 
The multinomial resampling algorithm essentially consists of sampling $n$ times with replacement at correction times. At branching times, we sample $n$ times (with replacement) from the population of Gaussian random variables $X_j^n((i+1)\delta)$ (with means $v_j^n((i+1)\delta_-)$ and variances $\omega_j^n((i+1)\delta_-)),j=1,\ldots,n$ according to the multinomial probability distribution given by the corresponding normalised weights $\bar a_j^n((i+1)\delta_-),j=1,\ldots,n$. Therefore, by definition of multinomial distribution, $o_j^{n,(i+1)\delta}$ is the number of times $X_j^n((i+1)\delta)$ is chosen at time $(i+1)\delta$; that is to say, $o_j^{n,(i+1)\delta}$ is the number of offspring produced by this Gaussian random variable.

We then define the process $\xi^n=\{\xi_t^n;t\geq0\}$ by
$$
\xi_t^n\triangleq\left(\prod_{i=1}^{[t/\delta]}\frac{1}{n}\sum_{j=1}^na_j^{n,i\delta}\right)\left(\frac{1}{n}\sum_{j=1}^na_j^n(t)\right).
$$ 
Then $\xi^n$ is a martingale and by Exercise 9.10 in \cite{Bain and Crisan}
we know for any $t\geq0$ and $p\geq1$, there exist two constants $c_1^{t,p}$
and $c_2^{t,p}$ which depend only on $t$, $p$, and $\max_{k=1,\ldots,m}\|h_k\|_{0,\infty}$,
such that
\begin{equation}
\sup_{n\geq0}\sup_{s\in[0,t]}\tilde{\mathbb{E}}\left[(\xi_s^n)^p\right]\leq
c_1^{t,p},\qquad\text{}
\end{equation} 
and
\begin{equation}\label{eq.bound_for_xi_2}
\max_{j=1,\ldots,n}\sup_{n\geq0}\sup_{s\in[0,t]}\tilde{\mathbb{E}}\left[(\xi_s^na_j^n(s))^p\right]\leq
c_2^{t,p}.
\end{equation}
We use the martingale $\xi^n$ to linearise $\pi^{n,\varepsilon}$, to be specific, we define
the measure-valued process $\rho^{n,\varepsilon}=\{\rho_t^{n,\varepsilon}:t\geq0\}$ to be
\begin{align}
\rho_t^{n,\varepsilon}\triangleq\xi_t^n\pi_t^{n,\varepsilon}=\frac{\xi_{[t/\delta]\delta}^n}{n}\sum_{j=1}^na_j^n(t)\Gamma_{v_j^n(t),\omega_j^n(t)}.
\end{align}

Define $U=\{U_t^{n,\varepsilon}:t\geq0\}$ to be the measure-valued process
\begin{equation}\label{eq.rescaled_error}
U_t^{n,\varepsilon}\triangleq n^\varepsilon(\rho_t^{n,\varepsilon}-\rho_t),
\end{equation}
and we aim to find an appropriate range for $\varepsilon$ and show that, with the right choice of $\varepsilon$, the corresponding $\{U^{n,\varepsilon}\}_n$ converges in distribution to a process
$U^\varepsilon$, which is uniquely identified as the solution of a certain martingale
problem. This implies that for any continuous and bounded test function,
\begin{equation}
\lim_{n\rightarrow\infty} n^{\varepsilon}(\rho_t^{n,\varepsilon}(\varphi)-\rho_t(\varphi))=U_t^\varepsilon(\varphi);
\end{equation}
hence the error of the approximations $\rho_t^{n,\varepsilon}(\varphi)$ of $\rho_t(\varphi)$
is roughly ${U_t^\varepsilon(\varphi)}n^{-\varepsilon}$.

By Proposition 4.1 in \cite{Crisan and Li 2}, we have
\begin{align}\label{eq.evolution_for_U^n}
U_t^{n,\varepsilon}(\varphi)
=&U_0^{n,\varepsilon}(\varphi)+\int_0^tU_s^{n,\varepsilon}(A\varphi)ds+\int_0^tU_t^{n,\varepsilon}(h\varphi)dY_s+n^\varepsilon M_{[t/\delta]}^{n,\varphi}+n^\varepsilon B_t^{n,\varphi},
\end{align}
in \eqref{eq.evolution_for_U^n},
\begin{align}\label{eq.M_n_varepsilon}
n^\varepsilon M_{[t/\delta]}^{n,\varphi}&=n^{\varepsilon-1}\sum_{i=0}^{[t/\delta]}\xi_{i\delta}^n\sum_{j=1}^n\Bigg[o_j^{n,i\delta}\int_{\mathbb R}\varphi(x)\frac{e^{-\frac{(x-X_j^n(i\delta))^2}{2\alpha\beta}}}{\sqrt{2\pi\alpha\beta}}dx-n\bar a_j^n(i\delta-)\int_{\mathbb R}\varphi(x)\frac{e^{-\frac{(x-v_j^n(i\delta-))^2}{2\omega_j^n(i\delta-)}}}{\sqrt{2\pi\omega_j^n(i\delta-)}}dx\Bigg],\\
n^\varepsilon B_t^{n,\varphi}&=n^{\varepsilon-1}\sum_{j=1}^n\int_{0}^{t}\xi_{[s/\delta]\delta}^na_j^n(s)\Big[R_{s,j}^1(\varphi)ds+R_{s,j}^2(\varphi)dY_s+R_{s,j}^3(\varphi)dV_s^{(j)}\Big];
\end{align}
where
\begin{align}
R_{s,j}^1(\varphi)=&\omega_j^n(s)\left[\frac{1}{2}(f\varphi''')(v_j^n(s))+\frac{\alpha}{4}(\sigma\varphi^{(4)})(v_j^n(s))+2\alpha\sigma^2(v_j^n(s))I_{4,j}^{(4)}(\varphi)-I_j(A\varphi)\right]\nonumber\\
+&(\omega_j^n(s))^2\left[f(v_j^n(s))I_{4,j}^{(5)}(\varphi)+\frac{\alpha\sigma^2(v_j^n(s))}{2\sqrt{\omega_j^n(s)}}I_{5,j}(\varphi)+\frac{1-\alpha}{2}\sigma^2(v_j^n(s))I_{4,j}^{(6)}(\varphi)\right],\label{eq.R^1}
\\
R_{s,j}^2(\varphi)=&\omega_j^n(s)\left[\frac{1}{2}h(v_j^n(s))\varphi''(v_j^n(s))-I_j(h\varphi)\right]
+(\omega_j^n(s))^2h(v_j^n(s))I_{4,j}^{(4)}(\varphi),\label{eq.R^2} \\
R_{s,j}^3(\varphi)=&\sqrt{1-\alpha}\Bigg[\sigma(v_j^n(s))\varphi'(v_j^n(s))+\frac{1}{2}\omega_j^n(s)\sigma(v_j^n(s))\varphi'''(v_j^n(s))\nonumber\\
&\qquad\quad+(\omega_j^n(s))^2\sigma(v_j^n(s))I_{4,j}^{(5)}(\varphi)\Bigg];
\label{eq.R^3}
\end{align}
and
\begin{align}
I_{4,j}^{(k)}(\varphi)=&\int_{\mathbb{R}}\frac{y^4e^{\frac{-y^2}{2}}}{\sqrt{2\pi}}\int_0^1\varphi^{(k)}\left(v_j^n(s)+uy\sqrt{\omega_j^n(s)}\right)\frac{(1-u)^3}{6}dudy,\quad\text{for}\
k=4,5,6;\nonumber\\
I_{5,j}(\varphi)=&\int_{\mathbb{R}}\frac{y^5e^{\frac{-y^2}{2}}}{\sqrt{2\pi}}\int_0^1\varphi^{(5)}\left(v_j^n(s)+uy\sqrt{\omega_j^n(s)}\right)\frac{u(1-u)^3}{6}dudy;\nonumber\\
I_j(\psi)=&\int_{\mathbb{R}}\frac{y^2e^{\frac{-y^2}{2}}}{\sqrt{2\pi}}\int_0^1(\psi)''\left(v_j^n(s)+uy\sqrt{\omega_j^n(s)}\right)(1-u)dudy,\quad\text{for}\
\psi=A\varphi,h\varphi.\nonumber
\end{align}

The machinery used to prove the convergence in distribution for $U^{n,\varepsilon}$ consists of two steps. In step one we show the tightness property of $U^{n,\varepsilon}$. In step two we show that any convergent subsequence of $U^{n,\varepsilon}$ has a limit $U^\varepsilon$ (in distribution) that is the unique solution of a certain martingale problem. These two steps are done in the following two sections.

\subsection*{Discussion on the parameter $\varepsilon$}
Before proceeding to the proof of convergence in distribution, here we discuss the influence of $\varepsilon$ on the convergence of the approximating algorithm. From Section 4 in \cite{Crisan and Li 2} it can be concluded that the $L^2$-convergence rate of the Gaussian mixture approximation is $\left(\frac{1}{n}\right)^{\min\{2\varepsilon,1\}}$. It means that for $\varepsilon\in(0,1/2]$ the convergence rate becomes better as $\varepsilon$ increases, and it then stays at $n^{-1}$ for any $\varepsilon>1/2$.

Following the proof of Lemma 4.7 in \cite{Crisan and Li 2}, it can be shown that, when $\varepsilon>1/2$, $n^\varepsilon M_{[t/\delta]}^{n,\varphi}$ in \eqref{eq.M_n_varepsilon} will diverge as $n\rightarrow\infty$. Therefore the limit (in distribution) of the measure valued process $\{U^{n,\varepsilon}\}_n$ does not exist when $\varepsilon>1/2$, and the central limit theorem for the Gaussian mixture approximation can only be possibly obtained when $\varepsilon\in(0,1/2]$.

As we will see in the following two sections, the essence of the analysis and proofs of the convergence in distribution is the same for different $\varepsilon$, except for some notational changes. In other words, the central limit theorem can be proven for all $\varepsilon\in(0,1/2]$ in the same manner, and the choice of $\varepsilon$ will not have a crucial influence on the proof. We therefore choose $\varepsilon=1/2$ in the remaining of the paper, since it gives us the optimal $L^2$-convergence rate ($1/n$) of the approximating algorithm.
Thus, with no risk of abuse of notations, we can eliminate the superscript $\varepsilon$ for $U^{n,\varepsilon}$, $\pi^{n,\varepsilon}$ and $\rho^{n,\varepsilon}$, and simply write them as $U^n$, $\pi^n$ and $\rho^n$ from next section to ease notations.

\section{Step One: Tightness}
In this section we prove the tightness of the measure-valued process $\{U_t^n;t\geq0\}$. It
is possible to obtain the tightness and convergence in distribution results
by endowing $\mathcal M_F{(\mathbb R)}$ with the weak topology. In this topology
a sequence of finite measures $\{\mu^n\}_{n\in\mathbb N}\subset\mathcal M_F(\mathbb
R)$ converges to $\mu\in\mathcal M_F(\mathbb R)$ if and only if for a set
$\mathcal S(\varphi)$ of test functions, $\mu^n(\varphi)$ converges to $\mu(\varphi)$
for all $\varphi\in\mathcal S(\varphi).$ $\mathcal S(\varphi)$ can be taken
to be $C_b^m(\mathbb R)$ for any $m\geq1$. 

Before proceeding further discussion on $U^n$, we define the metric on $\mathcal
M_F(\mathbb
R)$ which generates the weak topology. 
Let $\varphi_0=1$ and $\{\varphi_i\}_{i\geq0}$ be a sequence of functions
which are dense in the space of continuous functions with compact support
on $\mathbb R$. Then the metric $d_{\mathcal M}$ is defined as
$$
d_{\mathcal M}:\mathcal M_F(\mathbb R)\times\mathcal M_F(\mathbb R)\rightarrow[0,\infty),\qquad\quad
d_{\mathcal M}(\mu,\nu)=\sum_{i=0}^{\infty}\frac{\mu(\varphi_i)-\nu(\varphi_i)}{2^i\|\varphi_i\|_{0,\infty}};
$$
and $d_{\mathcal M}$ generates the weak topology on $\mathcal M_{F}(\mathbb
R)$ in the sense that $\mu^n$ converges weakly to $\mu$ if and only if $\lim_{n\rightarrow\infty}d_{\mathcal
M}(\mu^n,\mu)=0$ as $\{\varphi_i\}_{i\geq0}$ is a convergence determining
set of functions over $\mathcal M_F(\mathbb R)$.

However, the space $\left(D_{\mathcal M_F(\mathbb R)}[0,\infty),d_{\mathcal
M}\right)$
is separable but not complete under this metric because its underlying space
$\left(\mathcal M_F(\mathbb R),d_{\mathcal M}\right)$ is separable but not
complete.
This inconvenience makes us unable to make use of Prohorov's Theorem (see, for example, Theorem 2.4.7 in \cite{Karatzas
and Shreve}). In
order
to tackle this problem, we consider the one-point compactification of $\mathbb
R$
$$\overline{\mathbb R}\triangleq\mathbb R\cup\{\infty\},$$
Then we embed the space $D_{\mathcal M_F(\mathbb R)}[0,\infty)$
into the complete and separable space $D_{\mathcal M_F(\mathbb{\overline
R})}[0,\infty)$ by defining a map such that
$$
\mu\in\mathcal M_F({\mathbb R})\rightarrow\overline\mu\in\mathcal
M_F(\overline{\mathbb R})\quad\text{and}\quad\overline\mu(A)=\mu(A\cap\mathbb
R),\ \forall A\in\overline{\mathbb R}.
$$
The family $\{U_t^n\}$ can then be viewed as a stochastic process with sample
paths in the complete and separable space $D_{\mathcal M_F(\mathbb{\overline
R})}[0,\infty)$, or as a random variable with values in the space $\mathcal
P(D_{\mathcal M_F(\mathbb{\overline R})}[0,\infty))$ -- the space of probability
measures over $D_{\mathcal M_F(\mathbb{\overline R})}[0,\infty)$.

We are now ready to show that the family of processes $\{U^n\}$ is tight
on $[0,T]$ for all $T>0$. In other words, let $\{\tilde{\mathbb{P}}_n\}\subset\mathcal
P\left(D_{\mathcal M_F(\overline{\mathbb R})}[0,T]\right)$ be the family
of associated probability distributions of $U^n$; in other words, $\tilde{\mathbb
P}_n(B)=\tilde{\mathbb P}_n(U^n\in B)$ for all $B\in\mathcal B(D_{\mathcal
M_F(\overline{\mathbb
R})}[0,T])$. We aim to show that $\{\tilde{\mathbb P}_n\}$ is relatively
compact and hence, by Prohorov's Theorem, tight. To be specific, we will
make use of the following theorem (Theorem 2.1 in \cite{Roelly-Coppoletta}):
\begin{theorem}\label{thm.stochastic17}
A family of probabilities $\{\tilde{\mathbb{P}}_n\}_{n}\subset\mathcal
P\left(D_{\mathcal M_F(\overline{\mathbb R^d})}[0,T]\right)$ is tight, if
there exits a dense sequence $\{\tilde f_k\}_{k\geq0}$ in $C_b(\overline{\mathbb
R^d})$ such that for each $k\in\mathbb N$, $\{\pi_{\tilde f_k}\tilde{\mathbb
P}_n\}_n\subset\mathcal P\left(D_{\overline{\mathbb R}}[0,T]\right)$ is a
tight sequence of probabilities; where $\pi_{\tilde f_k}:\mathcal M_F(\overline{\mathbb
R^d})\rightarrow\overline{\mathbb R}$ is defined by $\pi_{\tilde f_k}(\mu)=\mu(\tilde
f_k)$ for $\mu\in\mathcal M_F(\overline{\mathbb R^d})$.
\end{theorem}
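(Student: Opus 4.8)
The plan is to derive the tightness of $\{\tilde{\mathbb P}_n\}$ on $D_{\mathcal M_F(\overline{\mathbb R^d})}[0,T]$ from a standard two-part tightness criterion for c\`adl\`ag processes valued in a complete separable metric space, namely a compact-containment condition together with an Aldous--Rebolledo stopping-time condition (as in Joffe--M\'etivier or Ethier--Kurtz), and then to read off both ingredients from the hypothesised tightness of the scalar projections. The point of departure is that $\overline{\mathbb R^d}$ is compact, so $C_b(\overline{\mathbb R^d})=C(\overline{\mathbb R^d})$, the dense sequence $\{\tilde f_k\}$ is convergence determining on $\mathcal M_F(\overline{\mathbb R^d})$, and the weak topology is metrised by a series metric of the form $d_{\mathcal M}(\mu,\nu)=\sum_{k\ge0}2^{-k}\bigl(1\wedge|\mu(\tilde f_k)-\nu(\tilde f_k)|\bigr)$ built from the very family appearing in the theorem. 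Writing $X^n$ for the canonical process with law $\tilde{\mathbb P}_n$, so that $\pi_{\tilde f_k}\tilde{\mathbb P}_n$ is the law of the real process $X^n(\tilde f_k)$, the whole argument will be to transfer tightness through this series metric; once relative compactness is shown, Prohorov's theorem gives tightness.

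First I would establish compact containment: for each fixed $t$, the family $\{X^n_t\}_n$ is tight in $\mathcal M_F(\overline{\mathbb R^d})$. Since $\overline{\mathbb R^d}$ is compact, a set of the form $K_C=\{\mu:\mu(\mathbf 1)\le C\}$ is weakly compact, so it suffices to control the total mass. By density there is an index $k_0$ with $\|\tilde f_{k_0}-\mathbf 1\|_{0,\infty}\le 1/2$, whence $\tilde f_{k_0}\ge 1/2$ and, for a positive measure, $\mu(\mathbf 1)\le 2\,\mu(\tilde f_{k_0})$. The hypothesised tightness of $\{X^n(\tilde f_{k_0})\}$ in $D_{\overline{\mathbb R}}[0,T]$ makes the one-dimensional marginals $\{X^n_t(\tilde f_{k_0})\}_n$ tight in $\overline{\mathbb R}$, so for each $\eta>0$ there is $C_\eta$ with $\inf_n\tilde{\mathbb P}_n\bigl(X^n_t(\tilde f_{k_0})\le C_\eta/2\bigr)\ge1-\eta$, and therefore $\inf_n\tilde{\mathbb P}_n(X^n_t\in K_{C_\eta})\ge1-\eta$. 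This yields the compact-containment condition; the same scalar tightness, now used at the level of paths, controls $\sup_{t\le T}X^n_t(\mathbf 1)$ uniformly in $n$, a fact I will reuse.

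Next I would verify the Aldous condition for $d_{\mathcal M}$: for all $\gamma,\eta>0$ there are $\delta_0>0$ and $n_0$ such that, for every $n\ge n_0$ and every pair of $\mathcal Y$-stopping times $\sigma\le\tau\le(\sigma+\delta)\wedge T$ with $\delta\le\delta_0$, one has $\tilde{\mathbb P}_n\bigl(d_{\mathcal M}(X^n_\tau,X^n_\sigma)>\gamma\bigr)\le\eta$. Here the series form of $d_{\mathcal M}$ does the work and sidesteps the delicate single-time-change issue of the $J_1$ topology, because the Aldous condition only compares two time points. Setting $\Delta_k=X^n_\tau(\tilde f_k)-X^n_\sigma(\tilde f_k)$, I choose $J$ with $2^{-J}<\gamma/2$, so that the tail $\sum_{k>J}2^{-k}\bigl(1\wedge|\Delta_k|\bigr)\le 2^{-J}<\gamma/2$ deterministically. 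For each of the finitely many indices $k\le J$, the hypothesised tightness of $\{X^n(\tilde f_k)\}$ in $D_{\overline{\mathbb R}}[0,T]$ gives, by the necessity direction of the scalar Aldous criterion, a $\delta$ making $\sup_{n\ge n_0}\tilde{\mathbb P}_n\bigl(|\Delta_k|>\gamma/4\bigr)\le\eta/(J+1)$ for all admissible $\sigma,\tau$; on the intersection of the complementary events, of probability at least $1-\eta$, we get $\sum_{k\le J}2^{-k}\bigl(1\wedge|\Delta_k|\bigr)\le(\gamma/4)\sum_{k\le J}2^{-k}<\gamma/2$. Adding the tail bound yields $d_{\mathcal M}(X^n_\tau,X^n_\sigma)<\gamma$ off an event of probability at most $\eta$, which is the claim.

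Finally, combining compact containment with the Aldous condition and invoking the metric-space tightness criterion gives relative compactness of $\{\tilde{\mathbb P}_n\}$ in $\mathcal P\bigl(D_{\mathcal M_F(\overline{\mathbb R^d})}[0,T]\bigr)$, and Prohorov's theorem concludes tightness. The step I expect to be the main obstacle is exactly the transfer from scalar to measure-valued tightness: one must reconcile the infinitely many independently behaving projections with a single notion of path regularity. The Aldous formulation, together with the $2^{-k}$ weights of $d_{\mathcal M}$ and the uniform mass bound from compact containment, is what makes the truncation go through cleanly; had I instead worked through the Skorokhod modulus $w'$ directly, the need for one common $\delta$-sparse partition serving all coordinates would have required establishing joint (not merely marginal) tightness of the vectors $(X^n(\tilde f_0),\dots,X^n(\tilde f_J))$ in $D_{\mathbb R^{J+1}}[0,T]$, which is the genuinely subtle point that the stopping-time route circumvents.
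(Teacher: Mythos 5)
The paper does not actually prove this statement: it is quoted as Theorem 2.1 of \cite{Roelly-Coppoletta}, so your proposal has to be judged on its own merits rather than against an in-paper argument. Its architecture (compact containment plus a two-time-point regularity estimate transferred through the series metric $d_{\mathcal M}$) is the standard shape, but the central step rests on a false premise: the Aldous condition is \emph{not} necessary for tightness in $D_{\mathbb R}[0,T]$. The constant sequence $x^n\equiv\mathbf 1_{[t_0,T]}$ is trivially tight, yet with the deterministic stopping times $\sigma=t_0-\delta/2$ and $\tau=t_0$ one has $|x^n(\tau)-x^n(\sigma)|=1$ for every small $\delta>0$; necessity only holds under an additional asymptotic quasi-left-continuity assumption, which is unavailable here (and is genuinely violated in this paper, whose processes jump at the fixed resampling times $i\delta$). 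So you cannot extract the coordinate-wise Aldous estimates for $k\le J$ from the hypothesised tightness of $\{X^n(\tilde f_k)\}_n$, and the stopping-time route does not in fact circumvent the jump-alignment difficulty you flag at the end. A symptom that something must be wrong: your argument uses $\{\tilde f_k\}$ only through the metric and through an approximation of $\mathbf 1$, so if valid it would prove the same statement for any convergence-determining (not necessarily dense) family; but for $E=\{a,b\}$, $f_1=\mathbf 1_{\{a\}}$, $f_2=\mathbf 1_{\{b\}}$ and $X^n=\mathbf 1_{[t_0,T]}\delta_a+\mathbf 1_{[t_0+1/n,T]}\delta_b$, each projection is tight while $X^n$ is not tight in $D_{\mathcal M_F(E)}[0,T]$, since no single time change aligns the two jumps. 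The theorem is true only because density of $\{\tilde f_k\}$ in $C_b$ supplies test functions $f$ with $f(a)f(b)\neq0$ whose projections see both jumps and are therefore not tight; any correct proof (Roelly-Coppoletta's, or Jakubowski's criterion with a test family closed under addition) must exploit this, and yours never does.

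A secondary problem sits in the compact-containment step. Since $\overline{\mathbb R}$ is compact, tightness of $\{X^n(\tilde f_{k_0})\}_n$ in $D_{\overline{\mathbb R}}[0,T]$ is nearly vacuous and does not produce a constant $C_\eta$ with $\inf_n\tilde{\mathbb P}_n\bigl(X^n_t(\tilde f_{k_0})\le C_\eta/2\bigr)\ge1-\eta$: compact subsets of $D_{\overline{\mathbb R}}[0,T]$ may contain paths passing through $\infty$. Read literally, the statement as transcribed in the paper is even false --- take $X^n\equiv n\delta_{x_0}$, whose projections are constant paths and hence tight in $D_{\overline{\mathbb R}}[0,T]$ while the total masses diverge. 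The intended hypothesis, as in \cite{Roelly-Coppoletta}, is tightness of the real-valued projections in $D_{\mathbb R}[0,T]$, whose compact sets are uniformly bounded; under that reading your mass bound via $\tilde f_{k_0}\ge\mathbf 1/2$ is fine. The compact-containment half is therefore reparable, but the Aldous half is a genuine gap.
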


In the remaining of this section, because of the definition of the distance
$d_{\mathcal M}$, we choose $(\tilde f_k)_{k\geq0}$ to be defined as follows:
$\tilde f_0\equiv1$, and $\tilde f_k$ ($k\geq1$) is chosen so that $\tilde
f_k\big|_{\mathbb R}$ is a dense sequence in $\mathcal C_b^6(\mathbb R)$,
the space of six times differentiable continuous functions on $\mathbb R$,
vanishing at infinity with continuous partial derivatives up to and including
the sixth order.

According to Theorem \ref{thm.stochastic17}, it suffices to prove the tightness
result for $\{\pi_{\tilde f_k}\tilde{\mathbb P}_n\}_n$. We will make use
of the following criteria, which can be found in \cite{Ethier and Kurtz},
to show that $\{\pi_{\tilde f_k}U^n\}_n=\{U^n(\tilde f_k)\}_n$ is tight,
and then the tightness of $\{\pi_{\tilde f_k}\tilde{\mathbb P}_n\}$ follows
by applying Theorem \ref{thm.stochastic17}.

\begin{theorem}[Kurtz's criteria of relative compactness]\label{thm.Kurtz}
Let $(E,d)$ be a separable and complete metric space and let $\{X^n\}_{n\in\mathbb
N}$ be a sequence of processes with sample paths in $D_E[0,\infty)$. Suppose
that for every $\eta>0$ and rational $t$, there exists a compact set $\Gamma_{\eta,t}$
such that
\begin{equation}\label{eq.condition1}
\sup_n\mathbb P(X_t^n\notin\Gamma_{\eta,t})\leq\eta.
\end{equation}
Then $\{X^n\}_{n\in\mathbb N}$ is relatively compact if and only if the following
conditions hold:
\begin{itemize}
\item
For each $T'>0$, there exists $\zeta>0$ and a family $\{\gamma^n(\Delta):0<\Delta<1\}$
of non-negative random variables
\begin{equation}\label{eq.condition2}
\tilde{\mathbb E}\left[\left(1\wedge d(X_{t+u}^n,X_t^n)\right)^\zeta\left(1\wedge
d(X_{t}^n,X_{t-v}^n)\right)^\zeta|\mathcal F_t\right]\leq\tilde{\mathbb E}\left[\gamma^n(\Delta)|\mathcal
F_t\right]
\end{equation}
for $0\leq t\leq T'$, $0\leq u\leq\Delta$ and $0\leq v\leq\Delta\wedge t$;
\item
For $\gamma^n(\Delta)$, we have 
\begin{equation}\label{eq.condition3}
\lim_{\Delta\rightarrow0}\limsup_{n\rightarrow\infty}\tilde{\mathbb E}\left[\gamma^n(\Delta)\right]=0;
\end{equation}
\item
At the initial time
\begin{equation}\label{eq.condition4}
\lim_{\Delta\rightarrow0}\limsup_{n\rightarrow\infty}\tilde{\mathbb E}\left[\left(1\wedge
d(X_{\Delta}^n,X_0^n)\right)^\zeta\right]=0.
\end{equation}
\end{itemize}
\end{theorem}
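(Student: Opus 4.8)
The plan is to recognise this statement as a standard relative-compactness (tightness) criterion in the Skorokhod space and to prove it by reducing it to the two ingredients that characterise relatively compact subsets of $D_E[0,\infty)$ (as in \cite{Ethier and Kurtz}). Since $(E,d)$ is complete and separable, so is $D_E[0,\infty)$ under the Skorokhod metric, and hence by Prohorov's theorem the family $\{X^n\}$ is relatively compact if and only if the associated sequence of laws is tight. I would therefore work with the intrinsic characterisation of compactness in $D_E[0,\infty)$: a set is relatively compact precisely when (i) for each rational $t$ the values $\{x(t)\}$ lie in a relatively compact subset of $E$, and (ii) the modified c\`adl\`ag modulus of continuity $w'(x,\Delta,T)$ is uniformly small as $\Delta\to0$. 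The entire argument consists in extracting (i) and (ii) from the hypotheses \eqref{eq.condition1}--\eqref{eq.condition4}.

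For the substantive \emph{if} direction, condition \eqref{eq.condition1} supplies ingredient (i) directly: it is exactly the compact-containment statement at each fixed time. The work lies in ingredient (ii), namely showing $\lim_{\Delta\to0}\limsup_{n}\tilde{\mathbb E}\bigl[1\wedge w'(X^n,\Delta,T)\bigr]=0$, after which Markov's inequality together with the compact-set characterisation yields tightness of the laws. The role of the product structure in \eqref{eq.condition2} is the crux here: bounding the increment $d(X_{t+u}^n,X_t^n)$ after time $t$ simultaneously with the increment $d(X_t^n,X_{t-v}^n)$ before time $t$ is precisely what distinguishes a genuine oscillation straddling a point (which must be ruled out) from an isolated jump (which c\`adl\`ag paths are allowed, and which $w'$ tolerates). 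I would turn \eqref{eq.condition2} into a bound on the probability that two large increments straddle some point inside a window of width $\Delta$, with $\tilde{\mathbb E}[\gamma^n(\Delta)\mid\mathcal F_t]$ serving as the conditional control and \eqref{eq.condition3} forcing this quantity to vanish; condition \eqref{eq.condition4} then handles the left endpoint $t=0$, where the two-sided bound degenerates because there is no increment ``before'' time $0$.

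The main obstacle is exactly this passage from the conditional two-sided moment bound \eqref{eq.condition2} to a uniform-in-partition control of $w'$. This is the technical heart of the argument and cannot be bypassed by soft reasoning: it requires a stopping-time/chaining construction in which one introduces successive times at which the path has moved by more than a fixed threshold, applies the conditional bound at each such time, and sums the resulting estimates by a maximal inequality to produce an adapted partition whose blocks have small oscillation. I would follow the preparatory lemmas that dominate $w'$ by maxima of increments over such partitions, so that \eqref{eq.condition2}--\eqref{eq.condition4} feed directly into these maxima and control the modulus with high probability uniformly in $n$.

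Finally, the \emph{only if} direction is comparatively routine and I would dispatch it afterwards: relative compactness gives tightness, from which compact containment \eqref{eq.condition1} is immediate at each rational time, and a candidate family $\{\gamma^n(\Delta)\}$ satisfying \eqref{eq.condition2}--\eqref{eq.condition4} is constructed from the modulus $w'$ of the sample paths themselves, using that uniform smallness of $w'$ across a relatively compact set of paths transfers to the required (conditional) expectations. Assembling both directions yields the stated equivalence.
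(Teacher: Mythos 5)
First, a point of comparison that matters here: the paper does not prove this statement at all. It is imported verbatim as a known criterion, with the attribution ``which can be found in \cite{Ethier and Kurtz}'', and the paper's work consists only in \emph{verifying} the hypotheses \eqref{eq.condition1}--\eqref{eq.condition4} for $\{U^n(\tilde f_k)\}_n$ in Lemma \ref{lem.needed_for_tightness} and Propositions \ref{prop.verify_condition2}--\ref{prop.verify_condition4}. So there is no in-paper proof to match your argument against; the relevant benchmark is the proof of the corresponding results in Ethier--Kurtz (Theorems 3.7.2, 3.8.6 and the surrounding lemmas of Chapter 3), and your outline does follow that standard route: Prohorov plus the intrinsic characterisation of compact subsets of $D_E[0,\infty)$ via compact containment and the c\`adl\`ag modulus $w'$, with the product structure of \eqref{eq.condition2} correctly identified as the device that tolerates isolated jumps while excluding oscillations that straddle a point.

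That said, as a proof your proposal has two genuine gaps. The first you name yourself and then defer: the passage from the conditional two-sided moment bound \eqref{eq.condition2} together with \eqref{eq.condition3} to the statement $\lim_{\Delta\to0}\limsup_n\tilde{\mathbb E}[1\wedge w'(X^n,\Delta,T)]=0$ is the entire content of the forward implication, and ``a stopping-time/chaining construction \dots following the preparatory lemmas'' is a description of where the proof lives, not the proof. Without actually constructing the adapted partition (successive hitting times of an oscillation threshold), applying \eqref{eq.condition2} at those stopping times, and summing via a maximal inequality, the implication is asserted rather than established. The second gap is one you pass over: \eqref{eq.condition1} gives compact containment only at each \emph{fixed} rational $t$, whereas tightness in $D_E[0,T]$ requires that with high probability the whole path up to time $T$ stays in a compact set. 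Upgrading from fixed-time containment to uniform-in-time containment is itself a step that uses the modulus control (one covers $[0,T]$ by finitely many rationals and uses $w'$ to interpolate), and your sketch treats \eqref{eq.condition1} as if it ``supplies ingredient (i) directly''. Neither gap reflects a wrong idea --- the architecture is the correct one --- but both are exactly the places where the work of the theorem is done, so the proposal should be read as an accurate roadmap to the Ethier--Kurtz proof rather than a self-contained argument.
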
\
To justify \eqref{eq.condition1}, we need to prove the following lemma:
\begin{lemma}\label{lem.needed_for_tightness}
For all $\eta>0$, there exists a constant $\bar\beta$ such that for the associated
probabilities $\{\pi_{\tilde f_k}\tilde{\mathbb P}_n\}$ of $\{\pi_{\tilde
f_k}U^n\}$ and $A=\{x\in D_{\overline{\mathbb R}}[0,T]:\sup_{t\in[0,T]}|x(t)|>\bar\beta\}$,
we have
\begin{equation}
\pi_{\tilde f_k}\tilde{\mathbb P}_n(A)\leq\eta.
\end{equation}
\end{lemma}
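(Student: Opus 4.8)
The plan is to reduce the statement to a uniform second-moment bound and then invoke Markov's inequality. Since $\pi_{\tilde f_k}\tilde{\mathbb P}_n$ is the law of the real-valued process $U^n(\tilde f_k)$, writing $\varphi=\tilde f_k$ we have
\begin{equation}
\pi_{\tilde f_k}\tilde{\mathbb P}_n(A)=\tilde{\mathbb P}_n\Big(\sup_{t\in[0,T]}|U_t^n(\varphi)|>\bar\beta\Big)\leq\frac{1}{\bar\beta^2}\,\tilde{\mathbb E}\Big[\sup_{t\in[0,T]}|U_t^n(\varphi)|^2\Big].\nonumber
\end{equation}
Hence it suffices to prove $\sup_n\tilde{\mathbb E}\big[\sup_{t\in[0,T]}|U_t^n(\varphi)|^2\big]\leq C$ for a constant $C=C(T,\varphi)$ independent of $n$; the lemma then follows on taking $\bar\beta=\sqrt{C/\eta}$. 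Note that since $\overline{\mathbb R}$ is compact the abstract compact-containment condition \eqref{eq.condition1} is automatic; the real role of this estimate is to guarantee that no mass of the limiting process escapes to the added point at infinity.

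First I would decompose $U_t^n(\varphi)$ using the evolution equation \eqref{eq.evolution_for_U^n} into its five constituents and bound $\tilde{\mathbb E}[\sup_{t\le T}|\cdot|^2]$ of each, using $(\sum_{i=1}^5 x_i)^2\le 5\sum_i x_i^2$. The initial term $U_0^n(\varphi)$ is controlled by the quality of the initial Gaussian-mixture approximation, i.e. the $t=0$ instance of the $L^2$-rate estimate of \cite{Crisan and Li 2}. For the drift $\int_0^t U_s^n(A\varphi)\,ds$ I would apply Cauchy--Schwarz in $s$, and for the It\^o integral $\int_0^t U_s^n(h\varphi)\,dY_s$ (a $\tilde{\mathbb P}$-martingale, as $Y$ is a $\tilde{\mathbb P}$-Brownian motion) I would apply Doob's maximal inequality and the It\^o isometry. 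Both reduce to the pointwise bound
\begin{equation}
\sup_n\sup_{s\in[0,T]}\tilde{\mathbb E}\big[|U_s^n(\psi)|^2\big]\leq C\|\psi\|_{6,\infty}^2,\qquad \psi\in C_b^6(\mathbb R),\nonumber
\end{equation}
which is exactly the $L^2$-convergence rate of \cite{Crisan and Li 2} rescaled by $n^{2\varepsilon}=n$ at $\varepsilon=1/2$. Since $\|A\varphi\|_{6,\infty}$ and $\|h\varphi\|_{6,\infty}$ are both controlled by $\|\varphi\|_{6,\infty}$ (as $h\in C_b^2$, $\varphi\in C_b^6$), this closes those two terms without any Gronwall argument, the pointwise $L^2$ bound already holding for every admissible test function.

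Next I would treat the remainder $n^\varepsilon B_t^{n,\varphi}$ via the explicit forms \eqref{eq.R^1}--\eqref{eq.R^3}, together with $\omega_j^n(s)=\alpha(\beta+\int\sigma^2)=O(n^{-\varepsilon})$ and the moment bounds \eqref{eq.bound_for_xi_2} for $\xi^n a_j^n$. The $R^1\,ds$ and $R^2\,dY$ contributions carry a factor $\omega_j^n=O(n^{-\varepsilon})$, so after Cauchy--Schwarz across the $n$ particles (costing a factor $n$) and \eqref{eq.bound_for_xi_2} the prefactor $n^{\varepsilon-1}$ yields an $O(1)$ second moment, with Doob moving the supremum inside for the $dY$ part. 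The delicate contribution is $R^3\,dV^{(j)}$, whose leading term $\sqrt{1-\alpha}\,\sigma\varphi'$ is \emph{not} small: a blunt Cauchy--Schwarz would produce an unacceptable factor $n^{2\varepsilon}$. Instead I would exploit that $\{V^{(j)}\}_{j=1}^n$ are mutually independent, so cross terms in the quadratic variation vanish and
\begin{equation}
\tilde{\mathbb E}\Big[\Big|n^{\varepsilon-1}\sum_{j=1}^n\int_0^t\xi_{[s/\delta]\delta}^na_j^n(s)R_{s,j}^3(\varphi)\,dV_s^{(j)}\Big|^2\Big]=n^{2\varepsilon-2}\sum_{j=1}^n\int_0^t\tilde{\mathbb E}\big[(\xi_{[s/\delta]\delta}^n a_j^n R_{s,j}^3)^2\big]\,ds=O(n^{2\varepsilon-1}),\nonumber
\end{equation}
which is $O(1)$ precisely at $\varepsilon=1/2$; Doob/BDG then handle the supremum.

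The main obstacle, and the place where the threshold $\varepsilon=1/2$ is genuinely sharp, is the correction (resampling) martingale $n^\varepsilon M_{[t/\delta]}^{n,\varphi}$ of \eqref{eq.M_n_varepsilon}. This is a discrete-time $\tilde{\mathbb P}$-martingale in the index $i=[t/\delta]$, so Doob's maximal inequality gives $\tilde{\mathbb E}[\sup_{t\le T}|n^\varepsilon M_{[t/\delta]}^{n,\varphi}|^2]\le 4\,\tilde{\mathbb E}[|n^\varepsilon M_{[T/\delta]}^{n,\varphi}|^2]$, and it remains to bound the latter uniformly in $n$. Following the computation of Lemma 4.7 in \cite{Crisan and Li 2}, one evaluates the conditional variance of each multinomial resampling step: the offspring counts $o_j^{n,i\delta}$ fluctuate by $O(n)$ about $n\bar a_j^n(i\delta-)$, so after the rescaling $n^{\varepsilon-1}$ the per-step second moment is $O(n^{2\varepsilon-1})$, summing over the $N=T/\delta$ correction times to $O(n^{2\varepsilon-1})$, again $O(1)$ at $\varepsilon=1/2$ and divergent for $\varepsilon>1/2$. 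Collecting the five bounds yields the desired uniform constant $C$, completing the proof. I expect this resampling term to be the crux: it is the only contribution whose control hinges on a careful second-moment analysis of the multinomial branching, and it is precisely what forces the admissible range $\varepsilon\in(0,1/2]$.
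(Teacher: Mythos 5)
Your proposal is correct, and its opening move coincides exactly with the paper's: identify $\pi_{\tilde f_k}\tilde{\mathbb P}_n(A)$ with $\tilde{\mathbb P}\bigl(\sup_{t\in[0,T]}|\sqrt n(\rho_t^n(\tilde f_k)-\rho_t(\tilde f_k))|>\bar\beta\bigr)$ and apply Chebyshev's inequality, reducing everything to a bound on $\Lambda_T^n(\tilde f_k)=\tilde{\mathbb E}\bigl[\sup_t\bigl(\sqrt n(\rho_t^n(\tilde f_k)-\rho_t(\tilde f_k))\bigr)^2\bigr]$ that is uniform in $n$. The divergence is in what happens next: the paper disposes of this uniform bound in one line by citing Jensen's inequality together with Theorem 4.18 of \cite{Crisan and Li 2}, whereas you re-derive it from the evolution equation \eqref{eq.evolution_for_U^n}, bounding each of the five constituents (initial error, drift, $dY$-integral, the $R^1,R^2,R^3$ remainders, and the resampling martingale) with Doob, It\^o isometry, the independence of the $V^{(j)}$, and the multinomial conditional variance. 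Your route is essentially an unpacking of the cited theorem; it buys a self-contained argument that makes visible exactly where the exponent $\varepsilon=1/2$ is sharp (the $R^3$ and resampling terms, as you correctly flag), at the cost of partially re-proving a result the paper treats as an input --- note that your drift and $dY$ estimates still lean on the pointwise $L^2$ rate from \cite{Crisan and Li 2}, so the external dependence is not fully eliminated. Two small points: your claim that $\|h\varphi\|_{6,\infty}$ is controlled is too strong, since $h\in C_b^2$ only gives control of $\|h\varphi\|_{2,\infty}$ (harmless, as the rate estimate is applied with a lower-order norm, and the paper itself writes $\|h\tilde f_k\|_{m+2,\infty}$); and your choice $\bar\beta=\sqrt{C/\eta}$ is the correct orientation of the final step, whereas the paper's displayed choice $\bar\beta^2=\eta/\Lambda_T^n(\tilde f_k)$ has the ratio inverted (an evident typo).
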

\begin{proof}
Note that $\pi_{\tilde f_k}U_t^n=U_t^n(\tilde f_k)$, so that
\begin{align}
\pi_{\tilde f_k}\tilde{\mathbb P}_n(A)&=\tilde{\mathbb P}_n\pi_{\tilde f_k}^{-1}(A)
=\tilde{\mathbb P}_n\left(U^n\in D_{\mathcal M_F}[0,T]:\sup_{t}|U_t^n(\tilde
f_k)|>\bar\beta\right)\nonumber\\
&=\tilde{\mathbb P}_n\left(U^n\in D_{\mathcal M_F}[0,T]:\sup_{t}|\sqrt{n}(\rho_t^n(\tilde
f_k)-\rho_t(\tilde f_k))|>\bar\beta\right)
\leq\frac{\Lambda_T^n(\tilde f_k)}{\bar\beta^2},
\end{align}
where $\Lambda_T^n(\tilde f_k)=\tilde{\mathbb E}\left[\sup_t\left(\sqrt n(\rho_t^n(\tilde
f_k)-\rho_t(\tilde f_k))\right)^2\right]$.

It suffices to show that $\Lambda_T^n(\tilde f_k)$ is bounded above by a
constant independent of $n$, which is an immediate consequence of Jensen's
inequality and Theorem 4.18 in \cite{Crisan and Li 2}. Then we choose
$$\bar\beta^2=\frac{\eta}{\Lambda_T^n(\tilde f_k)}$$
and the proof is complete.
\end{proof}

In order to prove the tightness of $\{U^n(\tilde f_k)\}_n$, we need to
show that $\{U^n(\tilde f_k)\}_n$ satisfies \eqref{eq.condition2}, \eqref{eq.condition3} and \eqref{eq.condition4}. We prove these by showing that each
of the increments of the process appearing on the right hand side of \eqref{eq.evolution_for_U^n}
satisfies similar bounds.

In the following we will choose $\Delta$ to be sufficiently small. To be
specific, we let $\Delta<\frac{\delta}{2}$, where $\delta$ is the time length
between two resampling events. This ensures that either $[t-\Delta,t]$ or
$[t,t+\Delta]$ does not contain a resampling event, in other words, there
is at most one resampling event in $[t,t+u]$ and $[t-v,t]$, where $0\leq
u\leq\Delta$ and $0\leq v\leq\Delta\wedge t$.

If the resampling happens only in the interval $[t-v,t]$, and obtain
\begin{align}
&\tilde{\mathbb E}\left[\left(1\wedge d(X_{t+u}^n,X_t^n)\right)^\zeta\left(1\wedge
d(X_{t}^n,X_{t-v}^n)\right)^\zeta|\mathcal F_t\right]\nonumber
\leq\tilde{\mathbb E}\left[\left(1\wedge d(X_{t+u}^n,X_t^n)\right)^\zeta|\mathcal
F_t\right].
\end{align}

Therefore in order to determine $\gamma^n(\Delta)$ and shows that \eqref{eq.condition2}
is satisfied by $\{U^n(\tilde f_k)\}_n$, it suffices to find an appropriate
$\gamma^n(\Delta)$ for $\zeta=2$ and show that
\begin{equation}
\tilde{\mathbb E}\left[\left(1\wedge d(U^n_{t+u}(\tilde f_k),U^n_t(\tilde
f_k)\right)^2|\mathcal
F_t\right]\leq\tilde{\mathbb E}\left[\gamma^n(\Delta)|\mathcal
F_t\right].
\end{equation}
This will be done in the following proposition.
\begin{proposition}\label{prop.verify_condition2}
Let $k\in\mathbb N$, and we further assume that $\tilde f_k\in C_b^6(\mathbb
R)$, and Assumption (A) holds. Let the length between two resampling events
$\delta$ be fixed and let $\alpha\propto\frac{1}{\sqrt n}$.
Define the family $\{\gamma_u^n(\Delta):0<\Delta<1\}$
of non-negative random variables
\begin{align}\label{eq.gamma_n_Delta}
\gamma^n(\Delta)\triangleq3&n\Delta^2\sup_{s\in[t,t+u]}\left(\rho_s^n(A\tilde
f_k)-\rho_s(A\tilde f_k)\right)^2+3n\Delta\sup_{s\in[t,t+u]}\left(\rho_s^n(h\tilde
f_k)-\rho_s(h\tilde f_k)\right)^2\nonumber\\
+&\frac{3\Delta}{n}C_{\gamma}\|\tilde f_k\|_{6,\infty}^2\sum_{j=1}^n\sup_{s\in[t,t+u]}\Big(\xi_{i\delta}^na_j^n(s)
\Big)^2,
\end{align}
where $C_\gamma$ is a constant independent of $n$. By Theorem 4.18 in \cite{Crisan and Li 2},
we know that 
$$\sup_{s\in[t,t+u]}n\left(\rho_s^n(A\tilde
f_k)-\rho_s(A\tilde f_k)\right)^2\ \text{and}\ \sup_{s\in[t,t+u]}n\left(\rho_s^n(h\tilde
f_k)-\rho_s(h\tilde f_k)\right)^2$$ are bounded and independent of $\Delta$.
Then we have
\begin{equation}\label{eq.bound_for_gamma_u}
\tilde{\mathbb E}\left[1\wedge d(U^n_{t+u}(\tilde f_k),U^n_t(\tilde f_k))^2|\mathcal
F_t\right]\leq\tilde{\mathbb
E}\left[\gamma^n(\Delta)|\mathcal F_t\right].
\end{equation}
\end{proposition}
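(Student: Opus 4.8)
The plan is to start from the evolution equation \eqref{eq.evolution_for_U^n} for $U^n(\tilde f_k)$ (with $\varepsilon=1/2$, so that $n^\varepsilon=\sqrt n$ and $\alpha=n^{-1/2}$) and form the increment over $[t,t+u]$. Since the choice $\Delta<\delta/2$ guarantees that $[t,t+u]$ contains no resampling time, the index $[s/\delta]$ is constant on this interval, so the jump term $n^\varepsilon M^{n,\tilde f_k}_{[s/\delta]}$ does not change and cancels in the difference. Moreover $1\wedge d(\cdot,\cdot)^2$ is dominated by the squared Euclidean increment $|U^n_{t+u}(\tilde f_k)-U^n_t(\tilde f_k)|^2$ (taking, as one may, a metric on $\overline{\mathbb R}$ that is $1$-Lipschitz relative to the Euclidean distance), so it suffices to bound the conditional second moment of
$$
U^n_{t+u}(\tilde f_k)-U^n_t(\tilde f_k)=\int_t^{t+u}U^n_s(A\tilde f_k)\,ds+\int_t^{t+u}U^n_s(h\tilde f_k)\,dY_s+\sqrt n\bigl(B^{n,\tilde f_k}_{t+u}-B^{n,\tilde f_k}_t\bigr).
$$
Applying $(a+b+c)^2\le 3(a^2+b^2+c^2)$ produces the three summands of \eqref{eq.gamma_n_Delta}, each carrying its leading factor $3$; the remaining task is to match each piece with the corresponding term of $\gamma^n(\Delta)$.

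The first two pieces are routine. For the drift, Cauchy--Schwarz in time together with $U^n_s=\sqrt n(\rho^n_s-\rho_s)$ and $u\le\Delta$ gives
$$
\Bigl(\int_t^{t+u}U^n_s(A\tilde f_k)\,ds\Bigr)^2\le u^2\sup_{s\in[t,t+u]}U^n_s(A\tilde f_k)^2=\Delta^2\,n\sup_{s\in[t,t+u]}\bigl(\rho^n_s(A\tilde f_k)-\rho_s(A\tilde f_k)\bigr)^2,
$$
which is the first term of $\gamma^n(\Delta)$. For the $dY$ integral, since $Y$ is a Brownian motion under $\tilde{\mathbb P}$, the conditional It\^o isometry followed by the same supremum estimate yields the bound $\Delta\,n\sup_s(\rho^n_s(h\tilde f_k)-\rho_s(h\tilde f_k))^2$, the second term. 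Theorem 4.18 in \cite{Crisan and Li 2} then guarantees that the two suprema are finite and independent of $\Delta$, as asserted in the statement.

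The core of the argument --- and the main obstacle --- is the $B$-increment $\sqrt n(B^{n,\tilde f_k}_{t+u}-B^{n,\tilde f_k}_t)=n^{-1/2}\sum_{j=1}^n\int_t^{t+u}\xi^n_{[s/\delta]\delta}a^n_j(s)[R^1_{s,j}\,ds+R^2_{s,j}\,dY_s+R^3_{s,j}\,dV^{(j)}_s]$, where the powers of $n$ must be tracked carefully. The decisive structural fact is the scaling of the variance: with $\alpha=n^{-1/2}$ one has $\omega^n_j(s)=\alpha(\beta+\int\sigma^2)=O(n^{-1/2})$, so from \eqref{eq.R^1}--\eqref{eq.R^2} every term of $R^1_{s,j}$ and $R^2_{s,j}$ carries at least one factor $\omega^n_j$, giving $(R^1_{s,j})^2,(R^2_{s,j})^2=O(n^{-1})$ with constants controlled by $\|\tilde f_k\|_{6,\infty}^2$, whereas the leading term $\sqrt{1-\alpha}\,\sigma\tilde f_k'$ of $R^3_{s,j}$ in \eqref{eq.R^3} is only $O(1)$. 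I would treat the three integrals separately. For the $R^3$ part I would exploit that $\{V^{(j)}\}$ are mutually independent, so the cross terms vanish and the conditional It\^o isometry produces the pure diagonal sum $n^{-1}\sum_j\tilde{\mathbb E}[\int_t^{t+u}(\xi^n_{i\delta}a^n_j R^3_{s,j})^2\,ds\mid\mathcal F_t]$; here the $O(1)$ size of $R^3$ is harmless because the prefactor $n^{-1}$ and $\int_t^{t+u}ds\le\Delta$ already deliver the third term of $\gamma^n(\Delta)$.

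For the $R^1$ and $R^2$ parts there is no independence to exploit, so I would instead apply Cauchy--Schwarz across the $n$-sum, which costs a factor $n$; this loss is compensated exactly by the $O(n^{-1})$ smallness of $(R^1)^2$ and $(R^2)^2$ coming from the variance scaling, and together with $u^2\le u\le\Delta$ (for the $ds$ integral) and the conditional It\^o isometry (for the $dY$ integral, using that $Y$ is common to all particles) each lands in the same third term $\tfrac{3\Delta}{n}C_\gamma\|\tilde f_k\|_{6,\infty}^2\sum_j\sup_s(\xi^n_{i\delta}a^n_j(s))^2$, with $C_\gamma$ absorbing the numerical and sup-norm constants. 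Collecting the three estimates yields \eqref{eq.bound_for_gamma_u}. The delicate point throughout is that the choice $\alpha\propto n^{-1/2}$ (that is, $\varepsilon=1/2$) is precisely what balances the factor-$n$ loss from Cauchy--Schwarz against the factor-$n^{-1}$ gain from the variances, so that every contribution of $B$ stays $O(\Delta/n)\sum_j(\cdots)$ rather than diverging; checking this balance term-by-term in $R^1,R^2,R^3$ is where the real care is needed.
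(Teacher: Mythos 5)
Your proposal is correct and follows essentially the same route as the paper's proof: cancel the resampling term $M^{n,\tilde f_k}$ on $[t,t+u]$, dominate $1\wedge d(\cdot,\cdot)^2$ by the squared increment, split via $(a+b+c)^2\le 3(a^2+b^2+c^2)$, and bound the $R^1,R^2$ contributions by Cauchy--Schwarz across the $j$-sum compensated by $(R^{1})^2,(R^{2})^2=O(1/n)$ from $\omega_j^n=O(n^{-1/2})$, while the $R^3$ contribution uses the mutual independence of the $V^{(j)}$. Your explicit accounting of the $n$ versus $1/n$ balance is exactly the mechanism behind the paper's estimates \eqref{eq.bound4_for_U}--\eqref{eq.bound6_for_U}.
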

\begin{proof}
Bearing in mind that there is no resampling event within $[t,t+u]$, thus
$[(t+u)/\delta]=[t/\delta]$ and
$$M_{[(t+u)/\delta]}^{n,\tilde f_k}-M_{[t/\delta]}^{n,\tilde f_k}=0.$$ 
Therefor we have that
\begin{align}\label{eq.bound_for_each_term}
\tilde{\mathbb{E}}&\left[1\wedge d(U^n_{t+u}(\tilde f_k),U^n_t(\tilde f_k))^2\big|\mathcal
F_t\right]
\leq\tilde{\mathbb E}\left[|U^n_{t+u}(\tilde f_k)-U^n_t(\tilde f_k)|^2\big
|\mathcal F_t\right]\nonumber\\
=\tilde{\mathbb E}&\left[\left|\sqrt n\left(\left(\rho^n_{t+u}(\tilde f_k)-\rho_{t+u}(\tilde
f_k)\right)-\left(\rho^n_t(\tilde f_k)-\rho_t(\tilde f_k)\right)\right)\right|^2\Bigg|\mathcal
F_t\right]\nonumber\\
\leq3 n&\Bigg\{\tilde{\mathbb E}\left[\left(\int_t^{t+u}(\rho_s^n(A\tilde
f_k)-\rho_s(A\tilde f_k))ds\right)^2\Bigg |\mathcal F_t\right]
+\tilde{\mathbb E}\left[\left(\int_t^{t+u}(\rho_s^n(h\tilde f_k)-\rho_s(h\tilde
f_k))dY_s\right)^2\Bigg |\mathcal F_t\right]\nonumber\\
&+\frac{1}{n^2}\tilde{\mathbb E}\left[\left(\sum_{j=1}^n\int_t^{t+u}\xi_{i\delta}^na_j^n(s)\Big[
R_{s,j}^1(\tilde f_k)ds+R_{s,j}^2(\tilde f_k)dY_s+R_{s,j}^3(\tilde f_k)dV_s^{(j)}\Big]\right)^2\Bigg|\mathcal
F_t\right]\Bigg\}.
\end{align}
We examine each of the terms in \eqref{eq.bound_for_each_term} and observe
the following:\\
For the first term in \eqref{eq.bound_for_each_term}, by Jensen's inequality,
we have
\begin{align}\label{eq.bound1_for_U}
&\tilde{\mathbb E}\left[\left(\sqrt n\int_t^{t+u}(\rho_s^n(A\tilde
f_k)-\rho_s(A\tilde f_k))ds\right)^2\Bigg |\mathcal F_t\right]
\leq nu^2\sup_{s\in[t,t+u]}\tilde{\mathbb E}\left[\left(\rho_s^n(A\tilde
f_k)-\rho_s(A\tilde f_k\right)^2\Big|\mathcal F_t\right].
\end{align}
For the second term in \eqref{eq.bound_for_each_term}, 
\begin{align}\label{eq.bound2_for_U}
&\tilde{\mathbb E}\left[\left(\int_t^{t+u}\sqrt n\left(\rho_s^n(h\tilde f_k)-\rho_s(h\tilde
f_k)\right)dY_s\right)^2\Bigg |\mathcal F_t\right]
\leq un\sup_{s\in[t,t+u]}\tilde{\mathbb E}\left[\left(\rho_s^n(h\tilde
f_k)-\rho_s(h\tilde f_k)\right)^2\Big|\mathcal F_t\right].
\end{align}

For the remaining terms in \eqref{eq.bound_for_each_term}, note that 
$$R_{s,j}^1(\tilde f_k)\leq C_1\alpha\delta\|\tilde f_k\|_{6,\infty}\leq
\frac{C_1}{n}\|\tilde f_k\|_{6,\infty},$$
we then have
\begin{align}\label{eq.bound4_for_U}
&n\frac{1}{n^2}\tilde{\mathbb E}\left[\left(\sum_{j=1}^n\int_t^{t+u}\xi_{i\delta}^na_j^n(s)\Big[
R_{s,j}^1(\tilde f_k)ds\Big]\right)^2\Bigg|\mathcal
F_t\right]
\leq u\frac{C_1^2}{n}\|\tilde f_k\|_{6,\infty}^2\sum_{j=1}^n\sup_{s\in[t,t+u]}\tilde{\mathbb
E}\left[\left(\xi_{i\delta}^na_j^n(s)
\right)^2\Big|\mathcal F_t\right];
\end{align}
and also note that
$$R_{s,j}^2(\tilde f_k)\leq C_2\alpha\delta\|\tilde f_k\|_{4,\infty}\leq\frac{C_2}{n}\|\tilde
f_k\|_{4,\infty},$$
then we have
\begin{align}\label{eq.bound5_for_U}
&n\frac{1}{n^2}\tilde{\mathbb E}\left[\left(\sum_{j=1}^n\int_t^{t+u}\xi_{i\delta}^na_j^n(s)R_{s,j}^2(\varphi)dY_s\right)^2\Bigg|\mathcal
F_t\right]
\leq u\frac{C_2^2}{n}\|\tilde f_k\|_{4,\infty}^2\sum_{j=1}^n\sup_{s\in[t,t+u]}\tilde{\mathbb
E}\left[\Big(\xi_{i\delta}^na_j^n(s)\Big)^2\Big|\mathcal
F_t\right];
\end{align}

and finally since
$$R_{s,j}^3(\tilde f_k)\leq(C_0+C_3\alpha\delta)\|\tilde f_k\|_{5,\infty}\leq(C_0+C_3)\|\tilde
f_k\|_{5,\infty},$$
we have that
\begin{align}\label{eq.bound6_for_U}
&n\frac{1}{n^2}\tilde{\mathbb E}\left[\left(\sum_{j=1}^n\int_t^{t+u}\xi_{i\delta}^na_j^n(s)
R_{s,j}^3(\varphi)dV_s^{(j)}\right)^2\Bigg|\mathcal F_t\right]
\leq\frac{u}{n}(C_0+C_3)^2\|\tilde f_k\|_{5,\infty}^2\sum_{j=1}^n\sup_{s\in[t,t+u]}\tilde{\mathbb
E}\left[\Big(\xi_{i\delta}^na_j^n(s)\Big)^2\Big|\mathcal F_t\right].
\end{align}
Therefore, considering the bounds in the right hand sides of \eqref{eq.bound1_for_U},
\eqref{eq.bound2_for_U}, \eqref{eq.bound4_for_U}, \eqref{eq.bound5_for_U},
and \eqref{eq.bound6_for_U}; we can define $\gamma^n(\Delta)$ as in \eqref{eq.gamma_n_Delta}
by letting
$$C_{\gamma}=C_1^2+C_2^2+(C_0+C_3)^2.$$ By virtue of \eqref{eq.bound_for_each_term},
we know that \eqref{eq.bound_for_gamma_u} is satisfied.
\end{proof}

The above discussion defines $\gamma^n(\Delta)$ and shows that \eqref{eq.condition2}
is satisfied for $\{U^n(\tilde f_k)\}_n$.
The following proposition shows that $\gamma^n(\Delta)$ defined in \eqref{eq.gamma_n_Delta}
satisfies \eqref{eq.condition3}.
\begin{proposition}\label{prop.verify_condition3}
$\gamma^n(\Delta)$ defined in \eqref{eq.gamma_n_Delta} has the following
property
\begin{equation}\label{eq.limit_of_gamma_u_n}
\lim_{\Delta\rightarrow0}\limsup_{n\rightarrow\infty}\tilde{\mathbb E}\left[\gamma^n(\Delta)\right]
=0.
\end{equation}
\end{proposition}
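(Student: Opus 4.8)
The plan is to bound $\tilde{\mathbb E}[\gamma^n(\Delta)]$ by controlling the expectation of each of the three summands in \eqref{eq.gamma_n_Delta} separately, showing that each is dominated by a constant (independent of $n$) multiplied by a strictly positive power of $\Delta$. Since the resulting bounds do not depend on $n$, taking $\limsup_{n\to\infty}$ leaves them unchanged, and letting $\Delta\to0$ sends them to zero; this is exactly \eqref{eq.limit_of_gamma_u_n}. By linearity of the expectation, it suffices to treat the three summands one by one.

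For the first two summands I would invoke Theorem 4.18 in \cite{Crisan and Li 2}, which, as recorded in the statement of the Proposition, guarantees that $\tilde{\mathbb E}[\sup_{s\in[t,t+u]}n(\rho_s^n(A\tilde f_k)-\rho_s(A\tilde f_k))^2]$ and $\tilde{\mathbb E}[\sup_{s\in[t,t+u]}n(\rho_s^n(h\tilde f_k)-\rho_s(h\tilde f_k))^2]$ are bounded by constants $C_A$ and $C_h$ that depend neither on $n$ nor on $\Delta$. Consequently the expectation of the first summand is at most $3C_A\Delta^2$ and that of the second is at most $3C_h\Delta$, and both vanish as $\Delta\to0$ uniformly in $n$.

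The genuine work lies in the third summand. After pulling out the deterministic constants, I must bound $\sum_{j=1}^n\tilde{\mathbb E}[\sup_{s\in[t,t+u]}(\xi_{i\delta}^na_j^n(s))^2]$ by a multiple of $n$, uniformly in $n$, so that the $1/n$ prefactor produces an $O(\Delta)$ bound. The key structural observation is that, because $\Delta<\delta/2$ has been chosen so that the subinterval $[t,t+u]$ contains no resampling event with $i=[t/\delta]=[s/\delta]$, the factor $\xi_{i\delta}^n$ is $\mathcal F_{i\delta}$-measurable and hence constant over $[t,t+u]$, while $s\mapsto a_j^n(s)$ solves the linear equation in \eqref{eq.appro_with_variance} driven by $Y$, which is a Brownian motion under $\tilde{\mathbb P}$. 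Thus $s\mapsto\xi_{i\delta}^na_j^n(s)$ is a nonnegative $\tilde{\mathbb P}$-martingale on $[t,t+u]$, and Doob's $L^2$-maximal inequality yields $\tilde{\mathbb E}[\sup_{s\in[t,t+u]}(\xi_{i\delta}^na_j^n(s))^2]\leq 4\,\tilde{\mathbb E}[(\xi_{i\delta}^na_j^n(t+u))^2]$. It then remains to dominate the right-hand side uniformly in $j$ and $n$ by the moment bound \eqref{eq.bound_for_xi_2}; summing over $j$ cancels the $1/n$ factor and leaves a bound of the form $12\,C_\gamma\|\tilde f_k\|_{6,\infty}^2\,c_2^{T,2}\,\Delta$.

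I expect the main obstacle to be precisely this last manipulation in the third summand: one must verify carefully that $\xi^n a_j^n$ really is a martingale on the resampling-free subinterval (so that the maximal inequality applies) and that interchanging the supremum with the expectation does not degrade the uniform-in-$n$ moment estimate supplied by \eqref{eq.bound_for_xi_2}. Both follow from the exponential-martingale structure of the weights $a_j^n$ under $\tilde{\mathbb P}$ together with the already-established moment bounds for $\xi^n$. Collecting the three estimates gives $\tilde{\mathbb E}[\gamma^n(\Delta)]\leq 3C_A\Delta^2+(3C_h+12\,C_\gamma\|\tilde f_k\|_{6,\infty}^2\,c_2^{T,2})\Delta$, and \eqref{eq.limit_of_gamma_u_n} follows upon taking $\limsup_{n\to\infty}$ and then $\Delta\to0$.
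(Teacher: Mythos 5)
Your proposal is correct and follows essentially the same route as the paper: both treat the three summands separately, invoke Theorem 4.18 of \cite{Crisan and Li 2} for the first two, and reduce the third to the moment bound \eqref{eq.bound_for_xi_2} so that the $1/n$ prefactor cancels the sum over $j$ and leaves an $O(\Delta)$ bound uniform in $n$. Your extra step of justifying the interchange of supremum and expectation via the martingale property of $\xi_{i\delta}^n a_j^n(\cdot)$ on the resampling-free interval and Doob's $L^2$ maximal inequality is a point the paper's proof passes over silently, so if anything your argument is slightly more careful at that spot.
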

\begin{proof}
We show this by looking at the expectation of each term in \eqref{eq.gamma_n_Delta}.\\
For the first term, by Theorem 4.18 in \cite{Crisan and Li 2} 
\begin{align}
&\tilde{\mathbb E}\left[n\Delta^2\sup_{s\in[t,t+u]}\left(\rho_s^n(A\tilde
f_k)-\rho_s(A\tilde f_k\right)^2\right]
\leq\Delta^2c^T\left\|A\tilde f_k\right\|_{m+2,\infty}^2\rightarrow0,\quad\text{as}\
\Delta\rightarrow0.\nonumber
\end{align}
Similarly, for the second term, 
\begin{align}
\tilde{\mathbb E}\left[n\Delta\sup_{s\in[t,t+u]}\left(\rho_s^n(h\tilde
f_k)-\rho_s(h\tilde f_k)\right)^2\right]\leq\Delta\tilde c^T\left\|h\tilde
f_k\right\|_{m+2,\infty}^2\rightarrow0,\quad\text{as}\ \Delta\rightarrow0.\nonumber
\end{align}

For the remaining term, again note that $(\alpha\delta)^2\sim1/n$, and 
\begin{align}
&\tilde{\mathbb E}\left[\sum_{j=1}^n\sup_{s\in[t,t+u]}\Big(\xi_{i\delta}^na_j^n(s)\Big)^2\right]
=\sum_{j=1}^n\tilde{\mathbb
E}\left[\sup_{s\in[t,t+u]}\Big(\xi_{i\delta}^na_j^n(s)\Big)^2\right]
\leq nc_2^{t,2}.\nonumber
\end{align}
Thus
\begin{align}
&\frac{\Delta}{n}C_{\gamma^n}\|\tilde
f_k\|_{6,\infty}^2\sum_{j=1}^n\tilde{\mathbb E}\left[\sup_{s\in[t,t+u]}\Big(\xi_{i\delta}^na_j^n(s)\Big)^2\right]
\leq\frac{\Delta}{n}\|\tilde
f_k\|_{6,\infty}^2nc_2^{t,2}=\Delta c_2^{t,2}\|\tilde f_k\|_{6,\infty}^2\rightarrow0,\quad\text{as}\
\Delta\rightarrow0.\nonumber
\end{align}
This completes the proof.
\end{proof}

The following proposition shows that \eqref{eq.condition4} holds for $\{U^n(\tilde
f_k)\}$.
\begin{proposition}\label{prop.verify_condition4}
For each $k\in\mathbb N$, we have 
\begin{equation}
\lim_{\Delta\rightarrow0}\limsup_{n\rightarrow\infty}\tilde{\mathbb E}\left[\left(1\wedge
d(U_{\Delta}^n(\tilde f_k),U_0^n(\tilde f_k))\right)^2\right]=0.
\end{equation}
\end{proposition}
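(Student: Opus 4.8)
The plan is to recognise that this initial-time condition is, up to setting $t=0$ and $u=\Delta$, exactly the same increment estimate already established in Propositions \ref{prop.verify_condition2} and \ref{prop.verify_condition3}. First I would observe that because we always take $\Delta<\delta/2$ and the first correction event after the initialisation occurs at time $\delta>\Delta$, the interval $[0,\Delta]$ contains no resampling event. Consequently $[\Delta/\delta]=[0/\delta]=0$, so the jump part of the martingale vanishes, $n^\varepsilon\big(M_{[\Delta/\delta]}^{n,\tilde f_k}-M_{0}^{n,\tilde f_k}\big)=0$, exactly as in the proof of Proposition \ref{prop.verify_condition2}; likewise $B_0^{n,\tilde f_k}=0$ as an integral over an empty interval.

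Next I would bound $(1\wedge d(\cdot,\cdot))^2$ by the squared difference and expand the increment using the evolution equation \eqref{eq.evolution_for_U^n} specialised to $[0,\Delta]$,
$$U_\Delta^n(\tilde f_k)-U_0^n(\tilde f_k)=\int_0^\Delta U_s^n(A\tilde f_k)ds+\int_0^\Delta U_s^n(h\tilde f_k)dY_s+n^\varepsilon B_\Delta^{n,\tilde f_k}.$$
Applying $(a+b+c)^2\le 3(a^2+b^2+c^2)$ together with the same term-by-term estimates used in Proposition \ref{prop.verify_condition2} — Jensen's inequality for the drift term, the It\^o isometry for the $dY$ term, and the pointwise bounds on $R^1,R^2,R^3$ combined with $\alpha\propto 1/\sqrt n$ and the moment bound \eqref{eq.bound_for_xi_2} — produces precisely the right-hand side $\tilde{\mathbb E}[\gamma^n(\Delta)]$ of \eqref{eq.bound_for_gamma_u} evaluated at $t=0$, $u=\Delta$. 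Since that estimate is stated conditionally on $\mathcal F_t$ for all $0\le t\le T'$, the case $t=0$ is admissible, and taking expectations via the tower property turns the conditional bound into the unconditional inequality
$$\tilde{\mathbb E}\left[\left(1\wedge d(U_\Delta^n(\tilde f_k),U_0^n(\tilde f_k))\right)^2\right]\le\tilde{\mathbb E}\left[\gamma^n(\Delta)\right].$$

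Finally I would invoke Proposition \ref{prop.verify_condition3}, which already gives $\lim_{\Delta\to0}\limsup_{n\to\infty}\tilde{\mathbb E}[\gamma^n(\Delta)]=0$; chaining this with the previous display and the monotonicity of $\limsup$ yields the claim. I do not anticipate a genuine obstacle, since the entire content is inherited from the two preceding propositions. The single point I would state explicitly — and the only place where care is actually needed — is the verification that $[0,\Delta]$ is free of correction events, because it is precisely the absence of the $M$-jump that allows the initial increment to be dominated by the vanishing quantity $\gamma^n(\Delta)$ rather than by an $O(1)$ resampling contribution.
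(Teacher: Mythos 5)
Your proof is correct, but it takes a genuinely different (and more quantitative) route than the paper, whose entire argument for this proposition is the single sentence that the result ``follows immediately by continuity of $\{U^n(\tilde f_k)\}_n$ at the initial time $0$.'' You instead observe that the initial-time condition is the $t=0$, $u=\Delta$ instance of the increment bound already proved: since $\Delta<\delta/2<\delta$, the interval $[0,\Delta]$ contains no correction event, the $M$-term difference vanishes, and the remaining drift, $dY$, and $R^1,R^2,R^3$ terms are dominated by $\tilde{\mathbb E}[\gamma^n(\Delta)]$ exactly as in Proposition \ref{prop.verify_condition2}; Proposition \ref{prop.verify_condition3} then kills the double limit. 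This is arguably the more careful argument: the bare appeal to pathwise continuity at $0$ does not by itself address the uniformity in $n$ that the inner $\limsup_{n\to\infty}$ requires, whereas your reduction to $\gamma^n(\Delta)$ supplies precisely that uniformity for free, at the cost of a few lines of bookkeeping. The one point you single out as needing care --- that $[0,\Delta]$ is free of resampling events, so no $O(1)$ jump contribution enters --- is indeed the only substantive verification, and you handle it correctly.
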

\begin{proof}
The result follows immediately by continuity of $\{U^n(\tilde f_k)\}_n$ at
the initial time 0.
\end{proof}

\begin{theorem}
The measure-valued processes $\{U_t^n:t\in[0,T]\}_{n\geq1}$ forms a tight
sequence.
\end{theorem}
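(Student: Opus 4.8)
The plan is to deduce the measure-valued tightness from the tightness of the one-dimensional projections, exactly along the reduction already set up above. Having embedded the processes into the complete and separable space $D_{\mathcal M_F(\overline{\mathbb R})}[0,T]$, I would invoke the Roelly-Coppoletta criterion (Theorem \ref{thm.stochastic17}): it suffices to prove that, for each member $\tilde f_k$ of the dense family (with $\tilde f_0\equiv1$ and $\tilde f_k|_{\mathbb R}$ dense in $C_b^6(\mathbb R)$), the real-valued sequence $\{\pi_{\tilde f_k}U^n\}_n=\{U^n(\tilde f_k)\}_n$ is tight in $D_{\overline{\mathbb R}}[0,T]$. Prohorov's theorem — applicable precisely because of the compactification — then turns the relative compactness that Kurtz's criteria deliver into tightness, and Theorem \ref{thm.stochastic17} lifts this back to tightness of the laws $\{\tilde{\mathbb P}_n\}_n$, i.e. of $\{U^n\}$.

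For each fixed $k$ I would verify the hypotheses of Kurtz's criteria (Theorem \ref{thm.Kurtz}) for $X^n=U^n(\tilde f_k)$ with $\zeta=2$. The compact-containment condition \eqref{eq.condition1} is supplied by Lemma \ref{lem.needed_for_tightness}, whose uniform-in-$n$ bound rests on the $L^2$ estimate (Theorem 4.18 in \cite{Crisan and Li 2}) together with the scaling $U^n=\sqrt n(\rho^n-\rho)$. For the bidirectional modulus-of-continuity condition \eqref{eq.condition2} I would use the semimartingale decomposition \eqref{eq.evolution_for_U^n}, splitting the increment of $U^n(\tilde f_k)$ into the $A$-drift, the $dY$-integral, the pure-jump martingale $n^\varepsilon M_{[t/\delta]}^{n,\tilde f_k}$ collected at resampling times, and the $B$-term built from $R^1,R^2,R^3$. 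The candidate bounding family $\gamma^n(\Delta)$ is the one exhibited in \eqref{eq.gamma_n_Delta}, and the verification of \eqref{eq.condition2} is Proposition \ref{prop.verify_condition2}.

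The key device here, and the place I expect the argument to be most delicate, is the treatment of the jumps of $U^n$ at the resampling times: the product form of \eqref{eq.condition2} pairs an increment over $[t,t+u]$ with one over $[t-v,t]$, and across a resampling instant the jump term $M^{n,\tilde f_k}$ does not vanish. I would neutralise this exactly as in the preamble to Proposition \ref{prop.verify_condition2}, by choosing $\Delta<\delta/2$ so that at most one of the two windows can contain a resampling event; whichever window does, its factor is controlled by the truncation $1\wedge d(\cdot,\cdot)\le1$, while the other, resampling-free window satisfies $M_{[(t+u)/\delta]}^{n,\tilde f_k}-M_{[t/\delta]}^{n,\tilde f_k}=0$ and hence the clean bound of Proposition \ref{prop.verify_condition2}. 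The remaining work is arithmetic bookkeeping: the $A$- and $h$-terms contribute $n\Delta^2$ and $n\Delta$ times squared $L^2$-errors, which stay $O(\Delta^2)$ and $O(\Delta)$ because $n(\rho^n_s(\cdot)-\rho_s(\cdot))^2$ is bounded by Theorem 4.18 in \cite{Crisan and Li 2}, and the $R$-terms contribute $n^{-1}$ times a sum over $n$ particles of $(\xi^n a^n_j)^2$, which is $O(1)$ by \eqref{eq.bound_for_xi_2}; the pointwise bounds $R^1\lesssim\alpha\delta\lesssim n^{-1/2}$ and their analogues, together with the choice $\alpha\propto n^{-1/2}$, are what make these contributions of the correct order.

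Finally, conditions \eqref{eq.condition3} and \eqref{eq.condition4} are Propositions \ref{prop.verify_condition3} and \ref{prop.verify_condition4}: the former shows $\lim_{\Delta\to0}\limsup_n\tilde{\mathbb E}[\gamma^n(\Delta)]=0$ term by term (again using Theorem 4.18 in \cite{Crisan and Li 2} for the first two terms and \eqref{eq.bound_for_xi_2} for the third, with the $(\alpha\delta)^2\sim1/n$ scaling cancelling the particle count), while the latter is immediate from right-continuity of $U^n(\tilde f_k)$ at $0$. With \eqref{eq.condition1}--\eqref{eq.condition4} all in hand, Kurtz's criteria give relative compactness of $\{U^n(\tilde f_k)\}_n$ for every $k$; Prohorov's theorem and Theorem \ref{thm.stochastic17} then conclude the tightness of $\{U^n\}$. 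The one point worth double-checking is that the inclusion $\tilde f_0\equiv1$ in the test family controls the total mass, so that no mass escapes to the compactification point $\infty$ in the limit.
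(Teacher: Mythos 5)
Your proposal follows the paper's own argument essentially verbatim: compact containment via Lemma \ref{lem.needed_for_tightness}, the modulus condition via the $\Delta<\delta/2$ device and the $\gamma^n(\Delta)$ of \eqref{eq.gamma_n_Delta} in Proposition \ref{prop.verify_condition2}, then Propositions \ref{prop.verify_condition3}--\ref{prop.verify_condition4}, Kurtz's criteria, and the Roelly-Coppoletta lift to the measure-valued setting. This is correct and matches the paper's proof.
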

\begin{proof}
Lemma \ref{lem.needed_for_tightness}, Propositions \ref{prop.verify_condition2}
-- \ref{prop.verify_condition4}
state that all the conditions in Theorem \ref{thm.Kurtz} are satisfied. Then
by Theorem \ref{thm.Kurtz} we know that $\{\pi_{\tilde f_k}U^n\}_n$ is tight,
which implies that $\{\pi_{\tilde f_k}\tilde{\mathbb P}_n\}_n$ forms a tight
sequence on $\mathcal P(D_{\mathbb R}[0,T])$; then by Theorem \ref{thm.stochastic17}
we know $\{\tilde{\mathbb P}_n\}$ forms a tight sequence on $\mathcal P(D_{\mathcal
M_F(\mathbb R^d)}[0,T])$. By definition we can then conclude the following
tightness result.
\end{proof}

\begin{remark}
If we assume that the resampling happens only in $[t,t+u]$, then by exactly
the same discussion as above (except that we replace $s\in[t,t+u]$ by $s\in[t-v,u]$),
we can also obtain the tightness for the process $\{U_t^n\}_{n\geq1}$.  
\end{remark}

\section{Step Two: Limits of Convergent Subsequences}

In this section we show that $\{U^n\}_n$ converges in distribution to a uniquely
determined process $U$. The strategy of the proof of the convergence in distribution
is as follows: Since the sequence of the measure-valued process $\{U^n\}_n$
is tight, then any subsequence $\{U^{n_k}\}_k$ of $\{U^n\}_n$ contains a
convergent sub-subsequence $\{U^{n_{k_l}}\}_l$. We will prove that any convergent
subsequence has a weak limit $U$ which is the unique solution of \eqref{eq.evolution_of_U}.
This ensures that the entire sequence $\{U^n\}_n$ is convergent and its weak
limit is the solution $U$ of \eqref{eq.evolution_of_U}.

We need the following preliminary result.
\begin{lemma}\label{lem.preliminary}
Let $\varphi\in C_b^m(\overline{\mathbb R})$ ($m\geq6$) be a test function,
and define the measure-valued
processes
\begin{align}
\tilde\rho_t^{n.1}&\triangleq\frac{1}{n}\sum_{j=1}^n\xi_{i\delta}^na_j^n(t)\delta_{v_j^n(t)}=\sum_{j=1}^n\xi_t^n\bar
a_j^n(t)\delta_{v_j^n(t)},\nonumber\\
\tilde\rho_t^{n.2}&\triangleq\frac{1}{n}\sum_{j=1}^n\left\{\xi_{i\delta}^na_j^n(t)\right\}^2\delta_{v_j^n(t)}=
n\sum_{j=1}^n\left\{\xi_t^n\bar a_j^n(t)\right\}^2\delta_{v_j^n(t)}.
\end{align}
then for any $t\in[0,T]$,
$$\tilde\rho_t^{n,1}\rightarrow\tilde\rho_t^1,\quad\tilde\rho_t^{n,2}\rightarrow\tilde\rho_t^2,\quad\quad\tilde{\mathbb
P}-a.s.,$$
where $\tilde\rho^1$ is the solution of the Zakai equation, and $\tilde\rho^2$ is the measure-valued process
satisfying, for any $\varphi\in\mathcal D(A)$,
\begin{align}
\tilde\rho_t^2(\varphi)=\pi_0(\varphi)&+\int_0^t\Big\{\rho_s(\mathbf 1)\rho_s(A\varphi)
+\rho_s(h)\rho_s(h\varphi)\Big\}ds
+\int_0^t\Big\{\rho_s(\mathbf 1)\rho_s(h\varphi)+\rho_s(h)\rho_s(\varphi)\Big\}dY_s.
\end{align}
\end{lemma}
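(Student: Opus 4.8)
The plan is to establish the two almost-sure convergences $\tilde\rho^{n,1}_t\to\tilde\rho^1_t$ and $\tilde\rho^{n,2}_t\to\tilde\rho^2_t$ by identifying the limiting objects as solutions of the stated (stochastic) evolution equations and then invoking uniqueness. The first convergence is essentially the basic consistency of the Gaussian-mixture particle system: $\tilde\rho^{n,1}$ is the empirical measure built from the weighted means $v_j^n(t)$, and the claim is that it converges to the solution of the Zakai equation. I would first derive the evolution equation satisfied by $\tilde\rho^{n,1}_t(\varphi)$ for a fixed test function $\varphi\in C_b^m(\overline{\mathbb R})$ with $m\geq 6$, applying It\^o's formula to $\varphi(v_j^n(t))$ using the dynamics \eqref{eq.appro_with_variance}, multiplying by the weights $\xi^n_{i\delta}a_j^n(t)/n$, and summing over $j$. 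The point-mass structure $\delta_{v_j^n(t)}$ means that $\varphi$ is evaluated exactly at the means, so no Gaussian-smearing error terms of the type $R^1,R^2,R^3$ enter; the remainder should be that $\tilde\rho^{n,1}$ satisfies the Zakai equation up to a martingale term whose quadratic variation vanishes as $n\to\infty$, together with the resampling/branching discrepancy which is controlled exactly as in \cite{Crisan and Li 2}.

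For the second convergence the key new ingredient is the squared-weight structure. Here I would compute the evolution of $\tilde\rho^{n,2}_t(\varphi)=\tfrac1n\sum_j(\xi^n_{i\delta}a_j^n(t))^2\varphi(v_j^n(t))$ by It\^o's formula, now accounting both for the $dV^{(j)}$-driven fluctuations of $v_j^n$ and, crucially, for the $dY$-driven dynamics of the weights $a_j^n$ via the first line of \eqref{eq.appro_with_variance}. Squaring $a_j^n$ produces an extra drift from the quadratic variation of the $Y$-integral, which contributes the terms $h(v_j^n)^2$; after summation and passage to the limit these should collapse into the cross terms $\rho_s(\mathbf 1)\rho_s(h\varphi)+\rho_s(h)\rho_s(\varphi)$ in the $dY$-integrand and $\rho_s(\mathbf 1)\rho_s(A\varphi)+\rho_s(h)\rho_s(h\varphi)$ in the drift. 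The appearance of products such as $\rho_s(\mathbf 1)\rho_s(h\varphi)$ is a manifestation of a \emph{propagation-of-chaos} phenomenon: the squared empirical measure factorises asymptotically into a product of two copies of the limiting $\rho_s$, so I would justify each passage to the limit by combining the moment bounds \eqref{eq.bound_for_xi_2} on $\xi^n a_j^n$ with the already-established convergence of $\tilde\rho^{n,1}$.

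The main obstacle is controlling the martingale and resampling contributions uniformly enough to pass to the almost-sure (not merely $L^2$) limit, and in particular handling the discontinuities introduced by the branching mechanism at the times $i\delta$. For $\tilde\rho^{n,2}$ the branching is delicate because the multinomial resampling correlates offspring counts $o_j^{n,i\delta}$ across particles, so the second-moment object does not simply inherit the martingale cancellation that makes $\tilde\rho^{n,1}$ converge; I expect to need the conditional variance of the multinomial scheme together with the uniform moment bound $c_2^{t,p}$ to show that the branching-induced jump in $\tilde\rho^{n,2}$ vanishes in the limit. I would therefore treat the two equations in tandem, first proving convergence of $\tilde\rho^{n,1}$ and then feeding it into the analysis of $\tilde\rho^{n,2}$, and close the argument by appealing to pathwise uniqueness for the two linear (Zakai-type) equations, which guarantees that every almost-sure limit point coincides with $\tilde\rho^1$ and $\tilde\rho^2$ respectively and hence that the full sequences converge.
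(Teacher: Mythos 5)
The paper's own ``proof'' of this lemma is a single sentence deferring to the argument in \cite{Crisan and Li 2} that $\rho_t^n\to\rho_t$, so your sketch is necessarily more explicit than the original. For $\tilde\rho^{n,1}$ your route (It\^o's formula on $\varphi(v_j^n(t))$, weighting by $\xi_{i\delta}^na_j^n(t)/n$, a martingale whose quadratic variation vanishes, and the branching control of \cite{Crisan and Li 2}) is indeed that scheme; the only point to flag is that the Dirac masses see the diffusion coefficient $\sqrt{1-\alpha}\,\sigma$, so the generator you recover is $f\varphi'+\tfrac{1-\alpha}{2}\sigma^2\varphi''$ and the $O(\alpha)=O(n^{-1/2})$ discrepancy from $A$ has to be absorbed into the vanishing remainder.

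The genuine gap is in your identification of the limit of $\tilde\rho^{n,2}$. First note that the stated equation for $\tilde\rho^2$ has only $\rho_s$ on its right-hand side and is exactly It\^o's product rule applied to $\rho_t(\mathbf 1)\rho_t(\varphi)$ (with $\rho_0(\mathbf 1)=1$); the lemma is therefore asserting the factorisation $\tilde\rho_t^2=\rho_t(\mathbf 1)\,\rho_t$. Your proposed mechanism --- It\^o on $(a_j^n(t))^2\varphi(v_j^n(t))$ followed by term-by-term passage to the limit --- produces, from $da_j^n=a_j^nh(v_j^n)\,dY$, the drift $\tilde\rho_s^{n,2}(A\varphi)+\tilde\rho_s^{n,2}(h^2\varphi)$ and the $dY$-coefficient $2\,\tilde\rho_s^{n,2}(h\varphi)$, and no propagation-of-chaos step can ``collapse'' these into the asymmetric cross terms of the statement: if the limit factorises as $\rho(\mathbf 1)\rho$, then $2\,\tilde\rho_s^{2}(h\varphi)=2\rho_s(\mathbf 1)\rho_s(h\varphi)$, which equals $\rho_s(\mathbf 1)\rho_s(h\varphi)+\rho_s(h)\rho_s(\varphi)$ only when $h$ is ($\rho_s$-essentially) constant, and the same mismatch occurs between $\rho_s(\mathbf 1)\rho_s(h^2\varphi)$ and $\rho_s(h)\rho_s(h\varphi)$ in the drift. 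So generator matching cannot deliver the stated limit equation. What actually has to be proved is the factorisation itself, namely that $\frac{1}{n}\sum_j\bigl(\xi_{i\delta}^na_j^n(t)\bigr)^2\varphi(v_j^n(t))-\rho_t^n(\mathbf 1)\rho_t^n(\varphi)\to0$, i.e.\ that the empirical covariance across particles between $a_j^n(t)$ and $a_j^n(t)\varphi(v_j^n(t))$ vanishes. This is a conditional law-of-large-numbers statement about the (conditionally i.i.d.\ given $\mathcal Y$, equally re-weighted at each branching time) particles, it is the genuinely delicate point away from the branching times where the conditional spread of the weights $a_j^n(t)$ given $\mathcal Y_t$ is not obviously negligible, and it is not supplied by your argument.
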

\begin{proof}
The proof is identical to that of $\rho_t^n$ converging to $\rho_t$, which is included in \cite{Crisan and Li 2}.
\end{proof}

\begin{proposition}\label{prop.evolution_of_U}
For any $\varphi\in C_b^6(\overline{\mathbb R})$, let $\Lambda^\varphi$ be
the process defined by
\begin{align}\label{eq.sum_of_limiting_processes}
\Lambda_t^\varphi=&\sum_{i=1}^{[t/\delta]}\rho_{i\delta}(\mathbf
1)\sqrt{\pi_{i\delta-}(\varphi^2)-\left(\pi_{i\delta-}(\varphi)\right)^2}\Upsilon_{i}+c_\omega\int_0^t\tilde\rho_s^1(\Psi\varphi)ds\nonumber\\
&+c_\omega\int_{0}^t\left(\tilde\rho_s^1(h\varphi''-(h\varphi)'')\right)dB_s^{(2)}+\int_0^t\sqrt{\tilde\rho_s^2\left((\sigma\varphi')^2\right)}dB_s^{(3)}
\end{align} 
for $t\in[0,T]$. In \eqref{eq.sum_of_limiting_processes}, 
$\{\Upsilon_i\}_{i\in\mathbb N}$ is a sequence of independent identically
distributed, standard normal random variables, and $\left\{\sqrt{\pi_{i\delta-}(\varphi^2)-\left(\pi_{i\delta-}(\varphi)\right)^2}\Upsilon_{i}\right\}_i$
are mutually independent given the $\sigma$-algebra $\mathcal Y$.
$c_\omega$ is a constant independent of $n$, and the operator $\Psi$ is defined
by
$$\Psi\varphi=\frac{f\varphi'''}{2}+\frac{\sigma\varphi^{(4)}}{4}-\frac{3(A\varphi)''}{2}.$$
$B^{(2)}$ and $B^{(3)}$ are two independent standard Brownian motion
both independent of the observation $Y$.

If $U$ is a $\mathcal D_{\mathcal M_F(\overline{\mathbb R})}[0,\infty)$-valued
process such that for $\varphi\in\mathcal C_b^6(\overline{\mathbb R})$
\begin{equation}\label{eq.evolution_of_U}
U_t(\varphi)=U_0(\varphi)+\int_{0}^tU_s(A\varphi)ds+\int_{0}^tU_s(h\varphi)dY_s+\Lambda_t^\varphi,
\end{equation}
then $U$ is pathwise unique. That is, for any two strong solutions $U^1$ and $U^2$ of \eqref{eq.evolution_of_U} with common
initial value $\mathbb P\left[U_{0}^1=U_{0}^2\right]=1$, the two processes
are indistinguishable, i.e. $\mathbb P\left[U_t^1=U_t^2;t\in[0,T]\right]=1$.
\end{proposition}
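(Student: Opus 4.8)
The statement to prove is the pathwise uniqueness of solutions to the linear stochastic PDE \eqref{eq.evolution_of_U}. The key observation is that the term $\Lambda_t^\varphi$ depends only on $\pi$, $\rho$, $\tilde\rho^1$, $\tilde\rho^2$ and the external noises $\{\Upsilon_i\}$, $B^{(2)}$, $B^{(3)}$ — none of which involves the solution process $U$ itself. Consequently, the difference $D=U^1-U^2$ of two solutions satisfies the \emph{homogeneous} linear equation with vanishing inhomogeneity, namely
$$
D_t(\varphi)=\int_0^tD_s(A\varphi)\,ds+\int_0^tD_s(h\varphi)\,dY_s,\qquad D_0(\varphi)=0.
$$
The plan is to show that the only solution to this homogeneous equation with zero initial condition is $D\equiv0$; indistinguishability of $U^1$ and $U^2$ then follows since the difference of their right-continuous paths vanishes identically.

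The standard route is a duality (or adjoint) argument. For a fixed terminal time $t$ and a suitable terminal test function, I would introduce a dual backward equation: find a function-valued process $\varphi_s$, $s\in[0,t]$, solving the backward stochastic PDE $d\varphi_s=-A\varphi_s\,ds-h\varphi_s\,dY_s$ (interpreted in the appropriate It\^o/Stratonovich sense) with prescribed terminal datum $\varphi_t=\psi\in C_b^6(\overline{\mathbb R})$. One then applies the It\^o product rule to $D_s(\varphi_s)$ and shows that the drift and martingale terms cancel against the backward dynamics of $\varphi_s$, yielding
$$
D_t(\psi)=D_0(\varphi_0)=0
$$
for every such $\psi$. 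Since the test functions $\psi$ form a convergence-determining class over $\mathcal M_F(\overline{\mathbb R})$ (recall $C_b^6$ was chosen precisely as a separating, convergence-determining set in Section 4), this forces $D_t=0$ as a measure, $\tilde{\mathbb P}$-almost surely, for each fixed $t$; a standard right-continuity argument upgrades this to indistinguishability on $[0,T]$.

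The main obstacle is the regularity and solvability of the backward dual equation. One must verify that for terminal data in $C_b^6(\overline{\mathbb R})$ the backward SPDE admits a solution $\varphi_s$ that remains sufficiently smooth (enough bounded derivatives) for $A\varphi_s$ and $h\varphi_s$ to be admissible test functions and for the It\^o product rule to apply rigorously to the measure-valued pairing $D_s(\varphi_s)$. Under Assumption (A) — $f,\sigma\in C_b^6$ and $h\in C_b^2$ — the coefficients are regular enough to propagate this smoothness backward in time, and this is exactly why the paper restricts to six-times-differentiable test functions: six derivatives leave enough room to absorb the second-order operator $A$ and the multiplicative $h$ while keeping the dual solution in a workable space. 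An alternative, if one wishes to avoid constructing the dual solution explicitly, is a Gronwall/energy estimate: expand $\tilde{\mathbb E}[D_s(\varphi)^2]$ using the It\^o isometry against $dY$ together with the deterministic drift, bound the resulting terms by $\sup_\varphi$-type norms controlled through the separating metric $d_{\mathcal M}$, and close a Gronwall inequality forcing the second moment of $D$ to vanish. The duality approach is cleaner and is the one I would pursue, with the regularity propagation being the step requiring genuine care.
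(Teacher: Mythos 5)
Your reduction is correct: since $\Lambda^\varphi$ is built only from $\pi$, $\rho$, $\tilde\rho^1$, $\tilde\rho^2$ and the external noises, it is common to both solutions and drops out, leaving $D=U^1-U^2$ satisfying the homogeneous signed-measure-valued Zakai-type equation with $D_0=0$. From there, however, you take a genuinely different route from the paper. You propose duality with a backward SPDE $d\varphi_s=-A\varphi_s\,ds-h\varphi_s\,dY_s$, pairing $D_s(\varphi_s)$ and propagating regularity of the dual solution backward in time; this is a legitimate classical strategy for Zakai-type uniqueness, but it requires constructing the backward dual solution (adapted to the backward filtration of $Y$, which forces a careful forward--backward It\^o product rule for the pairing with the forward-adapted $D$) and is the technically heaviest part, as you acknowledge. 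The paper instead works with the correlation functionals $U^{ij}(\varphi_1,\varphi_2)=\tilde{\mathbb E}[U^i(\varphi_1)U^j(\varphi_2)]$, shows via It\^o's formula that the combination $v_t=(U^{12}_t-U^{11}_t)+(U^{21}_t-U^{22}_t)$ — which for $\varphi_1=\varphi_2=\varphi$ equals $-\tilde{\mathbb E}[D_t(\varphi)^2]$ — satisfies a \emph{deterministic} homogeneous bilinear evolution equation started at zero, and then imports uniqueness for that deterministic system from Theorem 2.21 and Remark 3.4 of Lucic and Heunis. This is essentially your ``alternative'' energy-estimate route, with one caveat you should note: a naive Gronwall bound on $\tilde{\mathbb E}[D_s(\varphi)^2]$ for a single $\varphi$ does not close, because the drift produces $D_s(A\varphi)$ and $D_s(h\varphi)$ rather than $D_s(\varphi)$; one must treat the whole bilinear system in $(\varphi_1,\varphi_2)$ simultaneously, which is precisely what the cited Lucic--Heunis result supplies. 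Your duality approach buys independence from that external uniqueness theorem at the price of solving and regularising the backward SPDE; the paper's approach buys a short proof at the price of the citation.
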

\begin{proof}
The argument here is similar to Theorem 2.21 and Remark 3.4 in \cite{Lucic and Heunis}.
Firstly, it can be seen that the first, third and fourth terms of \eqref{eq.sum_of_limiting_processes}
are martingales while the second term is not a martingale.

Suppose there exist two solutions $U^1$ and $U^2$ of \eqref{eq.evolution_of_U}.
Then take $\varphi\in C_b^6(\mathbb R)$, we have
\begin{equation}
U_t^i(\varphi)=U_0^i(\varphi)+\int_{0}^tU_s^i(A\varphi)ds+\int_{0}^tU_s^i(h\varphi)dY_s+\Lambda_t^\varphi,\quad i=1,2.
\end{equation}

For $i,j=\{1,2\}$ let $U^{ij}(\varphi_1,\varphi_2)\triangleq\tilde{\mathbb
E}[U^i(\varphi_1)U^j(\varphi_2)]$, for $\varphi_1,\varphi_2\in C_b^6(\overline{\mathbb
R})$. \\

By It\^o's formula we have
\begin{align}
U^{12}(\varphi_1,\varphi_2)=&\int_{0}^tU^{12}(\varphi_1,A\varphi_2)ds+\int_{0}^t
U^{12}(A\varphi_1,\varphi_2)ds+\int_{0}^t U^{12}(h\varphi_1,h\varphi_2)ds\nonumber\\
+&\int_{0}^t\tilde{\mathbb E}\Big[U_s^1(\varphi_1)\tilde\rho_s^1(\Psi\varphi_2)+{U_s^2(\varphi_2)\tilde\rho_s^1(\Psi\varphi_1)}\Big]ds\nonumber\\
+&\int_0^t\tilde{\mathbb E}\left[\sqrt{\tilde\rho_s^2((\sigma\varphi_1)^2)\tilde\rho_s^2((\sigma\varphi_2)^2)}+\tilde\rho_s^1(h\varphi_1''-(h\varphi_1)'')\tilde\rho_s^1(h\varphi_2''-(h\varphi_2)'')\right]ds\nonumber\\
+&\tilde{\mathbb E}\left[\sum_{i=0}^{[t/\delta]}\tilde{\mathbb E}\left[(\rho_{i\delta}(\mathbf
1))^2\left(\pi_{i\delta-}(\varphi_1\varphi_2)-\pi_{i\delta-}(\varphi_1)\pi_{i\delta-}(\varphi_2)\right)\big|\mathcal
F_{i\delta-}\right]\right];\nonumber
\end{align}

\begin{align}
 U^{11}(\varphi_1,\varphi_2)=&\int_{0}^t U^{11}(\varphi_1,A\varphi_2)ds+\int_{0}^t
U^{11}(A\varphi_1,\varphi_2)ds+\int_{0}^t U^{11}(h\varphi_1,h\varphi_2)ds\nonumber\\
+&\int_{0}^t\tilde{\mathbb E}\Big[U_s^1(\varphi_1)\tilde\rho_s^1(\Psi\varphi_2)+{U_s^1(\varphi_2)\tilde\rho_s^1(\Psi\varphi_1)}\Big]ds\nonumber\\
+&\int_0^t\tilde{\mathbb E}\left[\sqrt{\tilde\rho_s^2((\sigma\varphi_1)^2)\tilde\rho_s^2((\sigma\varphi_2)^2)}+\tilde\rho_s^1(h\varphi_1''-(h\varphi_1)'')\tilde\rho_s^1(h\varphi_2''-(h\varphi_2)'')\right]ds\nonumber\\
+&\tilde{\mathbb E}\left[\sum_{i=0}^{[t/\delta]}\tilde{\mathbb E}\left[(\rho_{i\delta}(\mathbf
1))^2\left(\pi_{i\delta-}(\varphi_1\varphi_2)-\pi_{i\delta-}(\varphi_1)\pi_{i\delta-}(\varphi_2)\right)\big|\mathcal
F_{i\delta-}\right]\right];\nonumber
\end{align}
and similarly for for $U^{21}(\varphi_1,\varphi_2)$ and $U^{22}(\varphi_1,\varphi_2)$.

Let
\begin{equation}
v_t=\left(U_t^{12}-U_t^{11}\right)+\left(U_t^{21}-U_t^{22}\right),
\end{equation}
it then follows that
\begin{align}\label{eq.evolution_of_difference}
v_t(\varphi_1,\varphi_2)
=\int_{0}^tv_s(\varphi_1,A\varphi_2)ds+\int_{0}^tv_s(A\varphi_1,\varphi_2)ds+\int_{0}^tv_s(h\varphi_1,h\varphi_2)ds;
\end{align}
and $v_0(\varphi_1,\varphi_2)=0$.

It follows by Theorem 2.21(i) and Remark 3.4 in \cite{Lucic and Heunis} that
\eqref{eq.evolution_of_difference} has a unique solution and since \eqref{eq.evolution_of_difference}
is a homogeneous equation beginning at $0$. Then we have $v_t(\varphi_1,\varphi_2)\equiv0$,
which
implies
$$\left(U_t^{11}-U_t^{12}\right)+\left(U_t^{22}-U_t^{21}\right)=0,$$
that is to say, for $\varphi_1=\varphi_2=\varphi$
\begin{align}
&\tilde{\mathbb E}\left[U_t^1(\varphi_1)U_t^1(\varphi)-U_t^1(\varphi_1)U_t^2(\varphi)\right]+\tilde{\mathbb
E}\left[U_t^2(\varphi_1)U_t^2(\varphi)-U_t^2(\varphi_1)U_t^1(\varphi)\right]
=\tilde{\mathbb E}\left[\left(U_t^1(\varphi)-U_t^2(\varphi)\right)^2\right]=0;\nonumber
\end{align}
and thus $U^1(\varphi)=U^2(\varphi)$ for $\varphi\in C_b^6(\overline{\mathbb
R})$, which in turn implies that the solution $U$ of \eqref{eq.evolution_of_U}
is unique (See Exercise 4.1 in \cite{Bain and Crisan}).
\end{proof}

The following Theorem \ref{thm.main_result} states that unique solution $\{U\}$
of \eqref{eq.evolution_of_U} is indeed the weak limit of any convergent subsequence of the measure-valued
process $\{U^n\}_n$, in other words, $\{U^n\}_n$ converges in distribution
to $\{U\}$.

\begin{theorem}\label{thm.main_result}
Under Assumption (A), any convergent subsequence  of $\{U^n\}_n$ has a limit $U$ in distribution that is the unique $\mathcal D_{\mathcal M_F(\overline{\mathbb
R})}[0,\infty)$-valued
process $U$ solving the following equation
\begin{equation}\label{eq.limit_of_U_n}
U_t(\varphi)=U_0(\varphi)+\int_0^tU_s(A\varphi)ds+\int_0^tU_s(h\varphi)dY_s+\Lambda_t^\varphi,
\end{equation}
for $\varphi\in C_b^6(\overline{\mathbb R})$, where $\Lambda_t^\varphi$ is defined as in \eqref{eq.sum_of_limiting_processes}.
\end{theorem}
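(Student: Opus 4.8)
The plan is to combine the tightness established in the previous section with the pathwise uniqueness of Proposition \ref{prop.evolution_of_U}. Since $\{U^n\}_n$ is tight in $\mathcal P(D_{\mathcal M_F(\overline{\mathbb R})}[0,T])$, Prohorov's theorem guarantees that every subsequence admits a further subsequence converging in distribution to some limit $U$. If I can show that each such subsequential limit solves \eqref{eq.limit_of_U_n}, then Proposition \ref{prop.evolution_of_U} forces all the limits to coincide, and the usual subsequence argument (every subsequence has a sub-subsequence with the \emph{same} limit) upgrades this to convergence in distribution of the whole sequence $\{U^n\}_n$. So the real content is to pass to the limit in \eqref{eq.evolution_for_U^n} and recover \eqref{eq.limit_of_U_n}. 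To manipulate the weak limits pathwise I would first invoke the Skorokhod representation theorem to realise a convergent subsequence and its limit on a common probability space with almost sure convergence in $D_{\mathcal M_F(\overline{\mathbb R})}[0,T]$, carried out \emph{jointly} with the observation $Y$ and the branching/particle noise, since the joint law is what pins down the limiting driving processes.

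The linear terms are handled first. Fixing $\varphi\in C_b^6(\overline{\mathbb R})$, Assumption (A) keeps $A\varphi$ and $h\varphi$ admissible test functions of the required smoothness, so the evaluation functionals $\mu\mapsto\mu(A\varphi)$ and $\mu\mapsto\mu(h\varphi)$ are weakly continuous. Combined with the uniform moment bounds of \eqref{eq.bound_for_xi_2} and Theorem 4.18 in \cite{Crisan and Li 2}, this lets me pass to the limit in $U_0^n(\varphi)\to U_0(\varphi)$, in the Lebesgue integral $\int_0^t U_s^n(A\varphi)ds\to\int_0^t U_s(A\varphi)ds$, and in the stochastic integral $\int_0^t U_s^n(h\varphi)dY_s\to\int_0^t U_s(h\varphi)dY_s$, the last using a convergence theorem for Itô integrals against the fixed Brownian motion $Y$ together with the $L^2$-bounds that control the integrands.

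The heart of the argument is the identification of $\sqrt n\,M_{[t/\delta]}^{n,\varphi}$ and $\sqrt n\,B_t^{n,\varphi}$ from \eqref{eq.M_n_varepsilon} with the four constituents of $\Lambda_t^\varphi$ in \eqref{eq.sum_of_limiting_processes}. For the branching term I would apply a \emph{conditional} central limit theorem at each resampling time: conditionally on $\mathcal F_{i\delta-}$ the offspring counts $o_j^{n,i\delta}$ are multinomial, so the centred increment has conditional mean zero and conditional variance converging to $\rho_{i\delta}(\mathbf 1)^2\big(\pi_{i\delta-}(\varphi^2)-(\pi_{i\delta-}(\varphi))^2\big)$; a Lyapunov/Lindeberg check then produces the discrete Gaussian sum $\sum_i\rho_{i\delta}(\mathbf 1)\sqrt{\pi_{i\delta-}(\varphi^2)-(\pi_{i\delta-}(\varphi))^2}\,\Upsilon_i$, the $\Upsilon_i$ being independent of $Y$ because the sampling randomness is fresh. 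For $\sqrt n\,B_t^{n,\varphi}$ I would use $\alpha=n^{-1/2}$, hence $\omega_j^n\sim\alpha$, to see each of the $R^1$, $R^2$, $R^3$ contributions from \eqref{eq.R^1}--\eqref{eq.R^3} is $O(1)$ after the $\sqrt n$ rescaling. The $R^1$-drift, after inserting the empirical-measure limits $\tilde\rho^{n,1}\to\tilde\rho^1$ of Lemma \ref{lem.preliminary}, converges to $c_\omega\int_0^t\tilde\rho_s^1(\Psi\varphi)ds$; the $R^2$ and $R^3$ pieces are treated by a martingale central limit theorem, computing their predictable quadratic variations (which converge via Lemma \ref{lem.preliminary} to $\int_0^t\tilde\rho_s^2((\sigma\varphi')^2)ds$ for the $V^{(j)}$-driven $R^3$ piece, and to the analogous bracket for the $R^2$ piece) and, crucially, their covariations with $Y$. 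Since the $V^{(j)}$ are independent of $Y$, the $R^3$ piece is orthogonal to $Y$ and yields $\int_0^t\sqrt{\tilde\rho_s^2((\sigma\varphi')^2)}\,dB_s^{(3)}$, while the $R^2$ piece contributes the $c_\omega\int_0^t\tilde\rho_s^1(h\varphi''-(h\varphi)'')\,dB_s^{(2)}$ term, $B^{(2)}$ and $B^{(3)}$ being the independent Brownian motions read off from the bracket calculations.

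I expect the martingale central limit step to be the main obstacle. One must establish the \emph{joint} weak convergence of the linear part, the discrete branching fluctuations, and the continuous $R^2$/$R^3$ fluctuations together with $Y$, and then correctly read off, piece by piece, whether each limiting continuous martingale is driven by $Y$ itself or by a fresh Brownian motion independent of $Y$ — this is exactly what the covariation-with-$Y$ computation decides, and verifying the asymptotic orthogonality that makes $\Upsilon_i$, $B^{(2)}$, $B^{(3)}$ mutually independent and independent of $Y$ is the delicate point. Controlling the remainder terms uniformly (the $(\omega_j^n)^2$ contributions in \eqref{eq.R^1}--\eqref{eq.R^3} and the $I_{4,j}^{(k)}$, $I_{5,j}$, $I_j$ integrals) so that they vanish after the $\sqrt n$ rescaling again relies on \eqref{eq.bound_for_xi_2} and Theorem 4.18 in \cite{Crisan and Li 2}. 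Once every term in \eqref{eq.evolution_for_U^n} is matched to its counterpart in \eqref{eq.limit_of_U_n}, the subsequential limit $U$ solves \eqref{eq.limit_of_U_n}; pathwise uniqueness from Proposition \ref{prop.evolution_of_U} then pins the limit down uniquely and completes the proof.
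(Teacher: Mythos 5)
Your overall strategy coincides with the paper's: tightness plus Prohorov gives subsequential limits, each limit is shown to solve \eqref{eq.limit_of_U_n}, and uniqueness collapses all limits to a single $U$. The difference lies in how you identify the limit. You propose a Skorokhod representation (jointly with $Y$ and the particle noise) followed by direct conditional Lindeberg/martingale central limit theorems for the branching and $R^2$/$R^3$ fluctuations. The paper instead never realises the convergence almost surely: it characterises the limit through a martingale-problem formulation, verifying the two identities \eqref{eq.mean} and \eqref{eq.variance} by testing against products $\prod_i\alpha_i(U_{t_i})\prod_j\alpha_j'(Y_{t_j'})$, using uniform integrability, Burkholder--Davis--Gundy, and the Kurtz--Protter theorem \cite{Kurtz and Protter} for the stochastic integral against $Y$; the required limits of the quadratic variations, \eqref{eq.observation_one}--\eqref{eq.observation_four}, are computed separately in the appendix. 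The paper's route sidesteps the delicate joint-convergence and covariation-with-$Y$ bookkeeping you correctly identify as the main obstacle, because the conditional-expectation identities against functionals of $(U,Y)$ encode exactly the statement that $\bar\Lambda^\varphi$ is a martingale orthogonal to $Y$ with the prescribed bracket; your route, if carried out, would yield the same conclusion but requires you to actually prove the asymptotic orthogonality you flag as delicate.

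One genuine gap: you assert that the pathwise uniqueness of Proposition \ref{prop.evolution_of_U} ``forces all the limits to coincide.'' Subsequential limits are limits \emph{in distribution}, a priori realised on different probability spaces with different driving noises, so pathwise uniqueness does not directly apply to them; what you need is uniqueness \emph{in law}. The passage from pathwise uniqueness to uniqueness in law for this infinite-dimensional SPDE is a Yamada--Watanabe-type statement, and the paper devotes the opening of its proof precisely to this point, invoking the extension of Proposition 5.3.20 in \cite{Karatzas and Shreve} due to Ondrej\'at \cite{Ondrejat} and R\"ockner, Schmuland and Zhang \cite{Rockner et al}. Without this step your subsequence argument does not close. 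It is a standard and easily inserted link, but as written your proposal skips it.
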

\begin{proof}
From Proposition 5.3.20 in \cite{Karatzas and Shreve} and its extension to
stochastic partial differential equation and infinitely dimensional stochastic
differential equations, it follows that for solutions of stochastic partial
differential equations, pathwise uniqueness implies uniqueness in law. This
 was done by Ondrej\'at (see \cite{Ondrejat})
and R\"ockner, Schmuland and Zhang (see \cite{Rockner et al}). 

Thus by Proposition \ref{prop.evolution_of_U} the solution $U$ of \eqref{eq.evolution_of_U}
is unique in distribution.

Now let $\{U^{n_{k}}\}_k$ be any convergent (in distribution) subsequence
of $\{U^n\}_n$ to a process $U$. We then verify that this process $U$ solves
\eqref{eq.evolution_of_U}, and then the uniqueness of solution of \eqref{eq.evolution_of_U}
implies that the original sequence $\{U^n\}_n$ converges to $U$ as well.
Bearing in mind that $U^{n_{k}}$ satisfies \eqref{eq.evolution_for_U^n},
it then essentially suffices to show that $\Lambda_t^\varphi$ in \eqref{eq.limit_of_U_n},
which is given by the weak limits of $\sqrt nM_{[t/\delta]}^{n,\varphi}$
and $\sqrt nB_t^{n,\varphi}$ in \eqref{eq.evolution_for_U^n}, does satisfy
\eqref{eq.sum_of_limiting_processes}.

We first denote by 
$$
\bar\Lambda_t^\varphi\triangleq\Lambda_t^\varphi-\int_0^t\tilde\rho_s^1(\Psi\varphi)ds
$$
the martingale part of $\Lambda_t^\varphi$. Then
we only need to show that $\bar\Lambda^\varphi$ has the quadratic variation
which is the same as that of $\Lambda^\varphi$ in \eqref{eq.sum_of_limiting_processes}.
In order to do so, we show that for all $d,d'\geq0$, $0\leq t_1<t_2<\cdots<t_d\leq
s\leq T$, $0\leq t_1'<t_2'<\cdots<t_{d'}'\leq t\leq T$, continuous bounded
functions $\alpha_1,\ldots,\alpha_d$ on $\mathcal M_F(\overline{\mathbb R})$
and continuous functions $\alpha_1',\ldots,\alpha_{d'}'$ on $\overline{\mathbb
R}$; we have:
\begin{equation}\label{eq.mean}
\tilde{\mathbb E}\left[\left(\bar\Lambda_t^\varphi-\bar\Lambda_s^\varphi\right)\prod_{i=1}^d\alpha_i(U_{t_i})\prod_{j=1}^{d'}\alpha_j'(Y_{t_j'})\right]=0,
\end{equation}
and
\begin{align}\label{eq.variance}
\tilde{\mathbb E}\Bigg[\Bigg((\bar \Lambda_t^\varphi&-\bar \Lambda_s^\varphi)^2-\int_s^t\Big\{\tilde\rho_r^2\left((\sigma\varphi')^2\right)+\left(\tilde\rho_r^1\left(h\varphi''-(h\varphi)''\right)\right)^2\Big\}dr\nonumber\\
-&\sum_{i=[s/\delta]+1}^{[t/\delta]}\left(\rho_{i\delta}(\mathbf 1)\right)^2\left[\pi_{i\delta-}(\varphi^2)-\left(\pi_{i\delta-}(\varphi)\right)^2\right]\Bigg)\prod_{i=1}^d\alpha_i(U_{t_i})\prod_{j=1}^{d'}\alpha_j'(Y_{t_j'})\Bigg]=0.
\end{align}

To prove \eqref{eq.mean}, we first observe the following:
\begin{align}
\lim_{n\rightarrow\infty}\frac{1}{\sqrt {n}}\sum_{j=1}^n\int_{0}^{t}\xi_{[s/\delta]\delta}^na_j^n(s)R_{s,j}^1(\varphi)ds
\triangleq\lim_{n\rightarrow\infty}\Lambda_t^{n,R^1,\varphi}=\int_0^t\tilde\rho_s^1(\Psi\varphi)ds,
\end{align}
the proof can be found in \cite{Li}.
Then note that
$$
\bar\Lambda_t^\varphi-\bar\Lambda_s^\varphi=U_t(\varphi)-U_s(\varphi)-\int_s^tU_r(A\varphi)dr-\int_s^tU_r(h\varphi)dY_r-\int_s^t\tilde\rho_r^1
\left(\Psi\varphi\right)dr,
$$
thus showing \eqref{eq.mean} is equivalent to showing
\begin{align}\label{eq.mean_equivalence}
\tilde{\mathbb E}\Bigg[\Bigg(&U_t(\varphi)-U_s(\varphi)-\int_s^tU_r(A\varphi)dr-\int_s^tU_r(h\varphi)dY_r-\int_s^t\tilde\rho_r^1(\Psi\varphi)dr\Bigg)
\times\prod_{i=1}^d\alpha_i(U_{t_i})\prod_{j=1}^{d'}\alpha_j'(Y_{t_j'})\Bigg]=0.
\end{align}
This equality will follow by virtue of the martingale property of $\bar\Lambda_t^\varphi-\bar\Lambda_s^\varphi$.\\

By virtue of the existence of $\Lambda_T^n(\tilde f_k)$ in Lemma \ref{lem.needed_for_tightness},
it follows , for $n'\in\mathbb N$, that
$$
\sup_{n'}\tilde{\mathbb E}\left[(U^{n'}(\varphi))^2\right]<\infty,
$$
which implies that $\{U^{n_{k}}\}$ is uniformly integrable (see II.20, Lemma
20.5 in \cite{Rogers and Williams}). Therefore we have that
\begin{align}
&\lim_{k\rightarrow\infty}\tilde{\mathbb E}\left[U_t^{n_{k}}(\varphi)\prod_{i=1}^d\alpha_i(U_{t_i}^{n_{k}})\prod_{j=1}^{d'}\alpha_j'(Y_{t_j'})\right]
=\tilde{\mathbb E}\left[U_t(\varphi)\prod_{i=1}^d\alpha_i(U_{t_i})\prod_{j=1}^{d'}\alpha_j'(Y_{t_j'})\right],\nonumber\\
&\lim_{k\rightarrow\infty}\tilde{\mathbb E}\left[U_s^{n_{k}}(\varphi)\prod_{i=1}^d\alpha_i(U_{t_i}^{n_{k}})\prod_{j=1}^{d'}\alpha_j'(Y_{t_j'})\right]
=\tilde{\mathbb E}\left[U_s(\varphi)\prod_{i=1}^d\alpha_i(U_{t_i})\prod_{j=1}^{d'}\alpha_j'(Y_{t_j'})\right].\nonumber
\end{align}

By Burkholder-Davis-Gundy inequality, we know that
$$
\sup_{n'}\tilde{\mathbb E}\left[\left(\int_0^tU_r^{n'}(A\varphi)dr\right)^2\right]<\infty;
$$
thus we have 
\begin{align}
\lim_{k\rightarrow\infty}\tilde{\mathbb E}\left[\int_s^tU_r^{n_{k}}(A\varphi)dr\prod_{i=1}^d\alpha_i(U_{t_i}^{n_{k}})\prod_{j=1}^{d'}\alpha_j'(Y_{t_j'})\right]
=\tilde{\mathbb E}\left[\int_s^tU_r(A\varphi)dr\prod_{i=1}^d\alpha_i(U_{t_i})\prod_{j=1}^{d'}\alpha_j'(Y_{t_j'})\right].\nonumber
\end{align}
Similarly, by Burkholder-Davis-Gundy inequality, we can show that
$$
\sup_{n'}\tilde{\mathbb E}\left[\left(\int_s^tU_r^{n'}(h\varphi)dY_r\right)^2\right]<\infty,
$$
we therefore have that (by Theorem 2.2 in \cite{Kurtz and Protter}), since
$(U^{n_{k}},Y)$ converges in distribution to $(U,Y)$, then $(U^{n_{k}},Y,\int_s^tU_r^{n_{k}}(h\varphi)dY_r)$
also converges in distribution to $(U,Y,\int_s^tU_r(h\varphi)dY_r)$, thus
we have
\begin{align}
\lim_{k\rightarrow\infty}\tilde{\mathbb E}\left[\int_s^tU_r^{n_{k}}(h\varphi)dY_r\prod_{i=1}^d\alpha_i(U_{t_i}^{n_{k}})\prod_{j=1}^{d'}\alpha_j'(Y_{t_j'})\right]
=\tilde{\mathbb E}\left[\int_s^tU_r(h\varphi)dY_r\prod_{i=1}^d\alpha_i(U_{t_i})\prod_{j=1}^{d'}\alpha_j'(Y_{t_j'})\right].\nonumber
\end{align}

For $\int_s^t\tilde\rho_r^1
\left(\Psi\varphi\right)dr$, we have
\begin{align}
\lim_{k\rightarrow\infty}\tilde{\mathbb E}\left[\Lambda_t^{{n_{k}},R^1,\varphi}\prod_{i=1}^d\alpha_i(U_{t_i}^{n_{k}})\prod_{j=1}^{d'}\alpha_j'(Y_{t_j'})\right]
=\tilde{\mathbb E}\left[\int_s^t\tilde\rho_r^1
\left(\Psi\varphi\right)dr\prod_{i=1}^d\alpha_i(U_{t_i})\prod_{j=1}^{d'}\alpha_j'(Y_{t_j'})\right].\nonumber
\end{align}
Now we have shown \eqref{eq.mean_equivalence}, and hence \eqref{eq.mean}.

In order to show the second equality \eqref{eq.variance}, we firstly make
the following observations about the limits of the terms in \eqref{eq.evolution_for_U^n}:
\begin{itemize}
\item
We have
\begin{equation}\label{eq.observation_one}
\lim_{n\rightarrow\infty}\left\langle\sqrt nA_.^{n,\varphi}\right\rangle_t=\sum_{i=1}^{[t/\delta]}\left(\rho_{i\delta}(\mathbf
1)\right)^2\left[\pi_{i\delta-}(\varphi^2)-\left(\pi_{i\delta-}(\varphi)\right)^2\right].
\end{equation}
If we let
\begin{align}\label{eq.branching}
\bar A_t^{\varphi}\triangleq\sum_{i=1}^{[t/\delta]}\rho_{i\delta}(\mathbf
1)\sqrt{\pi_{i\delta-}(\varphi^2)-\left(\pi_{i\delta-}(\varphi)\right)^2}\Upsilon_{i},
\end{align}
where $\{\Upsilon_i\}_{i\in\mathbb N}$ is a sequence of independent identically
distributed, standard normal random variables, and $\left\{\sqrt{\pi_{i\delta-}(\varphi^2)-\left(\pi_{i\delta-}(\varphi)\right)^2}\Upsilon_{i}\right\}_i$
are mutually independent given the $\sigma$-algebra $\mathcal Y$; then we
have
$
\langle\bar A_\cdot^{\varphi}\rangle_t=\lim_{n\rightarrow\infty}\left\langle\sqrt
nA_.^{n,\varphi}\right\rangle_t.
$
\item
For $G_{[t/\delta]}^{n,\varphi}$, we have
\begin{equation}\label{eq.observation_two}
\lim_{n\rightarrow\infty}\left|\sqrt nG_{[t/\delta]}^{n,\varphi}\right|=0\quad\text{a.s.}.
\end{equation}
\item
We have
\begin{align}\label{eq.observation_three}
\lim_{n\rightarrow\infty}\left\langle\frac{1}{\sqrt n}\sum_{j=1}^n\int_{0}^{\cdot}\xi_{[s/\delta]\delta}^na_j^n(s)R_{s,j}^2(\varphi)dY_s\right\rangle_t
\triangleq\lim_{n\rightarrow\infty}\left\langle\Lambda_\cdot^{n,R^2,\varphi}\right\rangle_t=\left\langle\Lambda_\cdot^{R^2,\varphi}\right\rangle_t,
\end{align}
where
\begin{equation}\label{eq.Lambda_R2}
\Lambda_t^{R^2,\varphi}=c_\omega\int_{0}^t\left(\tilde\rho_s^1(h\varphi''-(h\varphi)'')\right)dB_s^{(2)},
\end{equation}
$c_\omega$ is a constant and $B^{(2)}$ is a Brownian motion independent of
$Y$.
\item
We have that
\begin{align}\label{eq.observation_four}
\lim_{n\rightarrow\infty}\left\langle\frac{1}{\sqrt n}\sum_{j=1}^n\int_{0}^{\cdot}\xi_{[s/\delta]\delta}^na_j^n(s)R_{s,j}^3(\varphi)dV_s^{(j)}\right\rangle_t
\triangleq\lim_{n\rightarrow\infty}\left\langle\Lambda_\cdot^{n,R^3,\varphi}\right\rangle_t=\left\langle\Lambda_\cdot^{R^3,\varphi}\right\rangle_t,
\end{align}
where 
\begin{equation}
\Lambda_t^{R^3,\varphi}=\int_0^t\sqrt{\tilde\rho_s^2\left((\sigma\varphi')^2\right)}dB_s^{(3)},
\end{equation}
$B^{(3)}$ is a Brownian motion independent of $B^{(2)}$ and $Y$.
\end{itemize}
The proofs of these observations can be found in Appendix \ref{sec.proofs_of_quantities}.

From the above observations, we obtain that 
\begin{align}
&\tilde{\mathbb E}\Bigg[(\bar\Lambda_t^\varphi-\bar\Lambda_s^\varphi)^2\prod_{i=1}^d\alpha_i(U_{t_i})\prod_{j=1}^{d'}\alpha_j'(Y_{t_j'})\Bigg]\nonumber\\
=&\lim_{k\rightarrow\infty}\tilde{\mathbb E}\Bigg[\Bigg(\left(\left\langle\sqrt
nA_\cdot^{n_{k},\varphi}\right\rangle_t-\left\langle\sqrt
nA_\cdot^{n_{k},\varphi}\right\rangle_s\right)+\left(\left\langle\Lambda_\cdot^{n_{k},R^2,\varphi}\right\rangle_t-\left\langle\Lambda_\cdot^{n_{k},R^2,\varphi}\right\rangle_s\right)\nonumber\\
&\qquad\quad\quad+\left(\left\langle\Lambda_\cdot^{n_{k},R^3,\varphi}\right\rangle_t-\left\langle\Lambda_\cdot^{n_{k},R^3,\varphi}\right\rangle_s\right)\Bigg)\times\prod_{i=1}^d\alpha_i(U_{t_i}^{n_{k}})\prod_{j=1}^{d'}\alpha_j'(Y_{t_j'})\Bigg]\nonumber
\end{align}
\begin{align}
=&\lim_{k\rightarrow\infty}\tilde{\mathbb E}\Bigg[\Bigg(
\sum_{i=[s/\delta]+1}^{[t/\delta]}\left(\rho_{i\delta}^{n_{k}}(\mathbf 1)\right)^2\left[\pi_{i\delta-}^{n_{k}}(\varphi^2)-\left(\pi_{i\delta-}^{n_{k}}(\varphi)\right)^2\right]\nonumber\\
&\quad\quad\quad+\int_{s}^t\left(\tilde\rho_r^{n_{k},1}(h\varphi''-(h\varphi)'')\right)^2dr+\int_{s}^t{\tilde\rho_r^{n_{k},2}\left((\sigma\varphi')^2\right)}dr\Bigg)\times
\prod_{i=1}^d\alpha_i(U_{t_i}^{n_{k}})\prod_{j=1}^{d'}\alpha_j'(Y_{t_j'})\Bigg]\nonumber\\
=&\tilde{\mathbb E}\Bigg[\Bigg(
\sum_{i=[s/\delta]+1}^{[t/\delta]}\left(\rho_{i\delta}(\mathbf 1)\right)^2\left[\pi_{i\delta-}(\varphi^2)-\left(\pi_{i\delta-}(\varphi)\right)^2\right]\nonumber\\
&\qquad\quad\quad+\int_{s}^t\left(\tilde\rho_r^{1}(h\varphi''-(h\varphi)'')\right)^2dr+\int_{s}^t{\tilde\rho_r^{2}\left((\sigma\varphi')^2\right)}dr\Bigg)\times
\prod_{i=1}^d\alpha_i(U_{t_i})\prod_{j=1}^{d'}\alpha_j'(Y_{t_j'})\Bigg]\nonumber\\
=&\tilde{\mathbb E}\left[\left(\langle\bar\Lambda.^\varphi\rangle_t-\langle\bar\Lambda.^\varphi\rangle_s\right)\prod_{i=1}^d\alpha_i(U_{t_i})\prod_{j=1}^{d'}\alpha_j'(Y_{t_j'})\right];
\end{align}
and \eqref{eq.variance} follows from this identity.
\end{proof}

\begin{corollary}
Under Assumption (A), for and $t\geq0$ define
$
\bar U_t^n\triangleq\sqrt n\left(\pi_t^n-\pi_t\right).
$
Then $\{\bar U^n\}_n$ converges in distribution to a unique $D_{\mathcal
M_F(\overline{\mathbb R})}[0,\infty)$-valued process $\bar U=\{\bar U_t:t\geq0\}$,
such that, for any test function $\varphi\in C_b^6(\overline{\mathbb R})$,
\begin{equation}
\bar U_t(\varphi)=\frac{1}{\rho_t(\mathbf 1)}\left(U_t(\varphi)-\pi_t(\varphi)U_t(\mathbf1)\right),
\end{equation}
where $U$ satisfies \eqref{eq.evolution_of_U}.
\end{corollary}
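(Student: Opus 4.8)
The plan is to reduce everything to the unnormalised level, where Theorem \ref{thm.main_result} already identifies the limit of $U^n$, and then push that convergence through the normalisation map by a continuous-mapping (Slutsky-type) argument. Recall the Kallianpur--Striebel identity $\pi_t=\rho_t/\rho_t(\mathbf 1)$; the analogous relation holds at the approximating level, because each $\pi_t^n$ is a genuine probability measure, so $\pi_t^n(\mathbf 1)=1$, whence $\rho_t^n(\mathbf 1)=\xi_t^n\pi_t^n(\mathbf 1)=\xi_t^n$ and $\pi_t^n=\rho_t^n/\rho_t^n(\mathbf 1)$. First I would record the elementary pathwise identity relating $\bar U^n$ to $U^n$. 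Expanding the two quotients and adding and subtracting $\rho_t(\varphi)\rho_t(\mathbf 1)$ in the numerator gives, for every $\varphi\in C_b^6(\overline{\mathbb R})$,
\begin{align}
\bar U_t^n(\varphi)
=\sqrt n\left(\frac{\rho_t^n(\varphi)}{\rho_t^n(\mathbf 1)}-\frac{\rho_t(\varphi)}{\rho_t(\mathbf 1)}\right)
=\frac{\rho_t(\mathbf 1)U_t^n(\varphi)-\rho_t(\varphi)U_t^n(\mathbf 1)}{\rho_t^n(\mathbf 1)\,\rho_t(\mathbf 1)}
=\frac{1}{\rho_t^n(\mathbf 1)}\left(U_t^n(\varphi)-\pi_t(\varphi)U_t^n(\mathbf 1)\right).\nonumber
\end{align}
This already has the structure of the claimed limit, with $\rho_t^n(\mathbf 1)$ in place of $\rho_t(\mathbf 1)$.

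Second, I would pass to the limit in this identity. By Theorem \ref{thm.main_result} the process $U^n$ converges in distribution to $U$ in $D_{\mathcal M_F(\overline{\mathbb R})}[0,\infty)$, so in particular the evaluations $\bigl(U^n(\varphi),U^n(\mathbf 1)\bigr)$ converge jointly to $\bigl(U(\varphi),U(\mathbf 1)\bigr)$ (note $\tilde f_0\equiv\mathbf 1$ lies in the admitted test-function class). Moreover the $L^2$-convergence of $\rho^n$ to $\rho$ (Theorem 4.18 in \cite{Crisan and Li 2}) yields $\rho_t^n(\mathbf 1)=\rho_t(\mathbf 1)+n^{-1/2}U_t^n(\mathbf 1)\to\rho_t(\mathbf 1)$ in probability, uniformly on $[0,T]$, since $\tilde{\mathbb E}\bigl[\sup_t(U_t^n(\mathbf 1))^2\bigr]$ is bounded; because $\rho_t(\mathbf 1)>0$ $\tilde{\mathbb P}$-a.s., the reciprocal $1/\rho_t^n(\mathbf 1)$ converges to $1/\rho_t(\mathbf 1)$ as well. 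Combining the convergence in distribution of $U^n$ with the convergence in probability of $1/\rho^n(\mathbf 1)$ (a Slutsky argument) gives
\begin{align}
\bar U_t^n(\varphi)\ \xrightarrow{\ d\ }\ \frac{1}{\rho_t(\mathbf 1)}\left(U_t(\varphi)-\pi_t(\varphi)U_t(\mathbf 1)\right)=\bar U_t(\varphi),\nonumber
\end{align}
and the representation manifestly shows that $\bar U$ is a deterministic measurable functional of $U$, whose law is unique by Theorem \ref{thm.main_result}; hence $\bar U$ is unique in distribution.

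Third, to upgrade pointwise-in-$t$ convergence to convergence in distribution of the \emph{whole} process in $D_{\mathcal M_F(\overline{\mathbb R})}[0,\infty)$, I would realise $\bar U^n$ as the image of the pair $(U^n,\rho^n)$ under the normalisation map $\Phi\colon(\mu,\nu)\mapsto\nu(\mathbf 1)^{-1}\bigl(\mu(\cdot)-\pi_\cdot(\cdot)\,\mu(\mathbf 1)\bigr)$ and invoke the continuous mapping theorem in the Skorokhod topology. The delicate point, which I expect to be the main obstacle, is the continuity of $\Phi$: division by $\nu(\mathbf 1)$ is continuous only on the set of càdlàg paths whose total mass stays bounded away from $0$, so I must control $\inf_{t\in[0,T]}\rho_t(\mathbf 1)$ and verify that the limiting law of $(U,\rho)$ is carried by this good set. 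This, together with the joint convergence $(U^n,\rho^n)\to(U,\rho)$ in distribution (which follows since $\rho^n\to\rho$ in probability while $U^n\to U$ in distribution), yields a.s.\ continuity of $\Phi$ at the limit; the continuous mapping theorem then delivers the process-level convergence $\bar U^n\to\bar U$ and confirms that the limit is a well-defined $D_{\mathcal M_F(\overline{\mathbb R})}[0,\infty)$-valued process.
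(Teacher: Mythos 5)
Your proposal is correct and follows essentially the same route as the paper: the paper's one-line proof rests on the algebraic identity $\pi_t^n(\varphi)-\pi_t(\varphi)=\rho_t(\mathbf 1)^{-1}\left(\rho_t^n(\varphi)-\rho_t(\varphi)\right)-\pi_t^n(\varphi)\rho_t(\mathbf 1)^{-1}\left(\rho_t^n(\mathbf 1)-\rho_t(\mathbf 1)\right)$ --- a trivial rearrangement of yours, with the $n$-dependent factor placed in the coefficient $\pi_t^n(\varphi)$ rather than in the denominator $\rho_t^n(\mathbf 1)$ --- combined with the almost-sure convergences $\rho_t^n(\varphi)\to\rho_t(\varphi)$ and $\pi_t^n(\varphi)\to\pi_t(\varphi)$ and the same Slutsky reasoning. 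Your third step, on the continuity of the normalisation map along paths whose total mass stays away from zero, addresses a process-level point that the paper's proof passes over in silence, so it is a refinement of, rather than a departure from, the published argument.
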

\begin{proof}
By the fact that
$$
\pi_t^n(\varphi)-\pi_t(\varphi)=\frac{1}{\rho_t(\mathbf1)}(\rho_t^n(\varphi)-\rho_t(\varphi))-\frac{\pi_t^n(\varphi)}{\rho_t(\mathbf1)}(\rho_t^n(\mathbf1)-\rho_t(\mathbf1)),
$$
and $\rho_t^n(\varphi)\rightarrow\rho_t(\varphi),\ \text{a.s.}$ and $\pi_t^n(\varphi)\rightarrow\pi_t(\varphi)\
\text{a.s.}$ (see Remark \ref{rmk.almost_sure_convergence}),
we have the result.
\end{proof}

\begin{remark}
The central limit theorem in this paper is proven in Sections 4 and 5 with $\varepsilon=1/2$. However, it should be noted that the result also holds when $\varepsilon\in(0,1/2)$, and the corresponding proof is similar. Therefore the proof of the main result of the paper, Theorem \ref{thm.main_CLT}, is completed without additional arguments for different $\varepsilon$.
\end{remark}

\begin{remark}
In this chapter we view $\{U^n\}_{n\in\mathbb N}$ and its weak limit $\{U\}$
as processes with sample paths in $D_{\mathcal M_F(\overline{\mathbb R})}[0,\infty)$,
which is complete and separable. 
In fact, $U$ takes value in a smaller
space $\mathcal M_F(\mathbb R)$ (i.e. $U$ is a $D_{\mathcal M_F({\mathbb
R})}[0,\infty)$-valued random variable). In other words, $U$ has no mass
`escaping' to infinity.
This is shown by using the same approach as in Section 5 in \cite{Crisan 4}.

Since the weak topology on ${\mathcal M_F({\mathbb
R})}$ coincides with the trace topology from ${\mathcal M_F(\overline{\mathbb
R})}$ to ${\mathcal M_F({\mathbb R})}$, it follows that $U$ has sample paths
in $D_{\mathcal M_F({\mathbb R})}[0,\infty)$. It then suffices to show that
that for arbitrary $t$, there exists a sequence of compact sets $\{K_p\}_{p>0}\in\mathbb
R$ (possibly depending on t) which exhaust $\mathbb R$ such that for all
$\varepsilon>0$,
\begin{equation}\label{eq.compact_R}
\lim_{p\rightarrow\infty}\tilde{\mathbb P}\left[\sup_{s\in[0,t]}\left(U_s(\mathbf1_{K_p^c})\right)\geq\varepsilon\right]=0,
\end{equation}
where $K_p^c$ denotes the compliment of $K_p$. 
The proof of \eqref{eq.compact_R} can be found in Section 5 in \cite{Crisan
4}.
\end{remark}

\section{Conclusions}
In this paper, we analyse the Gaussian mixture approximations to the solution of the nonlinear filtering problem. In addition to the $L^2$-convergence result obtained in \cite{Crisan and Li 2}, we prove a central limit type theorem of the Gaussian mixture approximation, and find that the optimal value for the parameter $\varepsilon$, which measures the ``Gaussianity'' of the approximating system, is $1/2$. It can be seen that, asymptotically (as $n\rightarrow\infty$), the mean square error between the approximating measure and the true solution of the filtering problem is (roughly) of order $1/n$, and the recalibrated error converges in distribution to a unique measure-valued process.

It should also be noted that the central limit theorem obtained in this paper is based on the approximating system under which the Multinomial branching algorithm is chosen. It is also worth studying the central limit theorem for the approximating system under the Tree Based Branching Algorithm, and this is left as future work.

\appendix
\addappheadtotoc

\section{Appendix}

\subsection{Almost sure limits of $\pi^n$ and $\rho^n$}
\begin{lemma}\label{lem.asymptotics_of_omega}
If the approximation $\pi^n$ is defined by \eqref{eq.gaussian_mixture_approximation},
in other words,
$$
\pi_t^n(\varphi)=\sum_{j=1}^n\bar a_j^n(t)\int_{\mathbb R}\varphi\left(v_j^n(t)+y\sqrt{\omega_j^n(t)}\right)
\frac{1}{\sqrt{2\pi}}\exp\left(-\frac{y^2}{2}\right)dy;
$$ 
then we have
\begin{equation}
\pi_t(\varphi)=\lim_{n\rightarrow\infty}\pi_t^n(\varphi)=\lim_{n\rightarrow\infty}\sum_{j=1}^n\bar
a_j^n(t)\varphi(v_j^n(t)).
\end{equation}
That is, asymptotically, the variances of  the Gaussian measures do
not contribute to the approximation, and the combination of positions and
weights provide a good approximation. 
\end{lemma}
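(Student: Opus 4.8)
The plan is to show that the Gaussian smoothing present in $\pi_t^n$ is asymptotically negligible, so that $\pi_t^n(\varphi)$ and the purely atomic quantity $\sum_{j=1}^n\bar a_j^n(t)\varphi(v_j^n(t))$ share the same limit $\pi_t(\varphi)$. The first equality $\pi_t(\varphi)=\lim_n\pi_t^n(\varphi)$ is the almost sure convergence already recorded in Remark~\ref{rmk.almost_sure_convergence} (and established in \cite{Crisan and Li 2}), so the entire content is to control the difference
\[
D_t^n(\varphi)\triangleq\pi_t^n(\varphi)-\sum_{j=1}^n\bar a_j^n(t)\varphi(v_j^n(t))=\sum_{j=1}^n\bar a_j^n(t)\int_{\mathbb R}\Big[\varphi\big(v_j^n(t)+y\sqrt{\omega_j^n(t)}\big)-\varphi(v_j^n(t))\Big]\frac{e^{-y^2/2}}{\sqrt{2\pi}}dy.
\]

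First I would expand each integrand by Taylor's formula about $v_j^n(t)$. The first-order term is odd in $y$ and therefore integrates to zero against the standard Gaussian density, leaving only a second-order remainder. Since $\varphi\in C_b^2(\mathbb R)$ (indeed $C_b^6$ in our setting) and $\int_{\mathbb R}y^2\frac{e^{-y^2/2}}{\sqrt{2\pi}}dy=1$, this yields the per-particle bound
\[
\left|\int_{\mathbb R}\Big[\varphi\big(v_j^n(t)+y\sqrt{\omega_j^n(t)}\big)-\varphi(v_j^n(t))\Big]\frac{e^{-y^2/2}}{\sqrt{2\pi}}dy\right|\leq\frac{1}{2}\omega_j^n(t)\|\varphi\|_{2,\infty},
\]
which is crucially independent of the (random) position $v_j^n(t)$.

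Next I would invoke the uniform control on the variances. From \eqref{eq.appro_with_variance} and Assumption (A) the variances satisfy $\omega_j^n(t)=\alpha\big(\beta+\int_{i\delta}^t\sigma^2(v_j^n(s))ds\big)\leq\alpha\big(\beta+\delta\|\sigma\|_{0,\infty}^2\big)$ for every $j$ and every $t\in[0,T]$, a deterministic bound. With $\alpha=n^{-\varepsilon}$ and $\sum_{j=1}^n\bar a_j^n(t)=1$, summing the per-particle estimate gives
\[
|D_t^n(\varphi)|\leq\frac{1}{2}n^{-\varepsilon}\big(\beta+\delta\|\sigma\|_{0,\infty}^2\big)\|\varphi\|_{2,\infty}\sum_{j=1}^n\bar a_j^n(t)=\frac{1}{2}n^{-\varepsilon}\big(\beta+\delta\|\sigma\|_{0,\infty}^2\big)\|\varphi\|_{2,\infty}\longrightarrow0,\quad\text{as }n\to\infty.
\]
Because this bound is deterministic, the almost sure convergence $\pi_t^n(\varphi)\to\pi_t(\varphi)$ transfers verbatim to $\sum_{j=1}^n\bar a_j^n(t)\varphi(v_j^n(t))$, establishing both equalities of the lemma.

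The calculation is short, and the only delicate point is uniformity: the per-particle error must be bounded independently of the position $v_j^n(t)$ (so that the growing number $n$ of particles does not spoil the sum) and independently of the randomness in $\omega_j^n$. This is exactly what the boundedness of $\varphi''$ together with the boundedness of $\sigma$ supplies, and it is the main---though mild---obstacle; everything else is a direct Taylor estimate summed against weights that total one.
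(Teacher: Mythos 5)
Your core estimate is correct and is, in substance, the natural proof of this lemma; note that the paper itself gives no argument here but simply defers to Appendix B of \cite{Li}, so you are supplying a proof where the paper only cites one. The Taylor expansion about $v_j^n(t)$, the cancellation of the odd first-order term against the standard Gaussian, the per-particle bound $\tfrac12\omega_j^n(t)\|\varphi\|_{2,\infty}$ uniform in the random position, and the deterministic variance bound $\omega_j^n(t)\leq n^{-\varepsilon}(\beta+\delta\|\sigma\|_{0,\infty}^2)$ summed against weights totalling one are all sound, and they establish cleanly that $\pi_t^n(\varphi)$ and $\sum_j\bar a_j^n(t)\varphi(v_j^n(t))$ differ by a deterministic $O(n^{-\varepsilon})$, hence share whatever limit either possesses.

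The one point to be careful about is the direction of the logical dependence when you identify that common limit as $\pi_t(\varphi)$. You attribute the first equality to Remark \ref{rmk.almost_sure_convergence}, but that remark \emph{derives} the almost sure convergence $\pi_t^n(\varphi)\to\pi_t(\varphi)$ from the present lemma together with the classical particle-filter theory of Chapter 8 of \cite{Bain and Crisan}; citing it here is circular within the paper's own architecture. The intended (non-circular) route runs the other way: first establish that the atomic approximation $\sum_j\bar a_j^n(t)\varphi(v_j^n(t))$ converges to $\pi_t(\varphi)$ --- this is essentially the classical particle filter, up to the perturbation $\sqrt{1-\alpha}\,\sigma$ of the diffusion coefficient with $\alpha=n^{-\varepsilon}\to 0$ and the Gaussian resampling positions $X_j^n$, both of which need a word --- and then transfer that convergence to $\pi_t^n(\varphi)$ via your bound on $D_t^n(\varphi)$. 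Alternatively you may invoke the convergence of $\pi_t^n$ proved independently in \cite{Crisan and Li 2}, but you should then check that the mode of convergence obtained there (the stated result is an $L^2$ rate) suffices for the almost sure statement you want. Either way the analytic heart of your argument stands; only the bookkeeping of which convergence is assumed and which is concluded needs to be straightened out.
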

\begin{proof}
See Appendix B in \cite{Li}.
\end{proof}

As a direct consequence, we have the following corollary for the unnormalised
approximation $\rho^n$:
\begin{corollary}\label{coro.asymptotics_of_omega}
If the approximation $\rho^n$ is defined as
$$
\rho_t^n(\varphi)=\xi_t^n\pi_t^n(\varphi)=\xi_{t}^n\sum_{j=1}^n\bar a_j^n(t)\int_{\mathbb
R}\varphi\left(v_j^n(t)+y\sqrt{\omega_j^n(t)}\right)
\frac{1}{\sqrt{2\pi}}\exp\left(-\frac{y^2}{2}\right)dy;
$$
then we have
\begin{equation}
\rho_t(\varphi)=\lim_{n\rightarrow\infty}\rho_t^n(\varphi)=\lim_{n\rightarrow\infty}\xi_t^n\sum_{j=1}^n\bar
a_j^n(t)\varphi(v_j^n(t)).
\end{equation}
\end{corollary}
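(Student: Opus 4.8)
The plan is to reduce the statement to Lemma~\ref{lem.asymptotics_of_omega} through the defining identity $\rho_t^n(\varphi)=\xi_t^n\pi_t^n(\varphi)$, the only new difficulty being that multiplying by the martingale factor $\xi_t^n$ is not pathwise harmless since $\xi_t^n$ is not bounded. Accordingly, I would first set
\[
D_n\triangleq\xi_t^n\left(\pi_t^n(\varphi)-\sum_{j=1}^n\bar a_j^n(t)\varphi(v_j^n(t))\right),
\]
so that $\xi_t^n\sum_{j}\bar a_j^n(t)\varphi(v_j^n(t))=\rho_t^n(\varphi)-D_n$. Since $\rho_t^n(\varphi)\to\rho_t(\varphi)$ $\tilde{\mathbb P}$-a.s.\ is already known (Section 4 of \cite{Crisan and Li 2}, also used in Lemma~\ref{lem.preliminary}), the whole task is to show that $D_n\to0$ $\tilde{\mathbb P}$-a.s.

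The second step is to obtain a quantitative rate for the Gaussian-smoothing error. Writing $\varphi(v_j^n(t)+y\sqrt{\omega_j^n(t)})=\varphi(v_j^n(t))+y\sqrt{\omega_j^n(t)}\,\varphi'(v_j^n(t))+\tfrac12 y^2\omega_j^n(t)\varphi''(\zeta)$ with the integral/mean-value remainder and integrating against the standard normal density, the odd linear term vanishes while $\int y^2(2\pi)^{-1/2}e^{-y^2/2}dy=1$ yields the per-particle bound
\[
\left|\int_{\mathbb R}\varphi\!\left(v_j^n(t)+y\sqrt{\omega_j^n(t)}\right)\tfrac{1}{\sqrt{2\pi}}e^{-y^2/2}dy-\varphi(v_j^n(t))\right|\leq\tfrac12\,\omega_j^n(t)\,\|\varphi''\|_\infty.
\]
From the definition of $\omega_j^n$ in \eqref{eq.appro_with_variance} together with $\alpha=n^{-1/2}$, boundedness of $\sigma$, and $t-i\delta\leq\delta$, one has $\max_j\omega_j^n(t)\leq n^{-1/2}(\beta+\|\sigma\|_\infty^2\delta)$. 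Summing against the normalised weights, which satisfy $\sum_j\bar a_j^n(t)=1$, gives $\big|\pi_t^n(\varphi)-\sum_j\bar a_j^n(t)\varphi(v_j^n(t))\big|\leq C\,n^{-1/2}\|\varphi''\|_\infty$, hence $|D_n|\leq C\,n^{-1/2}\|\varphi''\|_\infty\,\xi_t^n$.

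The main obstacle, and the final step, is converting this into almost sure convergence despite the unbounded factor $\xi_t^n$: here Lemma~\ref{lem.asymptotics_of_omega} alone is insufficient, because it only controls the bracket (a probability measure, where $\xi_t^n$ does not appear) and does not survive multiplication by $\xi_t^n$. The remedy is the uniform moment bound $\sup_n\tilde{\mathbb E}[(\xi_t^n)^p]\leq c_1^{t,p}$ recorded before \eqref{eq.bound_for_xi_2}. Taking $p=4$ and applying Markov's inequality,
\[
\tilde{\mathbb P}\big(|D_n|>\eta\big)\leq\frac{\tilde{\mathbb E}[|D_n|^4]}{\eta^4}\leq\frac{C^4\|\varphi''\|_\infty^4\,c_1^{t,4}}{\eta^4\,n^2},
\]
which is summable in $n$, so Borel--Cantelli yields $D_n\to0$ $\tilde{\mathbb P}$-a.s. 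Combining this with the a.s.\ convergence $\rho_t^n(\varphi)\to\rho_t(\varphi)$ gives $\xi_t^n\sum_j\bar a_j^n(t)\varphi(v_j^n(t))\to\rho_t(\varphi)$ $\tilde{\mathbb P}$-a.s., which is the assertion. I expect the quantitative $n^{-1/2}$ rate on the smoothing error, rather than the qualitative limit in Lemma~\ref{lem.asymptotics_of_omega}, to be the decisive ingredient that makes the $\xi_t^n$-multiplication tractable.
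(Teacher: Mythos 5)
Your argument is correct, but it is a genuinely different (and more self-contained) route than the paper's. The paper treats the corollary as an immediate consequence of Lemma~\ref{lem.asymptotics_of_omega}: since $\xi_t^n=\rho_t^n(\mathbf 1)$ converges $\tilde{\mathbb P}$-a.s.\ (Remark~\ref{rmk.almost_sure_convergence}), it is pathwise bounded in $n$, so multiplying the a.s.\ limit $\pi_t^n(\varphi)-\sum_j\bar a_j^n(t)\varphi(v_j^n(t))\to0$ by $\xi_t^n$ is in fact harmless and no moment estimates are needed --- your stated worry that the multiplication is ``not pathwise harmless'' overstates the difficulty, since a.s.\ convergent sequences are a.s.\ bounded. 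What your proof buys in exchange is substance the paper outsources: you do not invoke Lemma~\ref{lem.asymptotics_of_omega} at all (whose proof is deferred to an external thesis), but instead derive the quantitative per-particle smoothing bound $\tfrac12\omega_j^n(t)\Vert\varphi''\Vert_\infty$ with $\max_j\omega_j^n(t)=O(n^{-1/2})$, and then convert to a.s.\ convergence via the fourth-moment bound on $\xi_t^n$ from \eqref{eq.bound_for_xi_2} and Borel--Cantelli. This is rigorous and arguably supplies the missing content of both the lemma and the corollary. Two small caveats: your Taylor step needs $\varphi\in C_b^2$, which should be stated (harmless here, as test functions are in $C_b^6$); and the a.s.\ convergence $\rho_t^n(\varphi)\to\rho_t(\varphi)$ does not follow from the $L^2$ rate of Section~4 of the cited work alone (the tail bound $c/(n\eta^2)$ is not summable) --- it is supplied in this paper by Remark~\ref{rmk.almost_sure_convergence} via the classical particle-filter theory, so that is the citation you should lean on.
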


\begin{remark}\label{rmk.almost_sure_convergence}
By Lemma \ref{lem.asymptotics_of_omega} we know asymptotically as $n\rightarrow\infty$,
the Gaussian mixture approximation performs just as good as the classic particle
filters. Furthermore, from Chapter 8 in \cite{Bain and Crisan} and Lemma
\ref{lem.asymptotics_of_omega}, we know that
$$
\rho_t^n(\varphi)\rightarrow\rho_t(\varphi)\quad\text{and}\quad\pi_t^n(\varphi)\rightarrow\pi_t(\varphi)\quad\text{almost
surely}.
$$
\end{remark}

\subsection{Proof of \eqref{eq.observation_one}, \eqref{eq.observation_two}, \eqref{eq.observation_three}, and \eqref{eq.observation_four}}\label{sec.proofs_of_quantities}
\begin{lemma}[\eqref{eq.observation_one}]
Assume the conditions in Proposition \ref{prop.verify_condition2} hold, then
\begin{equation}
\lim_{n\rightarrow\infty}\left\langle\sqrt nA_.^{n,\varphi}\right\rangle_t=\sum_{i=1}^{[t/\delta]}\left(\rho_{i\delta}(\mathbf
1)\right)^2\left[\pi_{i\delta-}(\varphi^2)-\left(\pi_{i\delta-}(\varphi)\right)^2\right].
\end{equation}
If we let
\begin{align}
\bar A_t^{\varphi}\triangleq\sum_{i=1}^{[t/\delta]}\rho_{i\delta}(\mathbf
1)\sqrt{\pi_{i\delta-}(\varphi^2)-\left(\pi_{i\delta-}(\varphi)\right)^2}\Upsilon_{i},
\end{align}
where $\{\Upsilon_i\}_{i\in\mathbb N}$ is a sequence of independent identically
distributed, standard normal random variables, and $\left\{\sqrt{\pi_{i\delta-}(\varphi^2)-\left(\pi_{i\delta-}(\varphi)\right)^2}\Upsilon_{i}\right\}_i$
are mutually independent given the $\sigma$-algebra $\mathcal Y$; then we
have
$
\langle\bar A_\cdot^{\varphi}\rangle_t=\lim_{n\rightarrow\infty}\left\langle\sqrt
nA_.^{n,\varphi}\right\rangle_t.
$
\end{lemma}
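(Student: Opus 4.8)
The plan is to exploit the fact that $\sqrt n A_\cdot^{n,\varphi}$ is a pure-jump $\tilde{\mathbb P}$-martingale whose only jumps occur at the resampling times $i\delta$, so that its predictable quadratic variation is the finite sum over $i\leq[t/\delta]$ of the conditional second moments of its jumps. Writing the jump $\Delta A_i$ of $A^{n,\varphi}$ at time $i\delta$ in the form $\frac{\xi_{i\delta}^n}{n}\sum_{j=1}^n(o_j^{n,i\delta}-n\bar a_j^n(i\delta-))\varphi(X_j^n(i\delta))$ up to the remainder $\sqrt n G^{n,\varphi}$, which is negligible by \eqref{eq.observation_two}, and observing that $\xi_{i\delta}^n$ is $\mathcal F_{i\delta-}$-measurable while each jump has zero conditional mean, I would reduce the claim to $\langle\sqrt n A_\cdot^{n,\varphi}\rangle_t=\sum_{i=1}^{[t/\delta]}n\,\tilde{\mathbb E}[(\Delta A_i)^2\mid\mathcal F_{i\delta-}]$. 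The first step is then to compute each conditional second moment by the law of total variance, conditioning on the $\sigma$-algebra $\mathcal G$ generated by $\mathcal F_{i\delta-}$ together with the sampled means $\{X_j^n(i\delta)\}_j$.

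Conditionally on $\mathcal G$ the offspring counts $(o_j^{n,i\delta})_j$ are multinomial with parameters $n$ and $(\bar a_j^n(i\delta-))_j$, so $\tilde{\mathbb E}[o_j^{n,i\delta}\mid\mathcal G]=n\bar a_j^n$ and their covariance is $n\bar a_j^n(\delta_{jk}-\bar a_k^n)$. Hence the outer contribution $\tilde{\mathbb E}[\Delta A_i\mid\mathcal G]$ vanishes identically, and substituting the covariance gives $n\,\tilde{\mathbb E}[(\Delta A_i)^2\mid\mathcal F_{i\delta-}]=(\xi_{i\delta}^n)^2\,\tilde{\mathbb E}\big[\sum_j\bar a_j^n\varphi(X_j^n)^2-(\sum_j\bar a_j^n\varphi(X_j^n))^2\mid\mathcal F_{i\delta-}\big]$. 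The key reductions are: (i) since $\alpha=n^{-1/2}\to0$ the conditional variances $\omega_j^n(i\delta-)$ of the $X_j^n$ vanish, so by Assumption (A) and dominated convergence $\tilde{\mathbb E}[\varphi(X_j^n)^2]\to\varphi(v_j^n)^2$ and $\tilde{\mathbb E}[\varphi(X_j^n)]\to\varphi(v_j^n)$; (ii) expanding the square and using the mutual independence of the $X_j^n$ across $j$, the only genuinely quadratic-in-$X_j^n$ correction is the diagonal term $\sum_j(\bar a_j^n)^2(\tilde{\mathbb E}[\varphi(X_j^n)^2]-(\tilde{\mathbb E}[\varphi(X_j^n)])^2)=O(1/n)$, which vanishes; (iii) Lemma \ref{lem.asymptotics_of_omega} identifies $\sum_j\bar a_j^n\varphi(v_j^n)^2\to\pi_{i\delta-}(\varphi^2)$ and $\sum_j\bar a_j^n\varphi(v_j^n)\to\pi_{i\delta-}(\varphi)$; and (iv) $\xi_{i\delta}^n=\rho_{i\delta}^n(\mathbf1)\to\rho_{i\delta}(\mathbf1)$ almost surely by Remark \ref{rmk.almost_sure_convergence}. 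Collecting these term by term yields $n\,\tilde{\mathbb E}[(\Delta A_i)^2\mid\mathcal F_{i\delta-}]\to(\rho_{i\delta}(\mathbf1))^2[\pi_{i\delta-}(\varphi^2)-(\pi_{i\delta-}(\varphi))^2]$, and summation over the finitely many $i\leq[t/\delta]$ gives the stated limit.

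The main obstacle will be controlling the error terms uniformly in $n$ so that the interchange of the limit with the conditional expectation is legitimate: step (i) produces remainders of order $\omega_j^n\sim n^{-1/2}$ that must be shown to remain summable after multiplication by the weights $\bar a_j^n$ and the mass factor $(\xi_{i\delta}^n)^2$, for which I would invoke the uniform moment bounds \eqref{eq.bound_for_xi_2} on $\xi_{i\delta}^na_j^n$. Care is also needed to confirm that the fluctuations of the sampled means $X_j^n$ about $v_j^n$ contribute only at order $1/n$ and not to leading order; this is precisely the diagonal correction in (ii), and its vanishing is what singles out the multinomial resampling as the sole source of the limiting quadratic variation.

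For the second assertion I would compute $\langle\bar A_\cdot^\varphi\rangle_t$ directly. Since $\bar A^\varphi$ is a pure-jump process whose $i$-th jump $\rho_{i\delta}(\mathbf1)\sqrt{\pi_{i\delta-}(\varphi^2)-(\pi_{i\delta-}(\varphi))^2}\,\Upsilon_i$ has zero conditional mean, and the $\Upsilon_i$ are standard normal and conditionally independent given $\mathcal Y$, its predictable quadratic variation is obtained by replacing each $\Upsilon_i^2$ by its conditional expectation $1$. This gives exactly $\sum_{i=1}^{[t/\delta]}(\rho_{i\delta}(\mathbf1))^2[\pi_{i\delta-}(\varphi^2)-(\pi_{i\delta-}(\varphi))^2]$, which coincides with the limit computed above, so that $\langle\bar A_\cdot^\varphi\rangle_t=\lim_{n\to\infty}\langle\sqrt n A_\cdot^{n,\varphi}\rangle_t$ as required.
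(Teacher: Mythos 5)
Your proposal is correct and follows essentially the same route as the paper: identify $\left\langle\sqrt nA_\cdot^{n,\varphi}\right\rangle_t$ as the sum over resampling times of the conditional variances of the multinomial jumps, which gives $(\xi_{i\delta}^n)^2\bigl[\sum_j\bar a_j^n\varphi(X_j^n)^2-(\sum_j\bar a_j^n\varphi(X_j^n))^2\bigr]$, and then pass to the limit using Lemma \ref{lem.asymptotics_of_omega} and Remark \ref{rmk.almost_sure_convergence}. The only difference is that you spell out the law-of-total-variance bookkeeping and the $O(n^{-1/2})$ diagonal correction from the sampled means, which the paper leaves implicit; this is a welcome elaboration rather than a different argument.
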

\begin{proof}
Note that $A^{n,\varphi}$ is a discrete time martingale, then
\begin{align}
\lim_n\left\langle\sqrt nA_.^{n,\varphi}\right\rangle_t
=&\lim_n\sum_{i=1}^{[t/\delta]}(\rho_{i\delta}^n(\mathbf 1))^2\left[\sum_{j=1}^n\bar
a_{j}^n(i\delta-)\left(\varphi(X_{j}^n(i\delta))\right)^2-\left(\sum_{j=1}^n\bar
a_{j}^n(i\delta-)\varphi(X_{j}^n(i\delta))\right)^2\right]\nonumber\\
=&\sum_{i=1}^{[t/\delta]}\left(\rho_{i\delta}(\mathbf 1)\right)^2\left[\pi_{i\delta-}(\varphi^2)-\left(\pi_{i\delta-}(\varphi)\right)^2\right],\nonumber
\end{align}
here we made use of Lemma \ref{lem.asymptotics_of_omega} and Remark \ref{rmk.almost_sure_convergence}.

The second part of the lemma is obvious. 
\end{proof}

\begin{lemma}[\eqref{eq.observation_two}]
Assume the conditions in Proposition \ref{prop.verify_condition2} hold, then
\begin{equation}
\lim_{n\rightarrow\infty}\left|\sqrt nG_{[t/\delta]}^{n,\varphi}\right|=0\quad\text{a.s.}.
\end{equation}
\end{lemma}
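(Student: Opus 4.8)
The plan is to read $\sqrt nG_{[t/\delta]}^{n,\varphi}$ as the remainder left once the dominant resampling noise $\sqrt nA_{\cdot}^{n,\varphi}$ (whose quadratic variation is identified in \eqref{eq.observation_one}) has been stripped out of the branching martingale $n^\varepsilon M_{[t/\delta]}^{n,\varphi}$ of \eqref{eq.M_n_varepsilon}. At each correction time $i\delta$ I would rewrite the summand $o_j^{n,i\delta}\Gamma_{X_j^n(i\delta),\alpha\beta}(\varphi)-n\bar a_j^n(i\delta-)\Gamma_{v_j^n(i\delta-),\omega_j^n(i\delta-)}(\varphi)$ as the point-mass increment $\bigl(o_j^{n,i\delta}-n\bar a_j^n(i\delta-)\bigr)\varphi(X_j^n(i\delta))$ — which is precisely the increment of $A^{n,\varphi}$ — plus the two Gaussian-mixture corrections $o_j^{n,i\delta}\bigl[\Gamma_{X_j^n(i\delta),\alpha\beta}(\varphi)-\varphi(X_j^n(i\delta))\bigr]$ and $n\bar a_j^n(i\delta-)\bigl[\varphi(X_j^n(i\delta))-\Gamma_{v_j^n(i\delta-),\omega_j^n(i\delta-)}(\varphi)\bigr]$. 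These two pieces are exactly what enters $n^\varepsilon M=\sqrt nM$ beyond $\sqrt nA^{n,\varphi}$; carrying the prefactor $n^{\varepsilon-1}\xi_{i\delta}^n=n^{-1/2}\xi_{i\delta}^n$ from \eqref{eq.M_n_varepsilon}, their accumulated sum over the correction times is $\sqrt nG_{[t/\delta]}^{n,\varphi}$. The two structural facts I would lean on are that the number of correction times $[t/\delta]$ is finite (since $\delta$ is fixed) and that every variance in sight, both $\alpha\beta$ and $\omega_j^n(i\delta-)$, is of order $\alpha=n^{-1/2}\to0$.

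Next I would Taylor expand each Gaussian integral against $\varphi\in C_b^6(\overline{\mathbb R})$. Using $\Gamma_{m,v}(\varphi)=\varphi(m)+\tfrac{v}{2}\varphi''(m)+O\bigl(v^2\|\varphi\|_{4,\infty}\bigr)$, the offspring correction $\Gamma_{X_j^n(i\delta),\alpha\beta}(\varphi)-\varphi(X_j^n(i\delta))$ is $O(\alpha\beta)=O(n^{-1/2})$, while the sampling correction $\varphi(X_j^n(i\delta))-\Gamma_{v_j^n(i\delta-),\omega_j^n(i\delta-)}(\varphi)$ has conditional mean zero given $\mathcal F_{i\delta-}$, because $X_j^n(i\delta)\sim\mathcal N(v_j^n(i\delta-),\omega_j^n(i\delta-))$. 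I would therefore split $\sqrt nG_{[t/\delta]}^{n,\varphi}$ into a conditionally centred (martingale) part and a predictable part, and treat the two separately.

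For the centred part I would compute its conditional quadratic variation exactly as in the proof of \eqref{eq.observation_one}: replacing $\varphi(X_j^n)$ either by a quantity of size $O(\alpha\beta)$ or by the mean-zero Gaussian fluctuation of $\varphi(X_j^n)$ (of conditional variance $O(\omega_j^n)=O(\alpha)$) multiplies the multinomial resampling variance by an extra factor $O(\alpha)=O(n^{-1/2})$. Hence the quadratic variation of the centred part of $\sqrt nG^{n,\varphi}$ is $O(n^{-1/2})\to0$, where I would control the weighted sums using the uniform moment bound \eqref{eq.bound_for_xi_2} on $\xi_{i\delta}^na_j^n$ together with the finiteness of $[t/\delta]$. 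For the predictable part I would invoke Lemma \ref{lem.asymptotics_of_omega} and Remark \ref{rmk.almost_sure_convergence}, whose content is precisely that the Gaussian variances do not contribute asymptotically: the empirical sums $\sum_j\bar a_j^n(i\delta-)\varphi''(v_j^n(i\delta-))$ and their resampled counterparts converge almost surely, forcing the accumulated correction to collapse in the limit. Assembling a bound of the form $\tilde{\mathbb E}\bigl[\,|\sqrt nG_{[t/\delta]}^{n,\varphi}|^{2}\bigr]\le Cn^{-c}$ for some $c>0$, I would then deduce the almost sure statement \eqref{eq.observation_two} by a Borel--Cantelli argument along $n$, or directly from the almost sure convergence of $\rho^n$ and $\pi^n$ in Remark \ref{rmk.almost_sure_convergence}.

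The step I expect to be the main obstacle is the predictable (non-martingale) part generated by the finite offspring variance $\alpha\beta$: it carries a conditional mean of order $\alpha\beta=n^{-1/2}$ at each of the $[t/\delta]$ correction times, so a naive estimate only yields $O(1)$ after the $\sqrt n$ rescaling. Pushing it to $o(1)$ requires a genuine cancellation between the variance inflation introduced at branching and the Gaussian-to-Dirac collapse quantified in Lemma \ref{lem.asymptotics_of_omega}. Making this cancellation rigorous uniformly in the particle index $j$ and across all correction times, while keeping the moment bounds \eqref{eq.bound_for_xi_2} in force, is the crux of the argument.
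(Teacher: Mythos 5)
Your overall strategy is in the right spirit, but there are two concrete problems. First, you have misidentified what $G^{n,\varphi}$ actually is. In the paper's decomposition of $n^\varepsilon M^{n,\varphi}$, the term $G^{n,\varphi}$ is only the conditionally centred sampling fluctuation
$\sqrt nG_{[t/\delta]}^{n,\varphi}=\sum_{i=1}^{[t/\delta]}\sum_{j=1}^n\sqrt n\,\xi_{i\delta}^n\bar a_j^n(i\delta-)\bigl[\varphi(X_j^n(i\delta))-\tilde{\mathbb E}\varphi(X_j^n(i\delta))\bigr]$,
i.e.\ exactly your ``sampling correction'' piece $n\bar a_j^n(i\delta-)\bigl[\varphi(X_j^n(i\delta))-\Gamma_{v_j^n(i\delta-),\omega_j^n(i\delta-)}(\varphi)\bigr]$ and nothing else. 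The offspring-smoothing piece $o_j^{n,i\delta}\bigl[\Gamma_{X_j^n(i\delta),\alpha\beta}(\varphi)-\varphi(X_j^n(i\delta))\bigr]$, which you fold into $G$ and which generates the ``predictable part of order $\alpha\beta$ at each correction time'' that you flag as the crux, is simply not part of $G^{n,\varphi}$; it is accounted for elsewhere in the decomposition. So the cancellation you anticipate having to engineer between the branching variance inflation and the Gaussian-to-Dirac collapse of Lemma \ref{lem.asymptotics_of_omega} does not arise in this lemma: $G^{n,\varphi}$ is a pure (discrete-time) martingale with respect to the correction times, with no drift to cancel.

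Second, and more seriously, your route to the almost sure statement does not close. You propose to establish $\tilde{\mathbb E}\bigl[|\sqrt nG_{[t/\delta]}^{n,\varphi}|^2\bigr]\le Cn^{-c}$ and conclude by Borel--Cantelli, but your own estimate of the quadratic variation of the centred part is $O(n^{-1/2})$; Chebyshev then gives $\tilde{\mathbb P}(|\sqrt nG^{n,\varphi}|>\epsilon)\le C\epsilon^{-2}n^{-1/2}$, which is not summable in $n$, so Borel--Cantelli along the full sequence fails and you only get convergence in probability. The fallback of citing Remark \ref{rmk.almost_sure_convergence} does not help either, since those almost sure convergences carry no $\sqrt n$ rescaling. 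The paper's proof is built precisely to avoid this trap: using the conditional independence of the Gaussian samples $X_j^n(i\delta)$, the smallness $\omega_j^n=O(n^{-1/2})$ of their variances, and the moment bound \eqref{eq.bound_for_xi_2}, it bounds the \emph{twelfth} moment, obtaining $\tilde{\mathbb E}\bigl[(\sqrt nG_{[t/\delta]}^{n,\varphi})^{12}\bigr]\le \beta^T_{\varphi,\sigma,\delta}\,n^{-2}$, which is summable and hence yields the almost sure limit. Raising the moment until the bound becomes summable is the essential device your proposal is missing.
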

\begin{proof}
For $G^{n,\varphi}$, we know that
\begin{align}
\sqrt nG_{[t/\delta]}^{n,\varphi}
=&\sum_{i=1}^{[t/\delta]}\sum_{j=1}^n\sqrt n\xi_{i\delta}^n\bar a_{j}^n(i\delta-)\left[\varphi(X_{j}^n(i\delta))-\tilde{\mathbb{E}}\left(\varphi(X_{j}^n(i\delta))\right)\right],\nonumber
\end{align}
first note that $X_{j}^n(i\delta)\sim
N\left(v_{j}^n(i\delta),\omega_{j}^n(i\delta)\right)$ and $X_{j}^n$s are
mutually independent $(j=1,\ldots,n)$, also not the fact that $\omega\sim\mathcal
O(1/\sqrt n)$; if we let $Z_j^n(i\delta)\triangleq X_{j}^n(i\delta)-\tilde{\mathbb{E}}\left(X_{j}^n(i\delta)\right)$
then $Z_j^n(t)\sim\mathcal N(0,\omega_j^n(t))$, and then by making use of
the central moments of Gaussian random variables, we have
\begin{align}
&\tilde{\mathbb E}\left[\left(\sum_{i=1}^{[t/\delta]}\sum_{j=1}^n\sqrt n\xi_{i\delta}^n\bar
a_{j}^n(i\delta-)\left[\varphi(X_{j}^n(i\delta))-\tilde{\mathbb{E}}\left(\varphi(X_{j}^n(i\delta))\right)\right]\right)^{12}\Bigg|\mathcal
Y_{i\delta-}\right]\nonumber\\
\leq&2\|\varphi'\|_{0,\infty}^{12}\tilde{\mathbb E}\left[\left(\sum_{i=1}^{[t/\delta]}\sum_{j=1}^n\sqrt
n\xi_{i\delta}^n\bar
a_{j}^n(i\delta-)Z_j^n(i\delta)\right)^{12}\Bigg|\mathcal
Y_{i\delta-}\right]\nonumber\\
\leq&C^T\|\varphi\|_{1,\infty}^{12}\|\sigma\|_{0,\infty}^{12}\delta^6n^9\sum_{j=1}^n\left(\xi_{i\delta}^n\bar
a_{j}^n(i\delta-)\right)^{12};\nonumber
\end{align}
then by taking the expectation on both sides, we have
\begin{align}
\tilde{\mathbb E}\left[\left(\sqrt nG_{[t/\delta]}^{n,\varphi}\right)^{12}\right]\leq&C^T\|\varphi\|_{1,\infty}^{12}\|\sigma\|_{0,\infty}^{12}\delta^6n^9\sum_{j=1}^n\tilde{\mathbb
E}\left[\left(\xi_{i\delta}^n\bar a_{j}^n(i\delta-)\right)^{12}\right]\nonumber\\
\leq&C^T\|\varphi\|_{1,\infty}^{12}\|\sigma\|_{0,\infty}^{12}\delta^6n^9\sum_{j=1}^n\sqrt{\tilde{\mathbb
E}\left[(\xi_{i\delta}^n)^{24}\right]\tilde{\mathbb E}\left[\left(\bar a_j^n(i\delta-)\right)^{24}\right]}
\leq\frac{\beta_{\varphi,\sigma,\delta}^T}{n^2},\nonumber
\end{align}
where
$$
\beta_{\varphi,\sigma,\delta}^T=C^T\sqrt{c_1^{T,24}e^{c_{24}T}}\|\varphi\|_{1,\infty}^{12}\|\sigma\|_{0,\infty}^{12}\delta^6
$$
is a constant independent of $n$. Then similar to the proof of Lemma \ref{lem.asymptotics_of_omega}, we have the result.
\end{proof}

\begin{lemma}[\eqref{eq.observation_three}]
Assume the conditions in Proposition \ref{prop.verify_condition2} hold, then
\begin{align}
\lim_{n\rightarrow\infty}\frac{1}{\sqrt n}\sum_{j=1}^n\int_{0}^{t}\xi_{[s/\delta]\delta}^na_j^n(s)R_{s,j}^1(\varphi)ds=\Lambda_t^{R^1,\varphi},
\end{align}
where
\begin{equation}\label{eq.Lambda_R0R1}
\Lambda_t^{R^1,\varphi}=c_\omega\int_0^t\tilde\rho_s^1(\Psi\varphi)ds;
\end{equation}
$c_\omega$ is a constant, and the operator $\Psi$ is defined by
$$\Psi\varphi=\frac{f\varphi'''}{2}+\frac{\sigma\varphi^{(4)}}{4}-\frac{3(A\varphi)''}{2}.$$
\end{lemma}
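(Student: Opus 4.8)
The plan is to substitute the explicit expressions $\omega_j^n(s)=\alpha\bigl(\beta+\int_{i\delta}^s\sigma^2(v_j^n(r))dr\bigr)$ and $\alpha=n^{-1/2}$ into \eqref{eq.R^1} and to exploit the scaling this produces. Every term of $R_{s,j}^1(\varphi)$ carries at least one factor $\omega_j^n(s)=O(n^{-1/2})$, so the prefactor $n^{-1/2}$ in front of the sum combines with it to turn the sum into an empirical average. Writing $\omega_j^n(s)=\alpha\tilde\omega_j^n(s)$ with $\tilde\omega_j^n(s)=\beta+\int_{i\delta}^s\sigma^2(v_j^n(r))dr=O(1)$, one has
$$
\frac{1}{\sqrt n}\sum_{j=1}^n\xi_{[s/\delta]\delta}^na_j^n(s)\,\omega_j^n(s)(\cdots)=\frac1n\sum_{j=1}^n\xi_{[s/\delta]\delta}^na_j^n(s)\,\tilde\omega_j^n(s)(\cdots),
$$
so the natural limiting object is the empirical measure $\tilde\rho_s^{n,1}$ of Lemma \ref{lem.preliminary}. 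First I would discard the terms of $R_{s,j}^1(\varphi)$ carrying an extra factor $(\omega_j^n(s))^2$ or an explicit $\alpha$, since after this rescaling they are $O(n^{-1/2})$ and vanish; this is justified by the uniform moment bound \eqref{eq.bound_for_xi_2} on $\xi^n a_j^n$ together with the boundedness of $f$, $\sigma$ and the derivatives of $\varphi$ under Assumption (A).

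Next I would compute the limits of the integral functionals $I_{4,j}^{(k)}(\varphi)$, $I_{5,j}(\varphi)$ and $I_j(\psi)$ as $\omega_j^n(s)\to0$. In each of them the inner argument $v_j^n(s)+uy\sqrt{\omega_j^n(s)}$ converges to $v_j^n(s)$, so by dominated convergence they tend to a fixed derivative of $\varphi$ (respectively of $A\varphi$) evaluated at $v_j^n(s)$, multiplied by the explicit numerical constants $\int_{\mathbb R}y^{2k}e^{-y^2/2}/\sqrt{2\pi}\,dy$ and the Beta-type integrals $\int_0^1(1-u)^3/6\,du$, and so on. Collecting the surviving contributions and using $A\varphi=f\varphi'+\tfrac12\sigma^2\varphi''$ to rewrite $I_j(A\varphi)$ in terms of derivatives of $\varphi$, the bracket in \eqref{eq.R^1} should reduce, in the limit, to a fixed multiple of $\Psi\varphi=\tfrac12 f\varphi'''+\tfrac14\sigma\varphi^{(4)}-\tfrac32(A\varphi)''$; the accumulated numerical factor together with the residual contribution of $\tilde\omega_j^n(s)$ is what defines the constant $c_\omega$ in \eqref{eq.Lambda_R0R1}.

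Finally I would pass to the limit in the time integral. Having identified the integrand's pointwise-in-$s$ limit as $c_\omega\tilde\rho_s^{n,1}(\Psi\varphi)$, I would invoke the almost-sure convergence $\tilde\rho_s^{n,1}\to\tilde\rho_s^1$ from Lemma \ref{lem.preliminary} (which itself rests on Lemma \ref{lem.asymptotics_of_omega} and Remark \ref{rmk.almost_sure_convergence}), and then exchange the limit with $\int_0^t\cdots ds$ by dominated convergence, the required uniform integrability again coming from \eqref{eq.bound_for_xi_2}. This yields $\lim_n\Lambda_t^{n,R^1,\varphi}=c_\omega\int_0^t\tilde\rho_s^1(\Psi\varphi)ds$, as claimed.

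The hard part will be the bookkeeping in the second step: one must track precisely which terms survive the joint limit $\alpha\to0$, $\omega_j^n(s)\to0$, and verify that the surviving coefficients assemble exactly into $\Psi\varphi$ with a single $j$- and $s$-independent constant $c_\omega$ — in particular controlling the $s$-dependence hidden in $\tilde\omega_j^n(s)$ so that it does not obstruct the factorisation through $\tilde\rho_s^1$. A secondary technical point is the uniform (in $s\in[0,t]$) control of the remainders needed to legitimise the dominated-convergence interchange, which should be handled separately on each inter-resampling interval because of the jumps of $\xi^n$ at the correction times $i\delta$.
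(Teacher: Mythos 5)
Your proposal follows essentially the same route as the paper's (very terse) proof: discard the terms of $R_{s,j}^1(\varphi)$ carrying extra factors of $\alpha$ or $(\omega_j^n)^2$, use $\omega_j^n=\alpha\tilde\omega_j^n$ with $\alpha=n^{-1/2}$ to convert the $n^{-1/2}$-weighted sum into an empirical average against $\tilde\rho_s^{n,1}$, identify the surviving bracket with $\Psi\varphi$ via the $\omega\to0$ limits of the $I$-functionals, and conclude by the almost-sure convergence $\tilde\rho_s^{n,1}\to\tilde\rho_s^1$ of Lemma \ref{lem.preliminary}. You supply considerably more justification than the paper does — in particular you correctly flag the $j$- and $s$-dependence of $\tilde\omega_j^n(s)$ as the obstacle to extracting a single constant $c_\omega$, a point the paper's proof silently glosses over.
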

\begin{proof}
Since
\begin{align}
&\lim_{n\rightarrow\infty}\frac{1}{\sqrt n}\sum_{j=1}^n\int_{0}^{t}\xi_{[s/\delta]\delta}^na_j^n(s)R_{s,j}^1(\varphi)ds\nonumber\\
=&\lim_{n\rightarrow\infty}\frac{1}{\sqrt n}\sum_{j=1}^n\int_{0}^{t}\xi_{[s/\delta]\delta}^na_j^n(s)\Bigg\{\omega_j^n(s)
\left[\left(\frac{f\varphi'''}{2}+\frac{\sigma\varphi^{(4)}}{4}\right)(v_j^n(s))-I_j(A\varphi)\right]\Bigg\}ds\nonumber\\
=&\lim_{n\rightarrow\infty}c_\omega\int_{0}^t\tilde\rho_s^{n,1}(\Psi\varphi)ds=c_\omega\int_0^t\tilde\rho_s^1(\Psi\varphi)ds,\nonumber
\end{align}
we have the required result.
\end{proof}

\begin{lemma}[\eqref{eq.observation_four}]
Assume the conditions in Proposition \ref{prop.verify_condition2} hold, then
\begin{align}
\lim_{n\rightarrow\infty}\left\langle\frac{1}{\sqrt n}\sum_{j=1}^n\int_{0}^{\cdot}\xi_{[s/\delta]\delta}^na_j^n(s)R_{s,j}^2(\varphi)dY_s\right\rangle_t
=\left\langle\Lambda_\cdot^{R^2,\varphi}\right\rangle_t,
\end{align}
where
\begin{equation}
c_\omega\int_{0}^t\left(\tilde\rho_s^1(h\varphi''-(h\varphi)'')\right)dB_s^{(2)},
\end{equation}
$c_\omega$ is a constant and $B^{(2)}$ is a Brownian motion independent of
$Y$.
\end{lemma}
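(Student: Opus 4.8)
The plan is to exploit that, under $\tilde{\mathbb P}$, the observation $Y$ is a Brownian motion, so the predictable quadratic variation of the It\^o integral is simply the time integral of the squared integrand:
$$
\left\langle\frac{1}{\sqrt n}\sum_{j=1}^n\int_0^\cdot\xi_{[s/\delta]\delta}^na_j^n(s)R_{s,j}^2(\varphi)dY_s\right\rangle_t=\int_0^t\left(\frac{1}{\sqrt n}\sum_{j=1}^n\xi_{[s/\delta]\delta}^na_j^n(s)R_{s,j}^2(\varphi)\right)^2ds.
$$
It therefore suffices to identify the pointwise-in-$s$ limit of the integrand $H_s^n\triangleq\frac{1}{\sqrt n}\sum_{j=1}^n\xi_{[s/\delta]\delta}^na_j^n(s)R_{s,j}^2(\varphi)$ and then to pass the limit through the time integral.

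First I would expand $R_{s,j}^2(\varphi)$ as given in \eqref{eq.R^2}. Since $\omega_j^n(s)\to0$ as $n\to\infty$, inside $I_j(h\varphi)$ one may replace the integrand by its value at $v_j^n(s)$; using $\int_{\mathbb R}\frac{y^2e^{-y^2/2}}{\sqrt{2\pi}}dy=1$ and $\int_0^1(1-u)du=\frac12$ gives $I_j(h\varphi)\to\frac12(h\varphi)''(v_j^n(s))$, so that the leading part of $R_{s,j}^2(\varphi)$ is $\frac{\omega_j^n(s)}{2}\big(h\varphi''-(h\varphi)''\big)(v_j^n(s))$. The remaining $(\omega_j^n(s))^2$ term is of order $\alpha^2=1/n$ and, after the $\frac{1}{\sqrt n}\sum_j$ scaling and the control afforded by \eqref{eq.bound_for_xi_2}, vanishes in the limit. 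Writing $\omega_j^n(s)=\alpha\beta+O(\alpha\delta)$ with $\alpha=n^{-1/2}$, the crucial scaling $\alpha\sqrt n=1$ then converts the empirical sum into the measure $\tilde\rho_s^{n,1}$ of Lemma \ref{lem.preliminary}:
$$
H_s^n=\frac{\alpha\beta}{2}\sqrt n\,\tilde\rho_s^{n,1}\big(h\varphi''-(h\varphi)''\big)+o(1)=c_\omega\,\tilde\rho_s^{n,1}\big(h\varphi''-(h\varphi)''\big)+o(1),\qquad c_\omega=\frac{\beta}{2}.
$$
By Lemma \ref{lem.preliminary}, $\tilde\rho_s^{n,1}\to\tilde\rho_s^1$ almost surely, hence $H_s^n\to c_\omega\,\tilde\rho_s^1\big(h\varphi''-(h\varphi)''\big)$.

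I would then square and integrate: combining the pointwise a.s. convergence of $H_s^n$ with a uniform-in-$n$ domination of $(H_s^n)^2$ — obtained from \eqref{eq.bound_for_xi_2} and the boundedness of the $\varphi$- and $h$-dependent factors under Assumption (A) — dominated convergence gives
$$
\int_0^t(H_s^n)^2ds\longrightarrow c_\omega^2\int_0^t\left(\tilde\rho_s^1\big(h\varphi''-(h\varphi)''\big)\right)^2ds.
$$
Since $B^{(2)}$ is a standard Brownian motion, the process $\Lambda_t^{R^2,\varphi}$ in \eqref{eq.Lambda_R2} has precisely this predictable quadratic variation, which matches the two brackets and yields the claim.

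The main obstacle is the rigorous interchange of the limit $n\to\infty$ with the time integral. This demands (i) a dominating bound for $(H_s^n)^2$ that is integrable in $s$ uniformly in $n$, for which the second-moment estimate \eqref{eq.bound_for_xi_2} on $\xi_{i\delta}^na_j^n(s)$ and the uniform bound $|R_{s,j}^2(\varphi)|\le C\alpha\delta\|\varphi\|_{4,\infty}$ are the key inputs; and (ii) control, uniform in $j$ and $s$, of the Taylor remainder hidden in $I_j(h\varphi)$ and of the $(\omega_j^n(s))^2$ contribution, so that these genuinely disappear after summation and rescaling. It is exactly here that the bounded-derivative hypotheses of Assumption (A) and the smoothness $\varphi\in C_b^6(\overline{\mathbb R})$ are essential.
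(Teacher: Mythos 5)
Your proposal follows essentially the same route as the paper's own proof: compute the predictable quadratic variation as the time integral of the squared integrand (using that $Y$ is a $\tilde{\mathbb P}$-Brownian motion), reduce $R_{s,j}^2(\varphi)$ to its leading term $\tfrac{1}{2}\omega_j^n(s)\bigl(h\varphi''-(h\varphi)''\bigr)(v_j^n(s))$ via the Taylor identification $I_j(h\varphi)\approx\tfrac12(h\varphi)''(v_j^n(s))$, use the scaling $\sqrt n\,\omega_j^n(s)\to$ const to recognise $c_\omega\tilde\rho_s^{n,1}$, and conclude by Lemma \ref{lem.preliminary} together with a domination argument. Your write-up is in fact more explicit than the paper's (which compresses all of this into a three-line display), and the extra care you take with the $(\omega_j^n)^2$ remainder and the interchange of limit and integral is consistent with, not divergent from, the intended argument.
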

\begin{proof}
Observe that
\begin{align}
&\lim_{n\rightarrow\infty}\left\langle\int_{0}^{\cdot}\frac{1}{\sqrt n}\sum_{j=1}^n\xi_{[s/\delta]\delta}^na_j^n(s)R_{s,j}^2(\varphi)dY_s\right\rangle_t\nonumber\\
=&\lim_{n\rightarrow\infty}\int_{0}^{t}\left(\frac{1}{2\sqrt n}\sum_{j=1}^n\xi_{[s/\delta]\delta}^na_j^n(s)\omega_j^n(s)\left[(h\varphi''-(h\varphi)'')(v_j^n(s))\right]\right)^2ds\nonumber\\
=&\lim_{n\rightarrow\infty}c_\omega^2\int_{0}^{t}\left(\tilde\rho_s^{n,1}\left(h\varphi''-(h\varphi)''\right)\right)^2ds
=c_\omega^2\int_{0}^{t}\left(\tilde\rho_s^{1}\left(h\varphi''-(h\varphi)''\right)\right)^2ds=\left\langle\Lambda_\cdot^{R^2,\varphi}\right\rangle_t;\nonumber
\end{align}
and then we have the result.
\end{proof}

\begin{lemma}
Assume the conditions in Proposition \ref{prop.verify_condition2} hold, then
\begin{align}
\lim_{n\rightarrow\infty}\left\langle\frac{1}{\sqrt n}\sum_{j=1}^n\int_{0}^{\cdot}\xi_{[s/\delta]\delta}^na_j^n(s)R_{s,j}^3(\varphi)dV_s^{(j)}\right\rangle_t=\left\langle\Lambda_\cdot^{R^3,\varphi}\right\rangle_t,
\end{align}
where 
$$
\Lambda_t^{R^3,\varphi}=\int_0^t\sqrt{\tilde\rho_s^2\left((\sigma\varphi')^2\right)}dB_s^{(3)},
$$
$B^{(3)}$ is a Brownian motion independent of $B^{(2)}$ and $Y$.
\end{lemma}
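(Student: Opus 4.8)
The plan is to compute the predictable quadratic variation of the martingale
$$M_t^n\triangleq\frac{1}{\sqrt n}\sum_{j=1}^n\int_{0}^{t}\xi_{[s/\delta]\delta}^na_j^n(s)R_{s,j}^3(\varphi)dV_s^{(j)}$$
directly and then pass to the limit using Lemma \ref{lem.preliminary}. The decisive structural fact is that the driving noises $\{V^{(j)}\}_{j=1}^n$ are mutually independent and independent of $Y$; consequently the cross-variations $\langle V^{(i)},V^{(j)}\rangle$ vanish for $i\neq j$, and the quadratic variation of the sum reduces to the \emph{sum} of the individual quadratic variations (in contrast to the $dY_s$ integrals, which are driven by a single Brownian motion and produce the square of a sum). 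First I would write
$$\left\langle M_\cdot^n\right\rangle_t=\frac{1}{n}\sum_{j=1}^n\int_{0}^{t}\left(\xi_{[s/\delta]\delta}^na_j^n(s)\right)^2\left(R_{s,j}^3(\varphi)\right)^2\,ds,$$
so that everything is reduced to understanding the integrand $\left(\xi_{[s/\delta]\delta}^na_j^n(s)\right)^2\left(R_{s,j}^3(\varphi)\right)^2$ as $n\to\infty$.

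Next I would expand $R_{s,j}^3(\varphi)$ via its definition \eqref{eq.R^3} and isolate the leading contribution. Since $\alpha=n^{-1/2}$ we have $\sqrt{1-\alpha}\to1$, and since $\omega_j^n(s)$ is of order $\alpha=O(n^{-1/2})$, the two terms in \eqref{eq.R^3} carrying factors $\omega_j^n(s)$ and $(\omega_j^n(s))^2$ are negligible; the boundedness of $\sigma$ and of the derivatives of $\varphi$ (Assumption (A) together with $\varphi\in C_b^6$) guarantees that $I_{4,j}^{(5)}(\varphi)$ is bounded uniformly in $j$ and $n$. Hence
$$\left(R_{s,j}^3(\varphi)\right)^2=(\sigma\varphi')^2(v_j^n(s))+E_{s,j}^n,\qquad \sup_{j,s}\left|E_{s,j}^n\right|\leq Cn^{-1/2},$$
where $C$ depends only on $T$, on $\|\sigma\|_{0,\infty}$ and on $\|\varphi\|_{6,\infty}$. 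Substituting the leading term and recalling the definition of $\tilde\rho^{n,2}$ from Lemma \ref{lem.preliminary}, the principal part of $\langle M_\cdot^n\rangle_t$ is exactly $\int_0^t\tilde\rho_s^{n,2}((\sigma\varphi')^2)\,ds$.

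It then remains to dispatch the remainder and to pass to the limit. The error contribution is bounded in absolute value by
$$Cn^{-1/2}\int_0^t\frac{1}{n}\sum_{j=1}^n\left(\xi_{[s/\delta]\delta}^na_j^n(s)\right)^2ds=Cn^{-1/2}\int_0^t\tilde\rho_s^{n,2}(\mathbf1)\,ds,$$
and since $\tilde\rho_s^{n,2}(\mathbf1)\to\tilde\rho_s^2(\mathbf1)$ by Lemma \ref{lem.preliminary} (the integral staying bounded, using also the moment control \eqref{eq.bound_for_xi_2}), the whole remainder vanishes as $n\to\infty$. Applying Lemma \ref{lem.preliminary} once more to the principal part gives $\int_0^t\tilde\rho_s^{n,2}((\sigma\varphi')^2)\,ds\to\int_0^t\tilde\rho_s^{2}((\sigma\varphi')^2)\,ds$, $\tilde{\mathbb P}$-a.s. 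Finally, computing the quadratic variation of the target $\Lambda_t^{R^3,\varphi}=\int_0^t\sqrt{\tilde\rho_s^2((\sigma\varphi')^2)}\,dB_s^{(3)}$ yields $\langle\Lambda_\cdot^{R^3,\varphi}\rangle_t=\int_0^t\tilde\rho_s^2((\sigma\varphi')^2)\,ds$, which matches the limit and closes the argument. I expect the main obstacle to be the uniform control of the remainder $E_{s,j}^n$: one must verify that the cross terms between the leading factor $(\sigma\varphi')(v_j^n(s))$ and the $\omega_j^n$-order corrections, after being weighted by $(\xi^n a_j^n)^2$ and averaged, are genuinely $o(1)$; this is precisely where the moment bound \eqref{eq.bound_for_xi_2} on $\xi_{i\delta}^na_j^n$ and the boundedness of the Gaussian moment integrals $I_{4,j}^{(5)}$ are indispensable.
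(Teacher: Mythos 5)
Your proposal is correct and follows essentially the same route as the paper's proof: the paper likewise reduces the bracket to $\int_0^t\tilde\rho_s^{n,2}\left((\sigma\varphi')^2\right)ds$ by noting $\omega_j^n\propto n^{-1/2}$ (deferring the computation to the thesis \cite{Li}) and then invokes the a.s.\ convergence $\tilde\rho^{n,2}\rightarrow\tilde\rho^2$ of Lemma \ref{lem.preliminary}. You simply make explicit the two steps the paper leaves implicit — the independence of the $V^{(j)}$ turning the bracket into a sum of individual brackets, and the uniform $O(n^{-1/2})$ control of the $\omega_j^n$-order corrections via \eqref{eq.bound_for_xi_2}.
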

\begin{proof}
Note that $\omega_j^n\propto\frac{1}{\sqrt n}$, then by
the same
approach as in the proof of Lemma \ref{lem.asymptotics_of_omega}
in \cite{Li}, we have
\begin{align}
&\lim_{n\rightarrow\infty}\left\langle\frac{1}{\sqrt n}\sum_{j=1}^n\int_{0}^{\cdot}\xi_{[s/\delta]\delta}^na_j^n(s)R_{s,j}^3(\varphi)dV_s^{(j)}\right\rangle_t\nonumber\\
=&\lim_{n\rightarrow\infty}\int_0^t\tilde\rho_s^{n,2}\left((\sigma\varphi')^2\right)ds=\int_0^t\tilde\rho_s^2\left((\sigma\varphi')^2\right)ds=\left\langle\Lambda_\cdot^{R^3,\varphi}\right\rangle_t.
\end{align}
We then have the result.
\end{proof}


\addcontentsline{toc}{chapter}{Bibliography}


\end{document}